\def\iid{\textrm{i.i.d.~}} 
\DeclareMathOperator*{\argmin}{argmin}
\newcommand{\R}{\mathbb{R}}
\newcommand{\Var}{\operatorname{\textnormal{Var}}}
\newcommand{\Cov}{\operatorname{\textnormal{Cov}}}
\newcommand{\goto}{\rightarrow}
\newcommand{\sgn}{\mathop\mathrm{sgn}}
\newcommand{\F}{\mathcal{F}}
\renewcommand{\P}{\operatorname{\mathbb{P}}}
\newcommand{\E}{\operatorname{\mathbb{E}}}
\renewcommand\d{\operatorname{\mathrm{d}}}
\newcommand{\bX}{\bm{X}}
\numberwithin{equation}{section}
\newtheorem{theorem}{Theorem}
\newtheorem{othertheorem}{othertheorem}[section]
\newtheorem{lemma}[othertheorem]{Lemma}
\newtheorem{corollary}[othertheorem]{Corollary}
\newtheorem{proposition}[othertheorem]{Proposition}
\theoremstyle{definition}
\newtheorem{definition}[othertheorem]{Definition}
\theoremstyle{remark}
\newtheorem{remark}[othertheorem]{Remark}
\theoremstyle{assumption}
\theoremstyle{definition}
\newcommand{\ko}{\widetilde{\bm{X}}}
\newcommand{\that}{\hat{t}}
\newcommand{\calH}{\mathcal{H}}
\newcommand{\overbar}[1]{\mkern 1.5mu\overline{\mkern-1.5mu#1\mkern-1.5mu}\mkern 1.5mu}
\newcommand{\tpp}{\textnormal{TPP}}
\newcommand{\tppinfty}{\textnormal{tpp}}
\newcommand{\alphacv}{\alpha_{\textnormal{\scriptsize cv}}}
\newcommand{\taucv}{\tau_{\textnormal{\scriptsize cv}}}
\newcommand{\fdp}{\textnormal{FDP}}
\newcommand{\RR}{\mathbb{R}}
\newcommand{\EE}{\mathbb{E}}
\newcommand{\PP}{\mathbb{P}}
\newcommand{\fdpinfty}{\textnormal{fdp}}
\newcommand{\fdr}{\textnormal{FDR}}
\newcommand{\fdphat}{\widehat{\textnormal{FDP}}}
\newcommand{\lambdacv}{\hat{\lambda}_{\textnormal{\scriptsize cv}}}
\newcommand{\lambdacvinfty}{\lambda_{\textnormal{\scriptsize cv}}}
\newcommand{\fsp}{\textnormal{FSP}}
\newcommand{\tsp}{\textnormal{TSP}}
\renewcommand{\vec}[1]{\mathrm{#1}}
\title{A Power Analysis for Model-X Knockoffs with $\ell_{p}$-Regularized Statistics}
\author{Asaf Weinstein \thanks{School of Computer Science and Engineering, Hebrew University of Jerusalem, Israel} \and  Weijie J.~Su \thanks{Department of Statistics and Data Science, University of Pennsylvania} \and Ma{\l}gorzata Bogdan \thanks{Department of Mathematics, University of Wroclaw, Poland; Department of Statistics, Lund University, Sweden}\and Rina F.~Barber \thanks{Department of Statistics, University of Chicago} \and Emmanuel J.~Cand\`es \thanks{Department of Statistics and Department of Mathematics, Stanford University}}
\date{}
\begin{document}
\maketitle

\vspace{-.75cm}

\begin{abstract}
Variable selection properties of procedures utilizing penalized-likelihood estimates is a central topic in the study of high dimensional linear regression problems. Existing literature emphasizes the quality of ranking of the variables by such procedures as reflected in the receiver operating characteristic curve or in prediction performance. Specifically, recent works have harnessed modern theory of approximate message-passing (AMP) to obtain, in a particular setting, exact asymptotic predictions of the type I–type II error tradeoff for selection procedures that rely on $\ell_{p}$-regularized estimators. 

\smallskip
In practice, effective ranking by itself is often not sufficient because some calibration for Type I error is required. In this work we study theoretically the power of selection procedures that similarly rank the features by the size of an $\ell_{p}$-regularized estimator, but further use Model-X knockoffs to control the false discovery rate in the realistic situation where no prior information about the signal is available. In analyzing the power of the resulting procedure, we extend existing results in AMP theory to handle the pairing between original variables and their knockoffs. This is used to derive exact asymptotic predictions for power. We apply the general results to compare the power of  the knockoffs versions of Lasso and thresholded-Lasso selection, and demonstrate that in the i.i.d.~covariate setting under consideration, tuning by cross-validation on the augmented design matrix is nearly optimal. 
We further demonstrate how the techniques allow to analyze also the Type S error, and a corresponding notion of power, when selections are supplemented with a decision on the sign of the coefficient. 
\end{abstract}

\section{Introduction}\label{sec:intro}
Suppose that we observe a matrix $\bX\in \R^{n\times p}$ of the measurements of $p$ predictor variables on each of $n$ subjects, and a response vector $Y\in \R^n$, and assume that
\begin{equation}\label{eq:model-linear}
Y = \bX\beta + \xi,\ \ \ \ \ \ \ \ \ \ \xi \sim \mathcal{N}_n(0,\sigma^2 \boldsymbol{I}),
\end{equation}
where $\beta = (\beta_1,...,\beta_p)^\top$ and $\sigma^2$ are unknown. 
In many modern applications where the linear model is appropriate, $p$ is large and we may have a reason to believe a priori that $\beta_j$ is small in magnitude for most $j= 1,...,p$. 
For example, in genetics $X_{ij}$ might encode the state (presence or absence) of a specific genetic variant $j$ for individual $i$, and $Y_i$ measures a quantitative trait of interest. 
Typical cases entail the number $p$ of genetic variants in the millions but, for all we know about this kind of problems, only a small number of them may have significant explanatory power. 
Finding mutations which are in that sense important among the $p$ candidates, is key to investigating the causal mechanism regulating the trait. 

\smallskip
Following recent literature \cite[for example]{barber2015controlling,candes2018panning}, here we 
%
treat the problem formally as a multiple hypothesis testing problem with respect to the model \eqref{eq:model-linear}, where the null hypotheses to be tested are
$$
H_{0j}: \ \beta_j = 0,\ \ \ \ \ \ \ j\in \mathcal{H}\equiv \{1,...,p\}.
$$
Denote by $\mathcal{H}_0 \equiv \{j: \beta_j = 0\}$ the (unknown) subset of nulls, and denote by $\mathcal{S} \equiv \mathcal{H}\setminus \mathcal{H}_0$ the subset of nonnulls. 
In general, a multiple testing procedure uses the data to output an estimate $\widehat{\mathcal{S}}\subseteq \{1,...,p\}$ of $\mathcal{S}$. 
For any such procedure we define the {\it false discovery proportion} and the {\it true positive proportion} as
\begin{equation*}
\fdp \equiv \frac{|\widehat{\mathcal{S}}\cap
  \mathcal{H}_0|}{|\widehat{\mathcal{S}}|} \ \ \text{ and } \ \ \tpp \equiv \frac{|\widehat{\mathcal{S}}\cap \mathcal{S}|}{|\mathcal{S}|},
\end{equation*}
respectively, with the convention $0/0\equiv 0$. 
A good testing procedure is one for which TPP is large and FDP is small, meaning that the test is able to separate nonnulls from nulls. 
We will later be concerned with the concrete problem of controlling the {\it false discovery rate}, 
\begin{equation*}
\fdr \equiv \E[\fdp],
\end{equation*}
below a prespecified level, and we say that a test is {\it valid at level $q$} if $\fdr\leq q$ for all $\beta$. 
Note that, per definition, any variable selection procedure qualifies as a testing procedure and vice versa, and we will use the two terms interchangeably. 

\subsection{Selecting variables by thresholding regularized estimators}\label{subsec:bridge}
With a growing interest in high-dimensional (large $p$) settings, considerable attention has been given over the past two decades to variable selection procedures relying on the Lasso program, 
\begin{equation}\label{eq:lasso}
\underset{b\in \mathbb{R}^p}{\text{minimize}}\ \frac{1}{2}\|Y-\bX \vec{b}\|_2^2 + \lambda\|\vec{b}\|_1. 
\end{equation}
The Lasso is appealing because it is relatively easy to solve and at the same time the solution to \eqref{eq:lasso} tends to be sparse. 
Thus, for any $\lambda>0$, if $\widehat{\beta}(\lambda)$ denotes the solution to \eqref{eq:lasso}, variable selection is readily elicited by associating with $\widehat{\beta}(\lambda)$ the subset
\begin{equation}\label{eq:lasso-var-selection}
\widehat{\mathcal{S}}\equiv \{j:\widehat{\beta}_j(\lambda)\neq 0\}, 
\end{equation}
which will be referred to as {\it Lasso selection} for the rest of this paper. 
Many works have studied the properties of Lasso selection, mostly establishing conditions on $\bX$ and $\beta$ for selection consistency, $\mathbb{P}(\widehat{\mathcal{S}}=\mathcal{S}) \to 1$, e.g. \cite{meinshausenhigh, zhao,zou,  wainwright2009, buhlmannlivre, dossal}. 
Such conditions turn out to be generally very stringent even in the noiseless case, $\sigma^2=0$; in other words, the fundamental phenomenon is not a matter of insufficient signal-to-noise ratio. 
While the conditions for \eqref{eq:lasso-var-selection} to recover a {\it superset} of the true support $\mathcal{S}$, also referred to as {\it screening}, are considerably less restrictive, it tends to select too many null variables (see, e.g., \cite{yezhang:10, buhlmannlivre, zhou2009thresholding}). 

\smallskip
This rather discouraging fact has motivated practitioners and theoreticians alike to consider as an alternative the procedure that takes into account the {\it magnitude} of the estimate by setting
\begin{equation}\label{eq:thresh-lasso}
\widehat{\mathcal{S}} \equiv \{j: |\widehat{\beta}_j(\lambda)| > t\}, 
\end{equation}
for some threshold $t>0$ \cite{zhou2009thresholding, yezhang:10, van2011adaptive, geer:11, TardivelBogdan2018}, to which we refer from now on as {\it thresholded-Lasso selection}. 
Even more generally, one may consider, as in \cite{wang2017bridge}, replacing the Lasso estimator $\widehat{\beta}(\lambda)$ in \eqref{eq:thresh-lasso} with some {\it bridge} estimator, 
\begin{equation}\label{eq:beta-bridge}
\widehat{\beta}(\gamma, \lambda) = \underset{b\in \mathbb{R}^p}{\text{minimize}}\ \frac{1}{2}\|Y-\bX \vec{b}\|_2^2 + \lambda\|\vec{b}\|_\gamma^\gamma, \ \ \ \ \ \gamma\geq 1,
\end{equation}
where $\|\vec{b}\|_\gamma^\gamma\equiv \sum|\beta_j|^{\gamma}$, and we use the symbol $\gamma$ from here on instead of the more standard notation $p$ (as in the title) because $p$ is already taken (denotes the number of columns in $\bX$). 
The optimization problem \eqref{eq:beta-bridge} retains computational convenience because it is still convex, and at the same time produces a richer family of {\it thresholded-bridge} selection procedures,
\begin{equation}\label{eq:thresh-bridge}
\widehat{\mathcal{S}} \equiv \{j: |\widehat{\beta}_j(\gamma,\lambda)| > t\},
\end{equation}
for some $\gamma\geq 1$ and a threshold $t>0$. 
In \cite{wang2017bridge} the above selection procedure is referred to as a {\it two-stage} variable selection technique, separating the {\it ranking} by the absolute value of the regularized regression estimator, and the {\it thresholding} at $t>0$. 

\medskip
In principle, the parametric curve that associates the expectations of FDP and TPP with every $\lambda>0$ for \eqref{eq:lasso-var-selection}, and with every $t> 0$ for \eqref{eq:thresh-bridge}, could be used to measure the quality of ranking for Lasso selection or for thresholded-bridge selection with different choices of $\gamma>1$. 
For fixed $n$ and $p$, however, there are no tractable forms for $\widehat{\beta}_j(\gamma,\lambda)$ in general (the special case $\gamma=2$ is an exception), and the expected FDP and TPP are also intractable functions of $\beta$ and $\sigma^2$. 

\smallskip
Remarkably, in a certain asymptotic regime and under some further modelling assumptions, it is possible to calculate the {\it limits} of FDP and TPP for \eqref{eq:lasso-var-selection} at any fixed $\lambda>0$, and for \eqref{eq:thresh-bridge} at any fixed $\gamma\geq 1$ and $t> 0$. 
More specifically, in a special case where $\bX$ has i.i.d.\ Gaussian entries, $p$ and $n$ grow comparably, and the sparsity is {\it linear}, $|\mathcal{S}| \approx \epsilon p$, 
\cite{SLOPE1} leveraged major advances from \cite{bayati2011dynamics, bayati2012} to first obtain {\it exact asymptotic predictions} of FDP and TPP for Lasso selection. 
In \cite{su2017false} a fundamental quantitative tradeoff between FDP and TPP for Lasso, valid uniformly in $\lambda$, was presented by extending the aforementioned results. 
Recently, \cite{wang2017bridge} obtained predictions of FDP and TPP for thresholded-bridge selection with any $\gamma\geq 1$, which covers in particular thresholded-Lasso selection. 


\smallskip
The main purpose in \cite{wang2017bridge} is to analyze the power corresponding to different choices of $\gamma\geq 1$ in \eqref{eq:thresh-bridge}, and compare them in different regimes of the signal. 
In particular, while the results of \cite{su2017false} imply that Lasso cannot achieve exact support recovery in this asymptotic setting, \cite{wang2017bridge} show that using thresholded-Lasso can improve dramatically the separation between null and nonnulls if $\lambda$ is chosen appropriately. 
This provides rigorous confirmation for the advantages of thresholded-Lasso, which have long been noticed by practitioners. 
Also, the analysis in \cite{wang2017bridge} reinforces the results of \cite{TardivelBogdan2018}, which imply that in the same asymptotic setting, 
thresholded-Lasso indeed achieves exact support recovery if the signal-to-noise ratio is high and the limiting signal sparsity is below the transition curve of \cite{DonTan05}. 

\subsection{A ``vertical" look at the Lasso path}\label{subsec:vertical}
Before proceeding to describe the main focus of this paper, we take a moment to reflect on the basic differences between Lasso selection and thresholded-Lasso selection that account for the potential power increase reported in, e.g., \cite{wang2017bridge}. 
At first glance, the two selection rules might not appear that different, because \eqref{eq:lasso-var-selection} is just \eqref{eq:thresh-lasso} with $t=0$. 
There is, however, a fundamental difference between Lasso and thresholded-Lasso. 
To illustrate this, we simulated data from the model with $n=100$, $p=200$, $\sigma=1$, and the coefficients are all zero except for $\beta_1=\ldots=\beta_{20}=10$.
Figure \ref{fig:paths} tracks the absolute value of the Lasso estimates $\widehat{\beta}_j(\lambda)$ as a function of $\lambda$, for null coefficients and for nonnull coefficients. 
In Lasso selection variables are collected in the order they become active, $\widehat{\beta}_j(\lambda)\neq 0$, as $\lambda$ decreases; pictorially, this corresponds to looking at the selection path ``horizontally" along the $\lambda$ axis. 
We can see that false discoveries occur early on the Lasso path (as studied and confirmed in \cite{su2017false}). 
Consequently, \eqref{eq:lasso-var-selection} cannot keep FDP small unless $\lambda$ is chosen large, which inevitably affects the power: in this example the maximum TPP for \eqref{eq:lasso-var-selection} subject to FDP$\leq 0.1$ is 0.45. 

\smallskip
Nevertheless, it is also evident from the figure that the estimates corresponding to true signals maintain significantly larger size than most of the estimates for nulls, as $\lambda$ decreases. 
This suggests that better separation between null and nonnulls can be achieved by looking further down the path (smaller $\lambda$) and ordering the variables according to the {\it magnitude} of the corresponding estimates; pictorially, this corresponds to looking at the selection path ``vertically", as represented by the dashed line at $\lambda=1.05$. 
The additional flexibility in varying the threshold allows \eqref{eq:thresh-lasso} to take advantage of this: 
basically, $\lambda$ can be chosen freely, while setting $t$ appropriately large will ensure small FDP (by killing small estimates corresponding to null coefficients). 
The potential advantage is demonstrated in the figure by the broken line, indicating the 10-fold cross-validation estimate of $\lambda$. 
At this value of $\lambda$, for example, thresholded-Lasso has TPP equal to 0.95 when $t$ is selected such that FDP $\leq 0.1$. 

\begin{figure}
  \centering
\includegraphics[scale=.6]{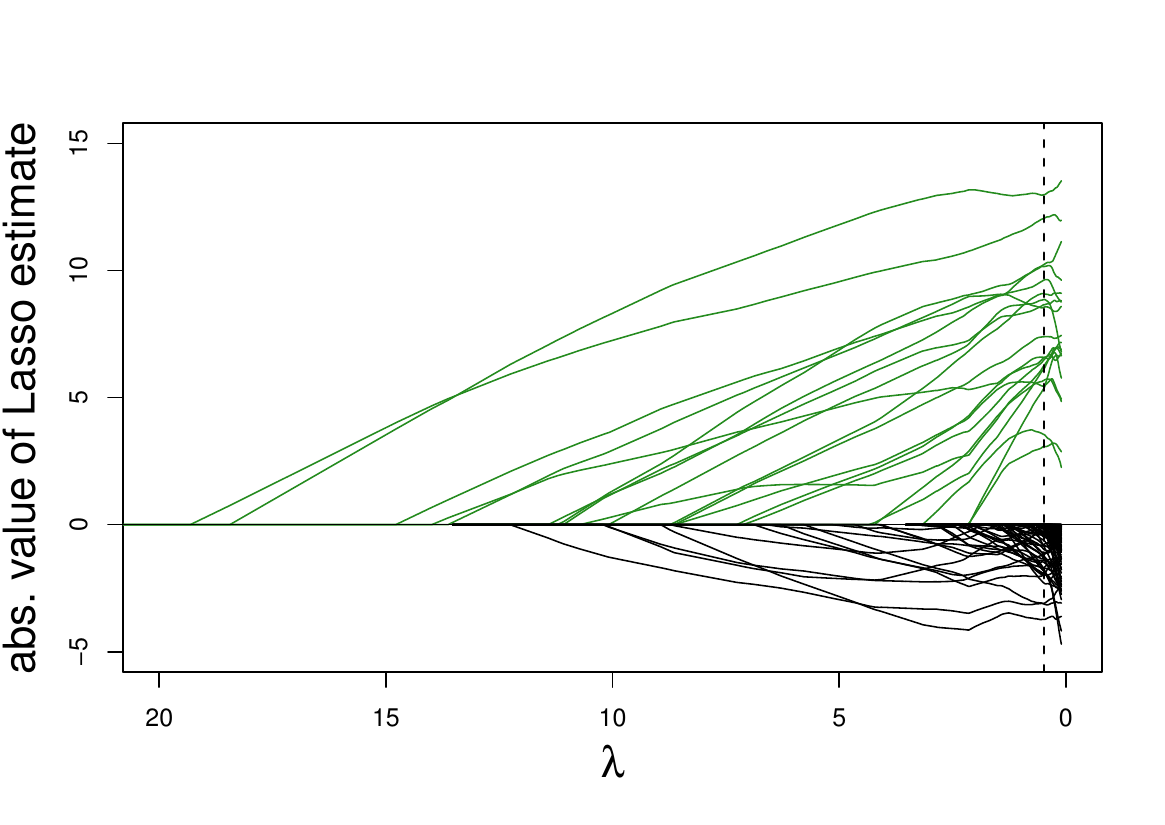}
\caption{
Lasso path for a simulated example. 
For convenience of presentation, the y-axis shows the absolute value of the lasso coefficient estimate for nonnull variables, and the negative of the absolute value for null variables. 
Vertical dashed line corresponds to $\lambda=1.05$.}
\label{fig:paths}
\end{figure}

\subsection{Calibration for Type I error}\label{subsec:controlled}

The works of \cite{su2017false} and \cite{wang2017bridge} are important because they facilitate a sharp theoretical comparison between Lasso selection and the thresholding  selection procedures \eqref{eq:thresh-bridge}. 
In practice, however, the implications are limited: the analysis in these works will yield the achievable asymptotic FDP for a prescribed asymptotic TPP level at any given $\lambda$ for \eqref{eq:lasso-var-selection}, and at any given $t$ for \eqref{eq:thresh-lasso}, {\it provided that $\sigma$ and the empirical distribution of the true coefficients $\beta_j$ are  known}. 
In reality, such {\it a priori} knowledge about the signal and the noise level is rarely available, and the FDP needs to be estimated instead. 
This motivated \cite{weinstein2017power} to study a knockoffs-augmented setup and obtain an operable counterpart to the ``oracle" FDP-TPP curve of \cite{su2017false} for Lasso selection. 
By ``operable" we mean that the power predictions of \cite{weinstein2017power} apply to a procedure that provably controls the FDR for fixed $n,p$ without any knowledge about $\beta$ or $\sigma$. 

\smallskip
Seeking to increase power while maintaining type I error control, in the present article we obtain an operable analog to the FDP and TPP predictions of \cite{wang2017bridge} for the thresholding selection procedures \eqref{eq:thresh-bridge}, with special attention given to {\it thresholded-Lasso} selection. 
As in \cite{weinstein2017power}, we employ knockoffs to allow for FDR calibration, observing that the augmented setup can still be studied within the same AMP framework. 
However, there is a crucial point of departure between our work and \cite{weinstein2017power} also in the {\it type} of knockoffs used: while the construction of \cite{weinstein2017power}, reviewed briefly in Section \ref{subsubsec:counting} and referred to as ``counting" knockoffs in the sequel, is valid only when the entries of $\bX$ are i.i.d., here we use the more general prescription of Model-X knockoffs 
from \cite{candes2018panning}. 
The counting knockoffs scheme studied in \cite{weinstein2017power} is something that the analyst would only implement if it were known that the covariates were i.i.d., as it controls FDR only in this limited setting. 
In contrast, the model-X knockoffs procedure is something that is widely used across a broad range of regimes, and has valid FDR control far beyond the i.i.d.~design setting. 
While our power analysis for this method is, at present, restricted to the i.i.d.~setting, the results in the current paper are far more useful since the analysis accommodates a much more general and broadly used kncokoff scheme (and the power analysis can hopefully be extended beyond the i.i.d.~setting in future work). 

\smallskip
To further justify studying Model-X knockoffs for i.i.d.~covariates, it is important to emphasize that---perhaps not obviously so---the i.i.d.~setting is very different from the orthogonal setting: for example, the discussions in \cite[Section 3.2.1]{SLOPE} and in \cite[Section 3]{su2017false} regarding Lasso, imply that due to shrinkage, even small sample correlations between the realized columns of $\bX$ generate additional ``noise'' as an artifact, which increases the variance of the estimates. 
This makes the analysis quite different, and more involved, as compared to the orthogonal $\bX$ case. 
Specifically, the level of this noise increases with the ratio $p/n$ and depends non-monotonically on the tuning parameter $\lambda$, see also Figures \ref{fig:simulation} and \ref{fig:cv} below. 
That explains, informally, why choosing an appropriate value of $\lambda$ (i.e., a value that makes the power large) is far from trivial already in the i.i.d.~covariate case under consideration here. 

\smallskip
Regarding the comparison with \cite{weinstein2017power}, besides the main difference mentioned above in the type of knockoffs, it is worth emphasizing that by implementing Model-X knockoffs, we also obviate the problem of estimating the proportion of nonnulls (i.e., the sparsity), which was a nontrivial issue to handle with counting knockoffs and involved an extra tuning parameter. 
Table \ref{tab:context} indicates where our work fits in the context of existing literature analyzing variable selection with bridge-penalized statistics in the AMP framework.

\subsection{Our contribution}\label{subsec:contribution}

As implied in the previous subsection, the thrust of this article is to develop mathematical tools enabling exact power analysis of Model-X knockoffs procedures, and to study consequences of the resulting analysis. 
We summarize below our main results. 

\begin{enumerate}[label=\textbf{\Roman*}., wide, labelwidth=!, labelindent=0pt]

\item \textbf{An extension of AMP theory}. 
In the Model-X knockoffs framework the statistic used for ranking the variables will involve both $\widehat{\beta}_j(\gamma,\lambda)$ and its knockoff counterpart. 
Therefore, we need to study aspects of their joint distribution, rather than just the marginal distribution of $\widehat{\beta}_j(\gamma,\lambda)$ as in \cite{weinstein2017power}. 
To accommodate this, we present a technical extension of existing AMP results, which underlies our analysis, but  may be of independent interest and have broader implications. 
The challenge is, in essence, to extend the convergence results in \cite{bayati2011dynamics} so they apply to functions (of the regression coefficients and their estimates) which are {\it not} symmetric with respect to all variables $1,...,p$. 
This basic result is formulated in Theorem \ref{thm:contrast_gen} (Section~\ref{sec:power}), and in turn facilitates calculation of the power curves in Corollaries \ref{cor:tinf} and \ref{cor:tinf}. 

\begin{table}[]
\centering
\begin{tabular}{| l |c|c|}
\hline
                   & \textbf{selection by Lasso} & \textbf{selection by thresholding} \\ \hline
\rule{0pt}{2.75ex} \textbf{Oracle setup}    & Su et al (2017)             & Wang et al (2020)                  \\ \hline
\rule{0pt}{2.75ex} \textbf{Knockoffs setup} & Weinstein et al. (2017+)    & \textit{current paper};\ \ Wang and Janson (2021)                \\ \hline
\end{tabular}
\caption{Current paper in the context of related works analyzing variable selection in the approximate message-passing setting.}
\label{tab:context}
\end{table}

\item \textbf{Asymptotic power predictions for Model-X knockoffs}.
To give an example of the consequences of the theoretical analysis in the current paper, the right panel of Figure \ref{fig:takeaway} shows asymptotic FDP versus TPP as predicted by the theory for Lasso \eqref{eq:lasso-var-selection} and thresholded-Lasso \eqref{eq:thresh-lasso} in both the oracle and knockoff versions. 
Here the undersampling ratio $\delta\equiv \lim(n/p) = 1$, the noise level $\sigma=1$, and $\beta_j$ has a mixture distribution of point mass at $M = 4.3$ with probability $\epsilon=0.1$, and at zero with probability $0.9$. 
In the figure broken lines represent the oracle procedures, and solid lines correspond to knockoff procedures.  
For thresholded-Lasso curves are shown for both Model-X knockoffs (as proposed in the current paper, and depicted in solid black in the figure) and counting knockoffs with $r=p$ fake columns (solid red). 
For Lasso selection, predictions with Model-X knockoffs are actually harder to obtain, because \eqref{eq:lasso-max} is not as useful an approximation when $W$-statistics are considered, so only the curve for counting knockoffs is shown (solid grey). 
Importantly, the ``oracle" version of thresholded-Lasso is implemented here with the optimal value for $\lambda$, see the discussion in Section \ref{sec:cv}. 
For the Model-X knockoffs version of thresholded-Lasso, the value of $\lambda$ used here is the limit of the (10-fold) cross-validation estimate, denoted later by $\lambdacvinfty$. 

Comparing first the two oracles, it is clear that thresholded-Lasso has a significantly better tradeoff curve: for example, FDP is about 25\% by the time Lasso detects 80\% of the signals, whereas thresholded-Lasso is able to detect about 90\% of the signal with the same FDP. 
Turning to the knockoff procedures, it can be seen that counting knockoffs performs slightly better than Model-X; the reason is that counting knockoffs use the lasso coefficient size itself instead of the difference $W_j$, but this is a small price to pay for Model-X in return for a much more general method. 
More importantly, both knockoff versions for thresholded-Lasso perform substantially better than the knockoffs version of Lasso, in fact much better than the {\it oracle} version for  Lasso, and even the universal lower bound of \cite{su2017false} on FDP (see the next paragraph). 
For example, knockoffs still attains TPP of about 80\% with $\textnormal{FDP}$ just above 10\%.

\item \textbf{Thresholded-Lasso selection breaks through the power-FDR tradeoff diagram of \cite{su2017false} also when knockoffs are used for calibration}. 
In \cite{su2017false} a power-FDR tradeoff diagram is provided for Lasso selection, which specifies the upper limit on the asymptotic power subject to maintaining $\fdr\leq q\in (0,1)$; this diagram depends on the undersampling ratio $\delta=n/p$ and the sparsity $\epsilon$, but holds independently of the magnitude of the nonzero regression coefficients. 
A consequence of the results in \cite{su2017false} is that, when the sparsity $\epsilon >0$, Lasso selection will fail to exactly recover the true model. 
In a recent article \cite{TardivelBogdan2018} proved that the aforementioned tradeoff diagram does not apply to thresholded-Lasso selection. 
More specifically, it is shown that for any value of the tuning parameter $\lambda$, proper thresholding of the Lasso coefficient estimates identifies the true model as long as the signal is strong enough, and provided $\epsilon<\rho(\delta)$, where $\rho(\cdot)$ is the famous Donoho-Tanner phase transition curve. 
However, the appropriate threshold depends on unknown parameters such as the sparsity $\epsilon$ and the signal magnitude, hence the practical significance of the results in  \cite{TardivelBogdan2018} is limited. 
In Theorem \ref{thm:break} we give a more quantitative result for the setting considered in the current paper, proving that a Model-X knockoffs analog of the thresholded-Lasso procedure still breaks through the tradeoff diagram of \cite{su2017false}. 
Thus, for any $q\in(0,1)$ Model X-knockoffs equipped with the Lasso coefficient-difference \cite[LCD hereafter]{candes2018panning} statistic, achieves power arbitrarily close to 1 if the signal is strong enough and the sparsity is below the transition curve corresponding to the augmented design.

\item \textbf{Optimal $\lambda$ is well approximated by cross-validation}. 
For a fixed $q$, the performance in terms of achievable TPP of the oracle thresholding selection procedure \eqref{eq:thresh-bridge} that has (asymptotic) $\text{FDP}$ level $q$, in general depends strongly on $\lambda$. 
For thresholded-Lasso ($\gamma=1$) this is demonstrated in \cite{wang2017bridge}, where a characterization is also given for the value of $\lambda$ that asymptotically maximizes TPP for a prescribed FDP level. 
When incorporating knockoffs, the analysis is more subtle because we operate with the difference in the estimate size between a variable and its knockoff counterpart, instead of the estimates themselves. 
While the dependence of the {\it exact} optimal $\lambda$ on the unknown parameters of the problem is fairly complicated, we demonstrate that, at least in the case of i.i.d.\ $\bX$, the optimal $\lambda$ can be well estimated by cross-validation on the augmented design. 
 To allow incorporating this into our asymptotic predictions, in Section \ref{sec:cv} we derive the formula for the limiting value of $\lambda$ chosen by cross-validation on the augmented design. 

\end{enumerate}

\begin{figure}
  \centering
\includegraphics[scale=.45]{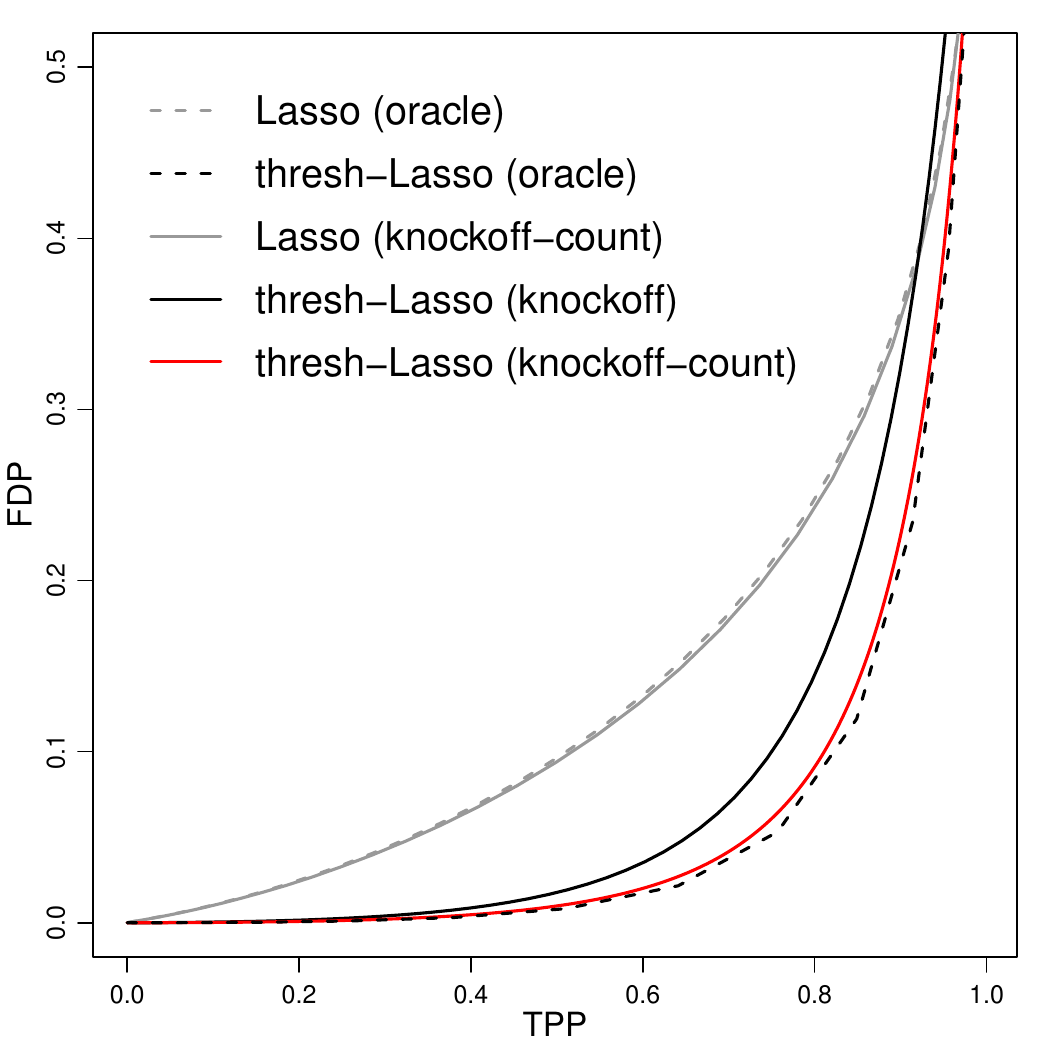}
\caption{
Asymptotic predictions for type-I error vs.~power in the i.i.d.~Gaussian $\bX$ setting, Lasso vs. thresholded-Lasso. 
See details in the main text.
}
\label{fig:takeaway}
\end{figure}

\section{Setup and review}\label{sec:section2}
\subsection{Setup}\label{subsec:setting}
Adopting the basic setting from \cite{su2017false}, our working hypothesis entails the linear model \eqref{eq:model-linear} with $\sigma^2$ fixed and unknown, and we consider an asymptotic regime where $n,p\to \infty$ such that $n/p\to \delta>0$. 
We assume that the matrix $\bX$ has i.i.d.\ $\mathcal{N}(0, 1/n)$ entries, so that the columns are approximately normalized. 
The components $\beta_j$ of the coefficient vector $\beta$ are assumed to be i.i.d.~copies of a mixture random variable, 
\begin{equation}\label{eq:Pi}
\Pi = (1-\epsilon)\delta_0 + \epsilon \Pi^*,
\end{equation}
where $\epsilon\in (0,1)$ is a constant, and where $\E \Pi^2 < \infty$. 
Here $\P(\Pi^* \neq 0) = 1$, so that $\P(\Pi \ne 0) = \epsilon \in (0,1)$. 
With some abuse of notation, we use $\Pi, \Pi^*$ to refer to either the random variable or its distribution, but the meaning should be clear from the context. 
Other than having a mass at zero, $\Pi$ is completely unknown, which is to say that $\epsilon$ and $\Pi^*$ are unknown. 
Finally, $\bX, \beta$, and $\xi$ are all independent of each other.

\medskip
Many selection rules first use the observed data to order the $p$ variables, that is, for some function $g$, an ``importance" statistic
\begin{equation*}
T = (T_1,...,T_p)^\top = g(\bX,Y)\in \RR^p
\end{equation*}
is computed, where larger (say) values of $T_j$ presumably indicate stronger evidence against the null hypothesis that $\beta_j= 0$. 
We assume that $g$ has the natural symmetry property that if $\bX'$ is obtained from $\bX$ by rearranging the columns, then $g(\bX',y)$ rearranges the elements of the vector $g(\bX,y)$ accordingly.
\footnote{Formally, the requirement is that for any permutation $\pi$ on $(1,...,p)$, $g(\bX_{\pi},y) = [g(\bX,y)]_{\pi}$, where $\bX_{\pi}$ is defined to be the matrix with its $j$-th column equal to the $\pi(j)$-th column of $\bX$. 
This mild condition is needed also in \cite{candes2018panning}.}
Given a target FDR level $q$, a final model can then be selected by taking
\begin{equation}\label{eq:threshold}
\widehat{\mathcal{S}} = \{j: T_j \geq t\},
\end{equation}
where $t=t(q)$ is a threshold that may generally depend on the observed data. 
For any choice of the importance statistic $T$ (i.e., for any choice of $g$), we define
\begin{equation}\label{eq:fdp:tpp:process}
\fdp(t) \equiv \frac{|\{j\in \calH: T_j\geq t, j\in \calH_0\}|}{|\{j\in \calH:T_j\geq t\}|}, \ \ \ \ \ \ \ \tpp(t) \equiv \frac{|\{j\in \calH:T_j\geq t, j\notin \calH_0\}|}{|\{j\in \calH: j\notin \calH_0\}|},
\end{equation}
again with the convention 0/0 = 0. 
In the rest of the paper we will consider importance statistics that derive from the convex program \eqref{eq:beta-bridge}. 
As presented in the Introduction, the case of $\gamma=1$ in the bridge optimization program \eqref{eq:beta-bridge} is of particular interest here, because 
in the Lasso case the estimator $\widehat{\beta}(\lambda)$ itself is sparse and can be used directly for variable selection. 
Therefore, we generally focus on the case $\gamma=1$ from now on, but, importantly, our new results are stated for any $\gamma\geq 1$ to match the generality in  \cite{wang2017bridge}.

\subsection{Basic AMP predictions}\label{subsec:amp}

For the Lasso program \eqref{eq:lasso}, we start with noting that, on defining
\begin{equation}\label{eq:lasso-max}
T_j = \max\{\lambda: \widehat{\beta}_j(\lambda)\neq 0\},
\end{equation}
we have $|\{j:T_j\geq t\}|\approx |\{j:\widehat{\beta}_j(t)\neq 0\}|$, because only variables that drop out from the Lasso path---that is, for which $\widehat{\beta}_j(\lambda_0)\neq 0$ but $\widehat{\beta}_j(\lambda_1)= 0$ for $\lambda_1<\lambda_0$---can contribute to the difference between the quantities; see discussion in \cite{weinstein2017power}. 
%
%
Therefore, we treat the comparison between \eqref{eq:lasso-var-selection} and \eqref{eq:thresh-lasso} as essentially a comparison between two procedures of the form \eqref{eq:threshold}, where $T_j$ is given by \eqref{eq:lasso-max} for Lasso, and by 
\begin{equation}\label{eq:lasso-coeff}
T_j = |\widehat{\beta}_j(\lambda)|
\end{equation}
for thresholded-Lasso. 
In anticipation of Section \ref{sec:power}, we call \eqref{eq:lasso-max} the {\it Lasso-max} statistic, and we call \eqref{eq:lasso-coeff} the {\it Lasso-coefficient} statistic. 

\smallskip
Remarkably, under the working hypothesis, exact asymptotic predictions of FDP and TPP can be obtained for both Lasso and thresholded-Lasso. 
Stated informally, Theorem 1 in \cite{bayati2011dynamics} asserts that {under our modeling assumptions}, in the limit as $n,p\to \infty$ we can ``marginally'' treat
\begin{equation}\label{eq:amp}
\left(\widehat{\beta}_j, \beta_j\right) \stackrel{\cdot}{\sim} \left(\eta_{\alpha \tau}(\Pi + \tau Z), \Pi \right), 
\end{equation}
and we use a dot above the ``$\sim$'' symbol to indicate that this holds only in that restricted sense. 
Above, $\eta_{\theta}(x)\equiv \sgn(x)\cdot (|x|-\theta)_+$ is the soft-thresholding operator (acting coordinate-wise); $Z\sim \mathcal{N}(0,1)$ and independent of $\beta$; and $\tau>0,\ \alpha >\max\{\alpha_0, 0\}$ is the unique solution to 
\begin{equation}\label{eq:system}
\begin{aligned}
\tau^2 &= \sigma^2 + \frac{1}{\delta}\EE(\eta_{\alpha\tau}(\Pi + \tau Z) - \Pi)^2 \\
\lambda &= \left( 1 - \frac{1}{\delta}\PP(|\Pi + \tau Z|\geq \alpha\tau) \right) \alpha\tau.
\end{aligned}
\end{equation}
Furthermore, $\alpha_0$ is the unique root of the equation
$(1+t^2)\Phi(-t) - t\phi(t) = \delta/2$. 
This result underlies the analysis in \cite{su2017false}, where it is formally shown (Lemma A.1) that
\begin{equation}\label{eq:amp:su}
\begin{aligned}
\frac{|\{j:\widehat{\beta}_j(\lambda)\neq 0, j\in \calH_0\}|}{p}&\stackrel{\PP}{\longrightarrow}2(1-\epsilon)\Phi(-\alpha),\\
\frac{|\{j:\widehat{\beta}_j(\lambda)\neq 0, j\notin \calH_0\}|}{p}&\stackrel{\PP}{\longrightarrow}\PP(|\Pi + \tau Z|\geq \alpha\tau, \Pi \neq 0) = \epsilon \PP(|\Pi^* + \tau Z|\geq \alpha\tau), 
\end{aligned}
\end{equation}
with $\alpha, \tau$ and $Z$ as described above. 
For a general importance statistic $T$, define
\begin{equation*}
\fdpinfty(t) \equiv \lim \fdp(t)\ \ \ \ \ \ \ \ \  \tppinfty(t) \equiv \lim \tpp(t),
\end{equation*}
where the limits are in probability. 
We use special notation for the limiting FDP and TPP corresponding to
the Lasso-max and to the Lasso-coefficient statistics: for the choice of $T_j$ in \eqref{eq:lasso-max} we write $\fdpinfty^{\textnormal{LM}}(t)$ and $\tppinfty^{\textnormal{LM}}(t)$, and for the choice of $T_j$ in \eqref{eq:lasso-coeff} we write $\fdpinfty^{\textnormal{LC}}(t;\lambda)$ and $\tppinfty^{\textnormal{LC}}(t;\lambda)$. 
In \cite{weinstein2017power}, \eqref{eq:amp:su} was used to approximate 
\begin{equation}\label{eq:amp:lasso:max}
\begin{aligned}
\fdpinfty^{\textnormal{LM}}(t) &\approx \frac{2(1-\epsilon)\Phi(-\alpha)}{2(1-\epsilon)\Phi(-\alpha) + \epsilon \PP(|\Pi^* + \tau Z|\geq \alpha\tau)}\\
\tppinfty^{\textnormal{LM}}(t) &\approx \PP(|\Pi^* + \tau Z|\geq \alpha\tau),
\end{aligned}
\end{equation}
where $(\alpha,\tau)$ are the solution to \eqref{eq:system} on replacing $\lambda$ by $t$. 

\medskip
In a more recent work, \cite{wang2017bridge} observed that the implications of \cite{bayati2011dynamics} can, with the necessary adaptations, be used to analyze TPP and FDP also for selection rules of the form \eqref{eq:thresh-bridge}. 
In particular, for thresholded-Lasso, Lemma 2.2 in \cite{wang2017bridge} asserts that
\begin{equation}\label{eq:amp:su:t}
\begin{aligned}
\frac{|\{j: |\widehat{\beta}_j(\lambda)|\geq t, j\in \calH_0\}|}{p} &\stackrel{\PP}{\longrightarrow}2(1-\epsilon)\Phi(-\alpha-t/\tau)\\
\frac{|\{j: |\widehat{\beta}_j(\lambda)|\geq t, j\notin \calH_0\}|}{p} &\stackrel{\PP}{\longrightarrow} \epsilon \PP(|\Pi^* + \tau Z|\geq  t+ \alpha\tau). 
\end{aligned}
\end{equation} 
It then follows that
\begin{equation}\label{eq:amp:lasso:coeff}
\begin{aligned}
\fdpinfty^{\textnormal{LC}}(t;\lambda) &= \frac{2(1-\epsilon)\Phi(-\alpha-t/\tau)}{2(1-\epsilon)\Phi(-\alpha-t/\tau) + \epsilon \PP(|\Pi^*/\tau +  Z|\geq  \alpha + t/\tau)}\\
\tppinfty^{\textnormal{LC}}(t;\lambda) &= \PP(|\Pi^*/\tau +  Z|\geq  \alpha + t/\tau),
\end{aligned}
\end{equation}
where $(\alpha,\tau)$ are determined by $\lambda$ through \eqref{eq:system}. 
Hence, the asymptotic TPP and FDP in \eqref{eq:amp:lasso:coeff} depend on the value of $\lambda$ at which the Lasso estimates are computed. 
Theorem 3.2 in \cite{wang2017bridge} further identifies the asymptotically optimal value of $\lambda$, proving that for any $\lambda>0$, 
\begin{equation*}
\tppinfty^{\textnormal{LC}}(t;\lambda^*) \leq \tppinfty^{\textnormal{LC}}(t;\lambda) \Longrightarrow \fdpinfty^{\textnormal{LC}}(t;\lambda^*) \leq \fdpinfty^{\textnormal{LC}}(t;\lambda)
\end{equation*} 
where 
\begin{equation}
\lambda^* = \displaystyle \argmin_{\lambda} \frac{1}{p}\|\widehat{\beta}(\lambda) - \beta\|^2_2. 
\end{equation} 
By inspection, we see that an equivalent characterization of $\lambda^*$ is the value of $\lambda$ corresponding to the minimum $\tau$ in \eqref{eq:system}. 
This characterization is useful for computing $\lambda^*$ as a function of $\epsilon, \Pi^*, \sigma^2$. 

\smallskip
Comparing the curves 
$
t\mapsto (\tppinfty(t), \fdpinfty(t))
$ 
corresponding to \eqref{eq:amp:lasso:max} and \eqref{eq:amp:lasso:coeff}, \cite{wang2017bridge} concluded that with an appropriate choice of $\lambda$, thresholded-Lasso can improve significantly over Lasso, in the sense that a target TPP level can be achieved with much smaller FDP, and as illustrated by the dotted curves in Figure \ref{fig:takeaway}. 

\subsection{Model-X knockoffs for FDR control}\label{subsec:knockoffs}
The choice of an adequate feature importance statistic is crucial for producing a good ordering of the $\beta_j$'s, from the most likely to be nonnull to the least likely to be nonnull. 
A separate question is how to set the threshold $\hat{t}$ in \eqref{eq:threshold} so that the FDR is controlled at a prespecified level. 
Inspired by \cite{barber2015controlling}, \cite{candes2018panning} proposed a general method for the random-X setting, Model-X knockoffs, that utilizes artificial null variables for finite-sample control of the FDR. 
Assuming that the distribution of the vector $X_i = (X_{i1},...,X_{ip})$ is known (but arbitrary), the basic idea is to introduce, for each of the $p$ original variables, a fake control so that, 
whenever $\beta_j=0$, the importance statistic for the $j$-th variable is indistinguishable from that corresponding to its fake copy. 
This property can then be exploited by keeping track of the number of {\it fake} variables selected as an estimate for the number of false positives. 

\smallskip
Under our working assumptions, the $p$ components $X_{i1},...,X_{ip}$ are i.i.d., in which case the construction of Model-X knockoffs is trivial. 
Thus, let $\ko\in \mathbb{R}^{n\times p}$ be a matrix with \iid~$\mathcal{N}(0, 1/n)$ entries drawn completely independently of $\bX$, $\xi$ and $\beta$, so that it holds in particular that $Y$ and $\ko$ are independent conditionally on $\bX$. 
We refer to $[\bX, \ko]\in \RR^{n\times 2p}$ as the {\it augmented} $X$-matrix. 

\smallskip
Ranking of the original $p$ features is based on contrasting the importance statistic for variable $j$ with that for its knockoff counterpart where, crucially, all importance statistics are computed on the augmented matrix. 
Thus, to obtain the analog of the selection procedure in \eqref{eq:thresh-bridge}, we first compute the $(2p)$-vector given by
\begin{equation}\label{eq:beta-bridge-kf}
\widehat{\beta}(\gamma,\lambda) = \displaystyle \argmin_{b\in \RR^{2p}} \frac{1}{2}\|Y - [\bX, \ko] b\|^2_2 + \lambda \|b\|_\gamma^\gamma
\end{equation}
instead of \eqref{eq:beta-bridge}, and form the differences 
\begin{equation}\label{eq:bridge-coefficient-difference}
W_j = |\widehat{\beta}_j(\gamma,\lambda)| - |\widehat{\beta}_{p+j}(\gamma,\lambda)|, \ \ \ \ \ j=1,...,p. 
\end{equation}
\smallskip
Because $\ko$ is a valid matrix of Model-X knockoffs, we have from Lemma 3.3. in \cite{candes2018panning} that the signs of the $W_j, j\in \mathcal{H}_0$, are i.i.d.\ coin flips (in fact, when $X_{ij},\ j=1,...,p$, are i.i.d., as considered here, this is easy to see directly from symmetry). 
In the knockoffs framework, variables are selected when their $W_j$ is large, that is, 
\begin{equation}\label{eq:threshold-kf}
\widehat{\mathcal{S}} = \{j: W_j\geq \that\},
\end{equation}
where $\that$ is a data-dependent threshold. 
The idea is to rely on the ``flip-sign" property of the $W_j$ to choose $\that$. 
Concretely, applying the knockoff filter by putting
\begin{equation}\label{eq:that}
\that = \min\left\{ t>0: \fdphat (t) \leq q \right\},\ \ \ \ \ \ \ \ \ \ \fdphat (t) \equiv \frac{1+\#\{j:W_j\leq -t\}}{\#\{j:W_j\geq t\}},
\end{equation}
ensures that the selection rule given by \eqref{eq:threshold-kf} controls the FDR at level $q$ by Theorem 3.4 in \cite{candes2018panning}. 

\smallskip
To obtain the knockoffs counterpart of thresholded-Lasso selection, we specialize \eqref{eq:bridge-coefficient-difference} to $\gamma=1$, recovering the {\it Lasso coefficient-difference} (LCD) statistic introduced in \cite{candes2018panning}, 
\begin{equation}\label{eq:lcd}
W_j = |\widehat{\beta}_j(\lambda)| - |\widehat{\beta}_{p+j}(\lambda)|,
\end{equation}
where
\begin{equation}\label{eq:lasso-kf}
\widehat{\beta}(\lambda) = \displaystyle \argmin_{b\in \RR^{2p}} \frac{1}{2}\|Y - [\bX, \ko] b\|^2_2 + \lambda \|b\|_1
\end{equation}
is the Lasso solution for the augmented setup, i.e., the estimator \eqref{eq:beta-bridge-kf} obtained for $\gamma=1$. 
The corresponding selection procedure in \eqref{eq:threshold-kf} will be referred to from now on as the {\it level-$q$ LCD-knockoffs procedure}. 


\smallskip
Similarly to the notation in Section \ref{sec:section2}, we write $\fdpinfty^{\textnormal{LCD}}(t),\ \tppinfty^{\textnormal{LCD}}(t)$, respectively, for $\fdpinfty(t)$ and $\tppinfty(t)$ associated with the statistic \eqref{eq:lcd}. 
Finally, let 
$$
\widehat{\fdpinfty}^{\textnormal{LCD}}(t) \equiv \lim \fdphat (t)
$$
be the limit of the (knockoffs) estimate of FDP given in \eqref{eq:that} for $\gamma=1$.

\smallskip
Before proceeding to the main section, we recall an alternative implementation of knockoffs for the special case of i.i.d. matrices.  

\subsubsection{``Counting" knockoffs for i.i.d.~matrices}\label{subsubsec:counting}
In the special case where $X_{i1},...,X_{ip}$ are i.i.d., there is in fact a simpler approach to implementing a knockoff procedure, as proposed in \cite{weinstein2017power}. 
Instead of pairing each original covariate with a designated knockoff copy ($X_j$ with $\tilde X_j$),
we can leverage the information that the covariates are i.i.d., and therefore exchangeable, to create a single pool of knockoff variables
$\tilde X_1,\dots,\tilde X_r$ that act as a  ``control group'' simultaneously for each $X_1,\dots,X_p$.

To be concrete,
for some integer $r>0$, suppose we make the matrix $\ko$ of dimension $n\times r$ instead of $n\times p$, still with \iid~$\mathcal{N}(0, 1/n)$ entries as before. 
Then by the symmetry in the problem, the distribution of the fitted coefficient vector $\hat\beta_1,\dots,\hat\beta_{p+r}$ (conditional on $\beta$)
is unchanged under any reordering of the indices in the ``extended" null set,
$$
\mathcal{H}_0 \cup \mathcal{K}_0,
$$
where $\mathcal{K}_0 \equiv \{p+1,...,p+r\}$.
This is a stronger notion of exchangeability (all null covariates are exchangeable with all knockoff variables),
as compared to the pairwise exchangeability property of the general Model-X framework (where each null $X_j$ is only exchangeable with its own knockoff copy $\tilde X_j$).
Exploiting this stronger form of exchangeability,
  \cite{weinstein2017power} prove FDR control---for example, we could take the procedure that rejects $H_{0j}$ whenever $\hat\beta_j\geq \hat{t}$ for\footnote{
We note that, while \cite{weinstein2017power} focus on a different statistic, all of their results concerning FDR control apply equally well to what we call the Lasso-coefficient statistic in the following section. 
  }
\begin{equation}
\hat{t} = \inf\left\{ t\in \RR: \frac{\frac{1}{r+1}\sum_{j\in\mathcal{K}_0}\mathbf{1}\{\hat\beta_j>t\}}{\frac{1}{p}\sum_{j\in\mathcal{H}}\mathbf{1}\{\hat\beta_j > t\}} \leq q\right\},
\end{equation}
and use AMP machinery to derive the appropriate formulas for the power. In particular, power is gained from the fact that, 
if we choose $r$ to be smaller than $p$ (e.g., $r=c\cdot p$ for some $0 < c < 1$), 
the variable selection accuracy of the Lasso is better since we have $n$ observations and $p+r = p(1+c)$ many covariates, rather than $n$ observations and $2p$ covariates
as with Model-X knockoffs.

\smallskip
However, the counting knockoffs strategy is extremely specific to the i.i.d.~design setting: if the $X_j$'s are not themselves i.i.d.~(or exchangeable),
then we cannot hope to construct a single control group that can be shared by a heterogeneous set of covariates.
The Model-X construction, with knockoff $\widetilde X_j$ designed to pair with $X_j$, is therefore substantially more interesting to study 
in terms of understanding the performance of this methodology in non-i.i.d.~settings. 



%

\section{AMP predictions for knockoffs} \label{sec:power}
The results presented thus far are not novel. 
In this section we find the asymptotic FDP and TPP for the Model-X knockoffs versions of the thresholded-bridge selection rules \eqref{eq:thresh-bridge}, in particular for the level-$q$ LCD-knockoffs procedure, and present new results. 
For the knockoffs procedure to control the FDR, the i.i.d.~Gaussian assumption on the $p$ coordinates of $X_i = (X_{i1},...,X_{ip})$ is by no means necessary, and there is indeed no such assumption in \cite{candes2018panning}. 
In the current paper, on the other hand, the goal is to compare the (asymptotic) power of the ``oracle" thresholded-bridge selection procedure \eqref{eq:thresh-bridge} to that of its knockoffs version. 
In particular, for $\gamma=1$ we ultimately want to compare the curves
\begin{equation}\label{eq:curves}
q\mapsto \tppinfty^{\textnormal{LC}}(t^{\infty}(q))\ \ \ \ \ \text{vs.}\ \ \ \ \  q\mapsto \tppinfty^{\textnormal{LCD}}\left( \that^{\infty}(q) \right),
\end{equation}
where the quantities $t^{\infty}(q)$ and $\that^{\infty}(q)$ are defined, respectively, as the values $t^{\infty}$ and $\that^{\infty}$ for which
\begin{equation}\label{eq:tinfty}
\fdpinfty^{\textnormal{LC}}\left(t^{\infty}\right) = q,\ \ \ \ \ \ \ \ \ \ \widehat{\fdpinfty}^{\textnormal{LCD}}(\that^{\infty}) = q. 
\end{equation}
Of course, how the two curves in \eqref{eq:curves} compare on power at every given $q$, depends on the underlying model, including the dependence structure among the coordinates of $X_i$. 
We now proceed to obtaining power predictions for Model-X knockoffs under the asymptotic setting of Section \ref{subsec:setting}. 

\medskip 
The main technical challenge is to validate that the theory from \cite{bayati2012} carries over to the {\it knockoff} setup involving $W$-statistics. 
To overcome this technical challenge, we develop a ``local'' version of AMP theory that applies to the broad class of knockoff-calibrated selection procedures in \eqref{eq:threshold-kf}. 
More specifically, as compared to \eqref{eq:amp}, in order to analyze the knockoffs selection procedure \eqref{eq:threshold-kf} we need to study the {triples} $(\widehat{\beta}_{j}(\gamma,\lambda), \beta_j, \widehat{\beta}_{p+j}(\gamma,\lambda))$ rather than the {pairs} $(\widehat{\beta}_{j}(\gamma,\lambda), \beta_j)$. 
Theorem~\ref{thm:contrast_gen} below asserts that, for our asymptotic FDP and TPP calculations, we can treat
\begin{equation}\label{eq:amp-kf}
\left(\widehat{\beta}_{j}(\gamma,\lambda), \beta_j, \widehat{\beta}_{p+j}(\gamma,\lambda) \right) \stackrel{\cdot}{\sim} \left(\eta_{\alpha'\tau'^{2-\gamma}, \gamma}(\Pi + \tau' Z), \Pi, \eta_{\alpha'\tau'^{2-\gamma},\gamma}(\tau' Z')\right),
\end{equation}
which is an extension of \eqref{eq:amp}. 
Above, $Z$ and $Z'$ are independent $\mathcal{N}(0,1)$ random variables that are furthermore independent of $\beta_j$, the operator $\eta_{\theta, \gamma}$ with threshold level $\theta > 0$ is defined as
\begin{equation}\label{eq:gen_soft}
\eta_{\theta, \gamma}(u) := \argmin_{z} \frac12 (u - z)^2 + \theta |z|^\gamma,
\end{equation}
and $(\alpha',\tau')$ are the unique solution to the equation~\cite{weng2018overcoming}
\begin{equation}\label{eq:system-kf-bridge}
\begin{aligned}
&\tau^2 = \sigma^2 + \frac1{\delta} \E\left[\eta_{\alpha\tau^{2-\gamma}, \gamma}(\Pi + \tau Z) - \Pi \right]^2 + \frac1{\delta} \E \eta_{\alpha\tau^{2-\gamma}, \gamma}(\tau Z)^2\\
&\lambda = \left[1 - \frac1{\delta}\E \eta'_{\alpha\tau^{2-\gamma}, \gamma}(\Pi + \tau Z) - \frac1{\delta}\E \eta'_{\alpha\tau^{2-\gamma}, \gamma}(\tau Z) \right] \alpha\tau^{2-\gamma}.
\end{aligned}
\end{equation}
In the special case $\gamma = 1$, where the bridge estimator is just the Lasso estimator, the operator $\eta_{\theta, \gamma}$ reduces to the soft-thresholding operator $\eta_{\theta, 1}(u) = \eta_{\theta}(u)\equiv \sgn(x)\cdot (|u|-\theta)_+$, and \eqref{eq:system-kf-bridge} becomes
\begin{equation}\label{eq:system-kf}
\begin{aligned}
&\tau^2 = \sigma^2 + \frac1{\delta} \E\left[\eta_{\alpha\tau}(\Pi + \tau Z) - \Pi \right]^2 + \frac1{\delta} \E \eta_{\alpha\tau}(\tau Z)^2\\
&\lambda = \left[1 - \frac1{\delta}\P(|\Pi + \tau Z| > \alpha\tau) - \frac1{\delta}\P(|\tau Z| > \alpha\tau)\right] \alpha\tau.
\end{aligned}
\end{equation}
The following theorem formalizes the notion in which \eqref{eq:amp-kf} holds, and is our main theoretical result.


\begin{theorem}\label{thm:contrast_gen}
Let $f$ be any bounded continuous function defined on $\R^3$. Then, we have
\[
\frac1{p}\sum_{i=1}^p f\left(\widehat{\beta}_{i}(\gamma,\lambda), \beta_i, \widehat{\beta}_{p+i}(\gamma,\lambda)\right) \goto \E f\left(\eta_{\alpha'\tau'^{2-\gamma}, \gamma}(\Pi + \tau' Z), \Pi, \tau'\eta_{\alpha', \gamma}(Z')\right)
\]
in probability. Here $(\alpha', \tau')$ are the unique solution to \eqref{eq:system-kf-bridge}, and $Z$ and $Z'$ are two independent standard normal random variables, which are further independent of $\Pi$. 
\end{theorem}

\begin{remark}
Note that the generalized soft-thresholding operator \eqref{eq:gen_soft} satisfies $\eta_{\alpha' \tau'^{2-\gamma}, \gamma}(\tau' z) = \tau' \eta_{\alpha', \gamma}(z)$ for any $z$.
\end{remark}

\smallskip
For the special Lasso case, $\gamma = 1$, we actually have the stronger result below. 
\begin{proposition}\label{thm:contrast}
Under the assumptions of Theorem~\ref{thm:contrast_gen}, the Lasso estimator $\widehat{\beta}(\lambda)$ satisfies 
\[
\frac1{p}\sum_{i=1}^p f \left(\widehat{\beta}_{i}(\lambda), \beta_i, \widehat{\beta}_{p+i}(\lambda)\right) \goto \E f\left( \sgn(\Pi + \tau' Z) (|\Pi + \tau' Z| - \alpha'\tau')_+, \Pi, 
\tau'\sgn(Z') (|Z'| - \alpha')_+ \right)
\]
in probability, where $(\alpha', \tau')$ are the unique solution to \eqref{eq:system-kf}. 
Moreover, the convergence in probability is uniform over $\lambda$ in any compact set of $(0, \infty)$.
\end{proposition}

The proofs of Theorem~\ref{thm:contrast_gen} and Proposition~\ref{thm:contrast} are deferred to Appendix~\ref{sec:proof-theorem}. 
For the Lasso case $\gamma=1$, a similar result was obtained in a simultaneous and independent work by \cite[see their Theorem 6 and the corresponding analysis]{wang2021high}. 
There are, however, some differences. 
First, our Theorem \ref{thm:contrast_gen}, of which the first assertion in Proposition~\ref{thm:contrast} is a direct consequence, applies more generally to any bridge estimator with $\gamma\geq 1$. 
Second, the techniques we use in the proof are quite different, and these allow us to establish {\it uniform} convergence in $\lambda$ for the Lasso case in second assertion of Proposition \ref{thm:contrast}. 
The uniform convergence is essential for our results to apply when selecting $\lambda$ by cross-validation, as we recommend in Section~\ref{sec:cv}. 
Proposition \ref{thm:contrast} is also closely related to Corollary 1 in \cite{bayati2011dynamics}, which can be viewed as a ``marginal'' version of the above assertion: in the Model-X knockoffs context, \cite{bayati2011dynamics} implies the convergence of a sum over all pairs $i, j$ such that $1 \le i, j \le 2p$, as opposed to ``diagonal'' pairs $i, p+i$ for $1 \le i \le p$ in Proposition~\ref{thm:contrast} above. 
Corollary 1 in \cite{bayati2011dynamics} then follows by making use of its conditional (hence stronger) counterpart, Proposition~\ref{thm:contrast}. 
More generally, just as Corollary 1 in \cite{bayati2011dynamics} applies to a tuple of any number of indices, Proposition~\ref{thm:contrast} can be readily extended to multiple knockoffs (where several knockoff copies are generated for each original variable). 
This extension would enable a theoretical comparison similar to that presented in the current paper except with multiple knockoffs, and we leave this interesting direction for future research.

\smallskip
Theorem \ref{thm:contrast_gen} allows us to calculate the limits of $\tpp(t)$ and $\fdp(t)$ for the selection path of the $W$-statistic \eqref{eq:bridge-coefficient-difference} for any $\gamma \geq 1$, which includes the LCD statistic as a special case.


\begin{corollary}\label{cor:tpp-fdp-inf}
For fixed $\gamma\geq1$ and $\lambda>0$, consider the variable selection procedure given by \eqref{eq:threshold-kf}. 
Then the asymptotic FDP and TPP at any fixed threshold $t>0$ are, respectively, 
\begin{equation}\label{eq:tppfdpinfty}
\begin{aligned}
\fdpinfty(t) &= \frac{(1-\epsilon) \P ( |\tau' \eta_{\alpha', \gamma}(Z)| - |\tau' \eta_{\alpha', \gamma}(Z')| \ge t )}{ \P ( |\eta_{\alpha'\tau'^{2-\gamma}, \gamma}(\Pi + \tau' Z)| - |\tau' \eta_{\alpha', \gamma}(Z')| \ge t ) }, \\[0.5ex]
\tppinfty(t) &= \P ( |\eta_{\alpha'\tau'^{2-\gamma},\gamma}(\Pi + \tau' Z)| - |\tau' \eta_{\alpha', \gamma}(Z')| \ge t | \Pi \neq 0).
\end{aligned}
\end{equation}
\end{corollary}

%

Moreover, Theorem \ref{thm:contrast_gen} allows us to calculate the limit of the corresponding knockoffs estimate of the FDP. 

\begin{corollary}\label{cor:tinf}
Fix $\gamma\geq1$ and $\lambda>0$. 
Then for any $t>0$, the limit of $\fdphat (t)$ in Equation \eqref{eq:that} is given by
\begin{equation}\label{eq:fdpinfty}
\widehat{\fdpinfty}(t) = \frac{ \P ( |\eta_{\alpha'\tau'^{2-\gamma}, \gamma}(\Pi + \tau' Z)| - |\tau' \eta_{\alpha', \gamma}(Z')| \le -t ) }{ \P ( |\eta_{\alpha'\tau'^{2-\gamma}, \gamma}(\Pi + \tau' Z)| - |\tau' \eta_{\alpha', \gamma}(Z')| \ge t ) )}. 
\end{equation}
\end{corollary}



\begin{remark}
See the proofs of these two corollaries in Appendix~\ref{sec:proof-theorem}. It can be shown that the convergence is uniform in bounded $t$.
\end{remark}

In particular, from Corollaries \ref{cor:tinf} and \ref{cor:tpp-fdp-inf} we can calculate $\tppinfty^{\textnormal{LCD}}\left( \that^{\infty}(q) \right)$, the asymptotic TPP achievable by the level-$q$ LCD-knockoffs procedure: setting $\gamma=1$, for a given $q$ first compute $\that^{\infty}$ as the value of $t>0$ such that 
$$
\widehat{\fdpinfty}(t) = q, 
$$
and then plug it into the second equation in \eqref{eq:tppfdpinfty} to find $\tppinfty^{\textnormal{LCD}}\left( \that^{\infty} \right)$. 
It is easy to verify the relationship 
\begin{equation}\label{eq:overestimate}
\widehat{\fdpinfty}^{\textnormal{LCD}}(t) = \fdpinfty^{\textnormal{LCD}}(t) + \frac{ \epsilon \P ( |\eta_{\alpha'\tau', 1}(\Pi + \tau' Z)| - |\tau' \eta_{\alpha', 1}(Z')| \le -t | \Pi \neq 0) }{ \P ( |\eta_{\alpha'\tau', 1}(\Pi + \tau' Z)| - |\tau' \eta_{\alpha', 1}(Z')| \ge t ) },
\end{equation}
so that $\widehat{\fdpinfty}^{\textnormal{LCD}}(t)$ overestimates $\fdpinfty^{\textnormal{LCD}}(t)$, the actual asymptotic FDP. 
However, the difference between the two is typically very small: because the random variable $|\eta_{\alpha'\tau', 1}(\Pi + \tau' Z)| - |\tau' \eta_{\alpha', 1}(Z')|$ is designed to tend to large values when $\Pi\neq 0$, the second term on the right hand side of \eqref{eq:overestimate} is typically much smaller than $\epsilon$, for example it converges to zero when the magnitude of nonzero elements of $\beta$ increases. 
In other words, using the observable random variable $\widehat{\fdp}(t)$ in \eqref{eq:that} instead of $\fdp(t)$, does not make LCD-knockoffs overly conservative. 
We note that the conservativeness was a nuisance in the (alternative) ``counting" knockoffs implementation in \cite{weinstein2017power}, where an estimate of $\epsilon$ that requires an extra (user-specified) tuning parameter, was incorporated to mitigate the effect. 
Here, conveniently, the use of $W$-statistics obviates the need to estimate $\epsilon$. 

\smallskip
Figure \ref{fig:simulation} shows knockoffs power, $\tppinfty^{\textnormal{LCD}}\left( \that^{\infty}(q) \right)$, against ``oracle" power ,$\tppinfty^{\textnormal{LC}}(t^{\infty}(q))$, when the nominal FDR value $q$ varies. 
We took $\sigma = 1$, and $\Pi$ to be a mixture of mass $0.9$ at zero and mass $0.1$ at $M=4$, while $\delta$ varies in the four panels. 
The tuning parameter $\lambda$ is selected separately for each procedure: for the oracle, this is the optimal $\lambda$ obtained by minimizing the value of $\tau$; for knockoffs, we use the limit $\lambdacvinfty$ of the (10-fold) cross-validation estimate, see Section \ref{sec:cv}. 
We can see that for $\delta\geq 1$, the powers obtained by knockoffs and the oracle are very similar for any $q$. 
When $\delta$ is smaller, the loss of power is more pronounced. 
This is mainly because the Lasso estimate itself has larger variance $\tau$ for small values of $\delta$; see the left panel of Figure \ref{fig:tau_vs}. 
However, for all considered values of $\delta$ 
the relative difference decreases with the power of the oracle (i.e., when $q$ or the magnitude of nonzero elements of $\beta$ increases). 
The dotted lines in Figure \ref{fig:tpp} are obtained by implementing ``counting" knockoffs instead of Model-X knockoffs ($r=1$); 
the power curve is slightly better as compared to Model-X knockoffs because the importance statistic itself is used for each feature rather than the $W$-statistic. 

\begin{figure}
  \centering
\includegraphics[scale=.5]{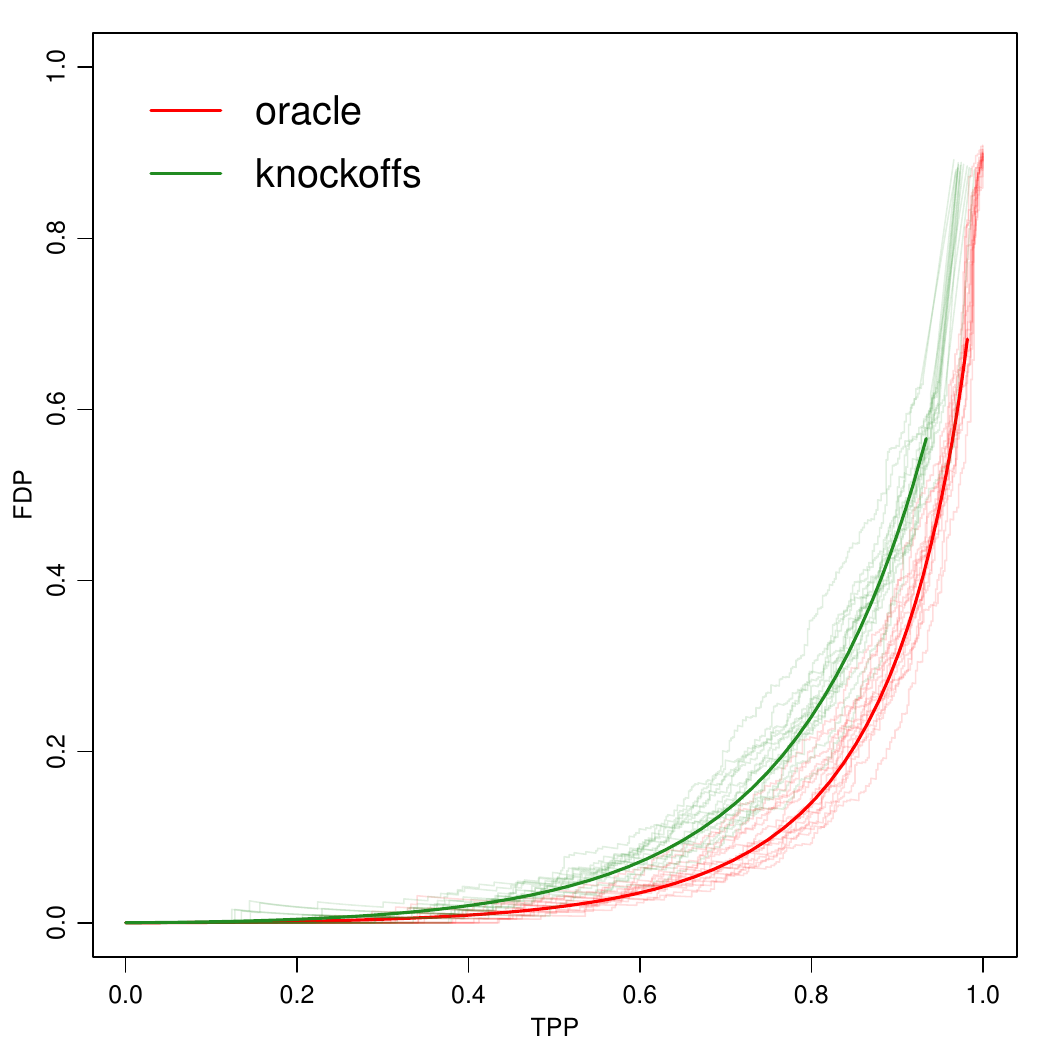} 
\caption{
The right panel presents the FDP-TPP tradeoff curves for thresholded-Lasso in a simulated example. 
Light, thin lines represent (random) realizations from 15 simulated runs. 
Dark, thick lines are theoretical predictions. 
}
\label{fig:simulation}
\end{figure}

\smallskip
Figure \ref{fig:tpp} complements Figure \ref{fig:simulation} by showing FDP-TPP tradeoff paths from a simulation, with the theoretical asymptotic predictions superimposed. 
For both the oracle and the knockoffs versions of thresholded-Lasso we plotted the tradeoff curve in each of 15 realizations from an example with $n=p=5000$ (the other parameters are as in Figure \ref{fig:tpp}). 
To avoid crowding the figure, we plot only the paths for Model-X knockoffs (and not for counting knockoffs). 
We can see a good agreement between the empirical results and the theory.

\begin{figure}
  \centering
\includegraphics[scale=.8]{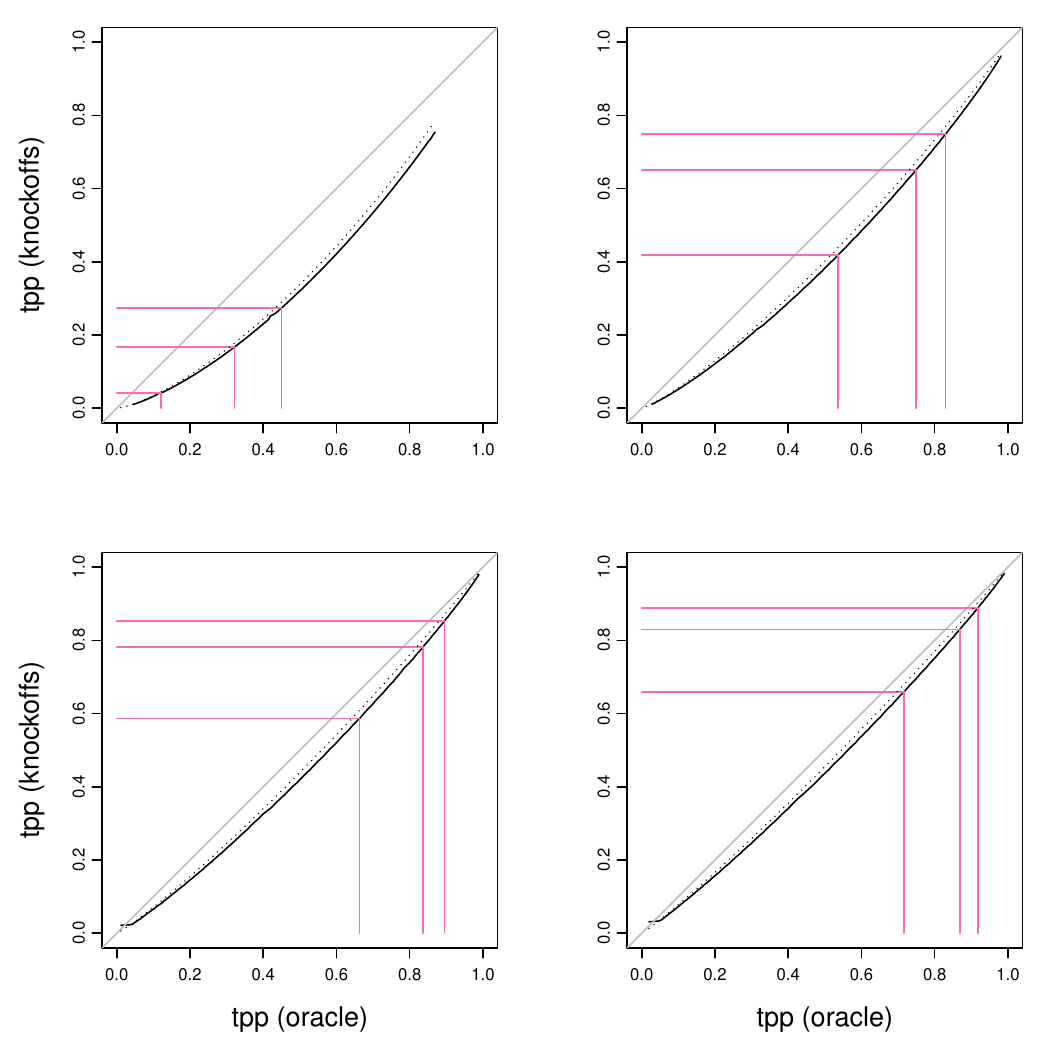} 
\caption{
The parametric curve $q\mapsto \left( \tppinfty^{\textnormal{LC}}(t^{\infty}(q)), \ \tppinfty^{\textnormal{LCD}}\left( \that^{\infty}(q) \right) \right)$. 
For completeness, the analogous curve for the counting knockoffs strategy of Section \ref{subsubsec:counting} with $r=1$, is also shown by the dotted lines. 
Each panel corresponds to a different value of $\delta$: from top left and clockwise, $\delta = 0.5,1,1.5,2$. 
In all panels, $\sigma = 1$ and $\Pi$ has mass $0.9$ at zero and mass $0.1$ at $M=4$. 
Pink segments indicate $q=0.01$ (closest to origin), $0.05$ and $0.1$ (farthest from origin). 
}
\label{fig:tpp}
\end{figure}

\begin{figure}
  \centering
  \begin{subfigure}{.47\textwidth}
  \centering
  \includegraphics[width=\linewidth]{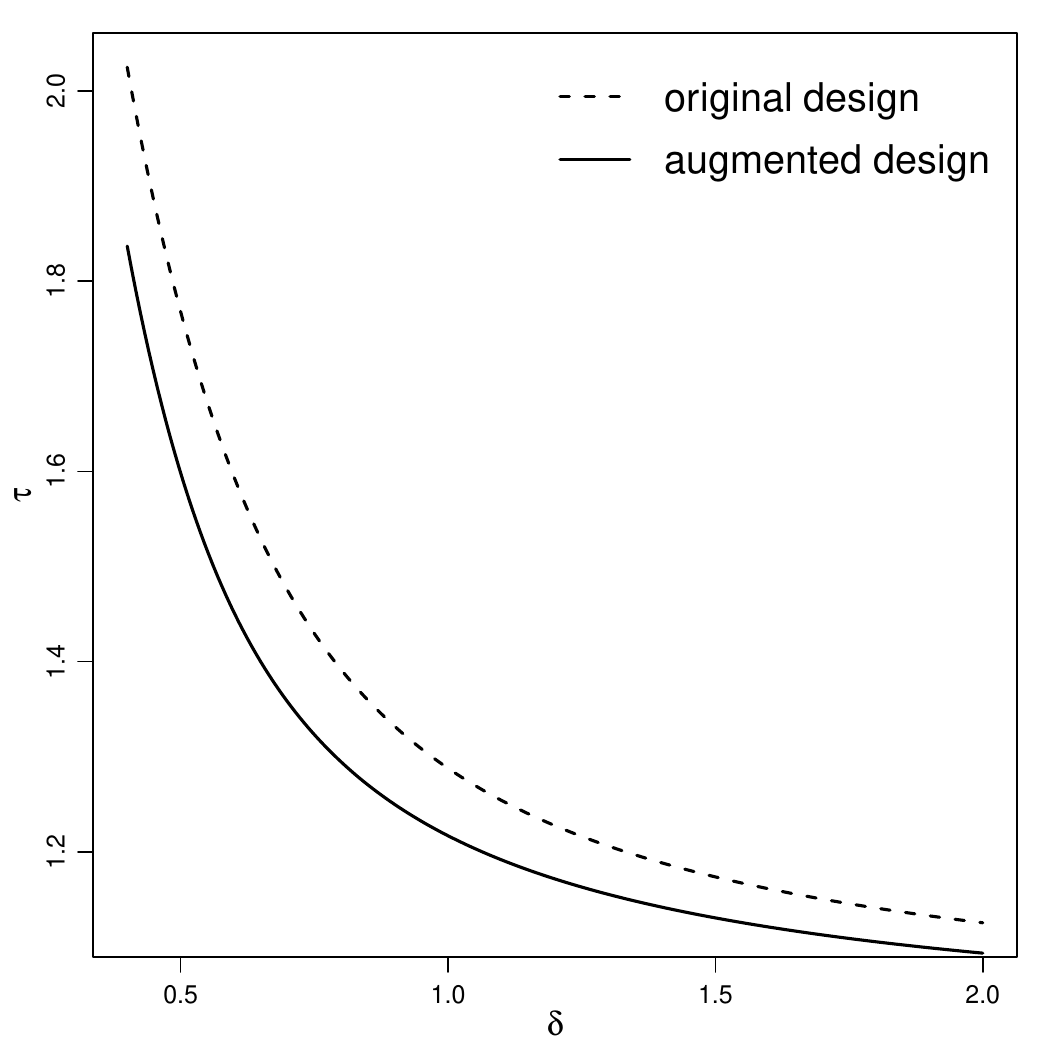}
\end{subfigure}
\begin{subfigure}{.47\textwidth}
  \centering
  \includegraphics[width=\linewidth]{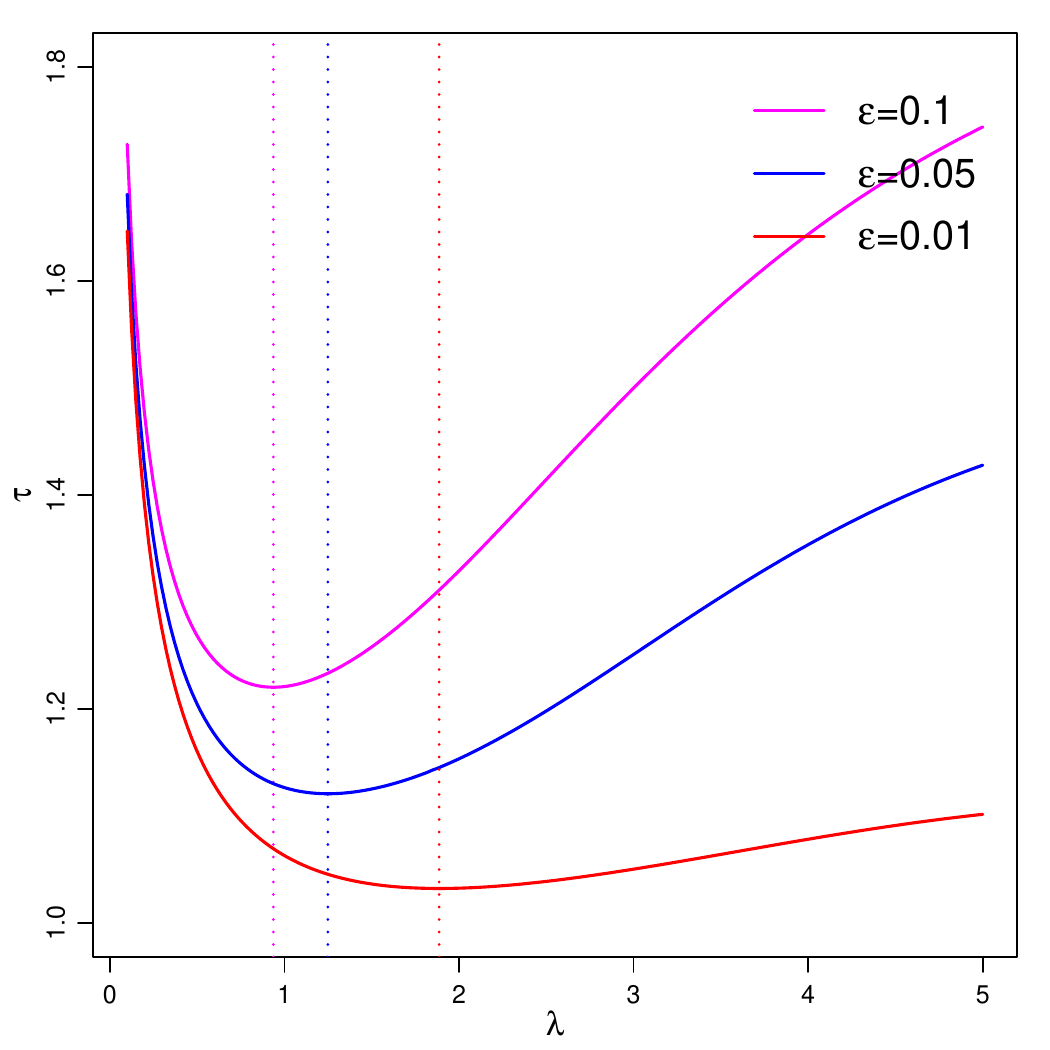}
\end{subfigure}

\caption{The left panel displays the parameter $\tau$ versus $\delta$ for the original and augmented designs, and $\lambda=1$. 
The right panel displays $\tau$ versus $\lambda$ for different values of $\delta$ and $\epsilon$. 
Dotted vertical lines represent the optimal $\lambda$. 
}
\label{fig:tau_vs}
\end{figure}


\medskip
We conclude this section with Theorem \ref{thm:break} below, that applies to the Lasso case $\gamma=1$ and formalizes the notion that the LCD-knockoffs procedure allows to break through the FDP-TPP diagram presented in \cite{su2017false}. 
Specifically, the following result says that for any nominal FDR level $q > 0$ that is not too close to 1, if the signal is strong enough then the LCD-knocknoffs procedure has asymptotic power arbitrarily close to one, as long as the signal sparsity $\epsilon$ satisfies 
\begin{equation}\label{eq:transition}
\epsilon < 2\epsilon^*(\delta/2),
\end{equation}
where $\epsilon^*(\delta)$ is a point on the Donoho--Tanner transition curve \cite{DonTan05}. 

\begin{definition}\label{def:esp-growing}
A sequence of random variables $\Pi_m$ is said to be {\it $\epsilon$-sparse and growing}, if $\P(\Pi_m \ne 0) = \epsilon$ for all $m$, and
\begin{equation*}
\P(|\Pi_m|>M|\Pi_m \neq 0)\rightarrow 1
\end{equation*}
as $m \goto \infty$ for every $M > 0$.
\end{definition}


\begin{theorem}\label{thm:break}
Fix $q>0$ and denote by $\tpp(\lambda, \Pi, q)$ the true positive proportion of the level-$q$ LCD-knockoffs procedure that uses parameter $\lambda$. 
Moreover, fix $\epsilon$ such that \eqref{eq:transition} holds. 
Then for any sequence $\{\Pi_m\}$ that is $\epsilon$-sparse and growing, it holds that for any fixed $0 < \lambda_1 < \lambda_2$ and any $\nu > 0$, there exist $m'$ and $n'(m)$ such that
\[
\P\left ( \inf_{\lambda_1 \le \lambda \le \lambda_2} \tpp(\lambda, \Pi_m, q) > 1 - \nu \right) \ge 1 - \nu
\]
if $m \geq m'$ and $n \ge n'(m)$.
\end{theorem}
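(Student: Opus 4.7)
\textbf{Proof plan for Theorem~\ref{thm:break}.} My approach combines the uniform-in-$\lambda$ AMP limit from Theorem~\ref{thm:contrast} with a monotonicity argument for the knockoff threshold $\that$, and a careful analysis of how the state-evolution parameters $(\alpha',\tau')$ behave when the signal grows. For fixed $\lambda$, Corollaries~\ref{cor:tpp-fdp-inf} and~\ref{cor:tinf} describe the in-probability limits of $\tpp(t)$ and $\fdphat(t)$ as functionals of $\Pi$, $\alpha'(\lambda)$ and $\tau'(\lambda)$. The plan is to exhibit, uniformly in $\lambda\in[\lambda_1,\lambda_2]$, a threshold $t^*=t^*(\lambda)$ such that (i) $\fdphat(t^*)\le q$ with high probability---this forces $\that\le t^*$, since $\that$ is the smallest accepted threshold---and (ii) $\tpp(t^*)$ is close to $1$. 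Monotonicity of $t\mapsto\tpp(t)$ then yields $\tpp(\that)\ge\tpp(t^*)$, completing the argument.

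\emph{Step 1: state evolution under a growing signal.} Let $(\alpha'_m(\lambda),\tau'_m(\lambda))$ denote the solution of \eqref{eq:system-kf} with $\Pi=\Pi_m$. The growing condition $\P(|\Pi_m|>M\mid\Pi_m\ne 0)\to 1$ for every $M>0$ implies
\begin{align*}
\E[\eta_{\alpha\tau}(\Pi_m+\tau Z)-\Pi_m]^2 &\longrightarrow (1-\epsilon)\,\E\,\eta_{\alpha\tau}(\tau Z)^2+\epsilon(1+\alpha^2)\tau^2,\\
\P(|\Pi_m+\tau Z|\ge\alpha\tau) &\longrightarrow (1-\epsilon)\,\P(|\tau Z|\ge\alpha\tau)+\epsilon,
\end{align*}
uniformly on compact $(\alpha,\tau)$-regions. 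The hypothesis that the sparsity of the augmented problem lies strictly below the Donoho--Tanner transition curve guarantees that the resulting limiting system admits a unique, locally stable solution $(\alpha^*(\lambda),\tau^*(\lambda))$ with $\tau^*(\lambda)<\infty$, and an implicit-function argument then yields the convergence $(\alpha'_m(\lambda),\tau'_m(\lambda))\to(\alpha^*(\lambda),\tau^*(\lambda))$ continuously and uniformly in $\lambda\in[\lambda_1,\lambda_2]$.

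\emph{Step 2: limiting FDP and TPP.} Plugging the Step-1 limit into Corollaries~\ref{cor:tpp-fdp-inf} and~\ref{cor:tinf}, and using that, conditional on $\Pi_m\ne 0$, the random variable $|\eta_{\alpha^*\tau^*}(\Pi_m+\tau^*Z)|-|\tau^*\eta_{\alpha^*}(Z')|$ diverges to $+\infty$ in probability, one obtains, for each fixed $t>0$,
\begin{align*}
\lim_{m\to\infty}\widehat{\fdpinfty}^{\textnormal{LCD}}(t) &= \frac{(1-\epsilon)\,\P(|\tau^*\eta_{\alpha^*}(Z)|-|\tau^*\eta_{\alpha^*}(Z')|\ge t)}{(1-\epsilon)\,\P(|\tau^*\eta_{\alpha^*}(Z)|-|\tau^*\eta_{\alpha^*}(Z')|\ge t)+\epsilon},\\
\lim_{m\to\infty}\tppinfty^{\textnormal{LCD}}(t) &= 1.
\end{align*}
The first right-hand side is continuous in $t$ and tends to $0$ as $t\to\infty$, so for each $\lambda$ there is a $t^*(\lambda)$ whose limiting value is strictly less than $q/2$; by the continuity from Step~1, $t^*(\lambda)$ can be chosen continuous, hence bounded on $[\lambda_1,\lambda_2]$ by a finite constant.

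\emph{Step 3: uniformity in $\lambda$ and conclusion.} I would then invoke the uniformity in $\lambda$ of Theorem~\ref{thm:contrast}. The indicators $\mathbf{1}\{W\ge t\}$ and $\mathbf{1}\{W\le -t\}$ entering $\fdphat$ and $\tpp$ are discontinuous, so they must be sandwiched between bounded continuous test functions; combined with the atomlessness of the limiting law of $W$ (which follows because $\eta_{\alpha^*\tau^*}(\Pi+\tau^*Z)$ has a continuous distribution off $\{0\}$), this upgrades Corollaries~\ref{cor:tpp-fdp-inf} and~\ref{cor:tinf} to convergence that is uniform over $\lambda\in[\lambda_1,\lambda_2]$ and over $t$ in any compact set. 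Putting everything together, on an event of probability at least $1-\nu$, once $m\ge m'$ and $n\ge n'(m)$, $\fdphat(t^*(\lambda))\le q$ simultaneously for all $\lambda\in[\lambda_1,\lambda_2]$, hence $\that(\lambda)\le t^*(\lambda)$, while $\tpp(t^*(\lambda))>1-\nu$; monotonicity of $\tpp$ then gives $\inf_\lambda\tpp(\that(\lambda),\Pi_m,q)>1-\nu$.

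\textbf{Main obstacle.} The principal difficulty, I expect, is Step~3: turning the AMP convergence---naturally formulated for bounded continuous test functions at fixed $\lambda$---into a uniform-in-$(\lambda,t)$ statement for the \emph{discontinuous} functionals defining $\fdphat$ and $\tpp$. The smoothing/sandwich argument requires the limiting law of $W$ to have no atoms on the relevant range, and the uniformity in $\lambda$ is inherited from continuity of $(\alpha^*(\lambda),\tau^*(\lambda))$ established in Step~1---which is itself delicate, since it rests on the Donoho--Tanner transition condition keeping $\tau'_m(\lambda)$ bounded as $m\to\infty$.
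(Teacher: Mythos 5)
Your plan is sound and reaches the same endpoint as the paper's proof, sharing the central idea: identify the $m\to\infty$ limit $(\alpha'_m,\tau'_m)\to(\alpha'_\infty,\tau'_\infty)$ of the state evolution \eqref{eq:system-kf} under the growing-signal condition, then translate this into power approaching one. Where you genuinely diverge is in how the data-driven threshold $\that$ is controlled. The paper defines the asymptotic knockoff threshold $\that_m$ as the solution of $\widehat{\fdpinfty}^{\textnormal{LCD}}(t)=q$, shows $\that_m\to\that_\infty$ as $m\to\infty$ (restricting to $0<q<\frac{1-\epsilon}{1+\epsilon}$ so that $\that_\infty$ exists), and then, following the proof of Lemma~A.1 in \cite{su2017false} together with Theorem~\ref{thm:contrast}, argues that the empirical $\tpp(\lambda,\Pi_m,q)$ converges uniformly in $\lambda$ to $\P(\cdot\ge\that_m\mid\Pi_m\neq 0)$, which tends to $1$. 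Your sandwich argument --- choose a deterministic $t^*(\lambda)$ for which the $m\to\infty$ limit of $\widehat{\fdpinfty}^{\textnormal{LCD}}(t^*)$ is below $q/2$, deduce $\that\le t^*$ w.h.p.\ since $\that$ is the smallest $t$ with $\fdphat(t)\le q$, then invoke monotonicity of $t\mapsto\tpp(t)$ --- avoids having to identify or track the limit of $\that$ at all. This buys two things: it sidesteps the existence/uniqueness question for $\that_\infty$ (so the $q<\frac{1-\epsilon}{1+\epsilon}$ restriction, which the paper imposes silently in its proof but not in the theorem statement, is unnecessary under your route), and the uniform-in-$\lambda$ control only needs a one-sided bound on $\fdphat(t^*(\lambda))$ rather than full convergence of a random threshold. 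Two minor points worth tightening: the Donoho--Tanner condition you invoke to keep $\tau^*(\lambda)$ finite is correct but, like the $q$-restriction, is an implicit hypothesis also in the paper (acknowledged in the prose preceding the theorem but absent from the statement); and when sandwiching the indicator functionals by continuous test functions you should note explicitly that the limiting law of $W$ has atoms only at $0$, so $\P(W=\pm t^*)=0$ for $t^*>0$, which is exactly what makes the squeeze close.
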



\begin{remark}
The proof, which can be found in Appendix \ref{sec:proof-crefthm:break}, shows that this theorem continues to hold for the bridge-based knockoffs procedure that uses $W_j = |\widehat{\beta}_j(\gamma,\lambda)| - |\widehat{\beta}_{p+j}(\gamma,\lambda)|$ with $\gamma>1$. 
For this extension, the nominal level $q$ can take any value between 0 and 1 since the Donoho--Tanner phase transition does not occur for \eqref{eq:beta-bridge-kf} when $\gamma > 1$~\cite{weng2018overcoming}.
\end{remark}

\section{Tuning by cross-validation}\label{sec:cv}
%
%
%
%
The choice of $\lambda$ in the level-$q$ LCD-knockoffs procedure is critical. 
Unlike in the orthogonal $\bX$ situation, the value of $\lambda$ substantially affects the ranking of the variables, because $\lambda$ controls the shrinkage of the Lasso estimates. 
The advantage of the asymptotic theory is that it provides an analytic form for the relationship between $\lambda$ and the parameter $\tau$, so we can use this to characterize a good choice of $\lambda$ by its consequences on the value of $\tau$. 
Figure \ref{fig:tau_vs} below illustrates the dependence of $\tau$ on $\lambda$ for $\delta=1$ and different values of $\epsilon$. 
Here we can see clearly that the relationship is not monotone and that the choice $\lambda\approx 0$ (i.e., recovering the the Basis Pursuit criterion) as well as excessively large $\lambda$ would result in an inflation of the variance of estimates.

\smallskip
Turning to the formal analysis, let $\tppinfty^{\textnormal{LC}}(\lambda) \equiv \tppinfty^{\textnormal{LC}}(t(\lambda); \lambda)$, where $t(\lambda)$ is the smallest positive value such that $\fdpinfty^{\textnormal{LC}}(t(\lambda); \lambda)\leq q$. 
Then Theorem 3.2 in \cite{wang2017bridge} asserts that, for any $q$, 
$$
\lambda\ \text{maximizes } \tppinfty^{\textnormal{LC}}(\lambda)\ \iff\ \lambda\ \text{minimizes } \lim \frac{1}{p} \| \widehat{\beta} - \beta \|^2_2.
$$
In words, the value of $\lambda$ minimizing the asymptotic estimation mean squared error (MSE) is also the optimal $\lambda$ for the testing problem. 
\cite{wang2017bridge} then observe that minimizing the asymptotic MSE, $\EE(\eta_{\alpha\tau}(\Pi + \tau Z) - \Pi)^2$, is in turn equivalent to minimizing $\tau$ in \eqref{eq:system} over $\lambda$. 
Because the minimizer of $\tau$ depends on $\Pi$ and $\sigma$, \cite{wang2017bridge} propose to estimate $\lambda$ in practice by minimizing a consistent {\it estimate} of $\tau$.

%

\smallskip
If the only difference between knockoffs and the oracle were the fact that the augmented $X$-matrix is used instead of the original $X$-matrix, we would be able to conclude immediately that the optimal tuning parameter for LCD-knockoffs is the value of $\lambda$ minimizing $\tau$ in \eqref{eq:system-kf} instead of \eqref{eq:system}. 
This is, however, not the only difference, first because knockoffs use $W$-statistics instead of $\widehat{\beta}$, and secondly because knockoffs utilize an {\it estimate} of FDP instead of the actual FDP in setting the threshold. 
Admittedly, the {\it exact} value of $\lambda$ that is optimal for knockoffs no longer has such a simple characterization, but we can still advocate the $\lambda$ minimizing $\tau$ in \eqref{eq:system-kf} as a good approximation, and this is our target. 
Figure \ref{fig:cv} demonstrates that this approximation is indeed a good one. 

\smallskip
The value of $\lambda$ minimizing $\tau$ in \eqref{eq:system-kf} again depends on the unknown $\Pi$ and $\sigma$. 
To {\it estimate} it, instead of relying on a consistent estimator of $\tau$ as in \cite{wang2017bridge}, we propose to use cross-validation on the augmented design. 
This takes advantage of the fact that when the covariates are i.i.d., minimizing the estimation error is equivalent to minimizing the {\it prediction} error. 
Hence, from now on we write $\lambdacv$ for the $K$-fold cross-validation estimate of $\lambda$ operating on the augmented $X$-matrix. 
We can again predict the exact limit of $\lambdacv$ as follows. 

\begin{lemma}\label{lm:cv}
For fixed $\Pi$, let $\tau(\lambda;\delta)$ be the solution in $\tau$ to \eqref{eq:system-kf} as a function of $\lambda$ and $\delta$. 
Then $\lambdacv$ converges in probability to a constant, call it $\lambdacvinfty$. 
Furthermore, 
\begin{equation}
\lambdacvinfty = \argmin_{\lambda} \tau(\lambda; (K-1)\delta/K),
\end{equation}
where we note that minimizing $\tau$ in \eqref{eq:system-kf} for $\delta,\epsilon, \Pi^*$, is equivalent to minimizing $\tau$ in \eqref{eq:system} for $\delta/2,\epsilon/2, \Pi^*$.
%
\end{lemma}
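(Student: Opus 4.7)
The plan is to prove convergence of the $K$-fold cross-validation estimate $\lambdacv$ by an argmin continuous-mapping argument: show that the random CV objective $\widehat R_{\mathrm{cv}}(\lambda)$ converges in probability, uniformly on compact subsets of $(0,\infty)$, to a deterministic continuous function $R^\infty(\lambda)$ with a unique minimizer, and then identify that minimizer.

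First I would decompose $\widehat R_{\mathrm{cv}}(\lambda)=K^{-1}\sum_{k=1}^{K}\widehat R_{k}(\lambda)$, where $\widehat R_{k}(\lambda)=(K/n)\sum_{i\in I_{k}}\bigl(Y_{i}-[\bX,\ko]_{i}^{\top}\widehat\beta^{(-k)}(\lambda)\bigr)^{2}$ and $\widehat\beta^{(-k)}(\lambda)$ is the Lasso fit on the complement of the $k$th fold. Since the held-out rows are independent of $\widehat\beta^{(-k)}$ and have $\mathcal N(0,1/n)$ entries, conditional on the training data each squared residual has mean $\sigma^{2}+n^{-1}\|\widehat\beta^{(-k)}-\beta^{\mathrm{aug}}\|_{2}^{2}$ with $\beta^{\mathrm{aug}}=(\beta,0)$; a Chebyshev-type concentration argument (using fourth-moment bounds that accompany AMP estimates) then yields
\[
\widehat R_{k}(\lambda)=\sigma^{2}+\tfrac{1}{n}\|\widehat\beta^{(-k)}-\beta^{\mathrm{aug}}\|_{2}^{2}+o_{p}(1).
\]

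Next I would apply Theorem \ref{thm:contrast}, adapted to the training-fit Lasso, to pin down the limit of $n^{-1}\|\widehat\beta^{(-k)}-\beta^{\mathrm{aug}}\|_{2}^{2}$. The training design has $(K-1)n/K$ rows, $2p$ columns, and entries $\mathcal N(0,1/n)$; rescaling rows by $\sqrt{n/n_{\mathrm{train}}}$ produces a standard augmented AMP problem with aspect ratio $(K-1)\delta/K$ in the convention of \eqref{eq:system-kf}, to which Theorem \ref{thm:contrast} applies. Lifting that theorem from bounded continuous test functions to the quadratic $f(u,v,w)=(u-v)^{2}+w^{2}$ by a truncation plus uniform-integrability argument in the spirit of \cite{bayati2011dynamics}, and then undoing the rescaling, one identifies the limit of $\widehat R_{k}(\lambda)$ as an affine function of $\tau(\lambda;(K-1)\delta/K)^{2}$. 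Averaging over folds preserves the limit, and the uniform-in-$\lambda$ statement already contained in Theorem \ref{thm:contrast} propagates through these steps to give uniform convergence of $\widehat R_{\mathrm{cv}}(\lambda)$ on compact subsets of $(0,\infty)$.

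Uniqueness of the minimum of $\tau(\lambda;(K-1)\delta/K)$ over $\lambda$ is inherited from the analogous uniqueness established in Theorem 3.2 of \cite{wang2017bridge} for the single-column system \eqref{eq:system}, via the equivalence stated at the end of the lemma: expanding the mixture distribution of $\beta^{\mathrm{aug}}$ shows that \eqref{eq:system-kf} at parameters $(\delta,\epsilon,\Pi^{*})$ is literally \eqref{eq:system} at parameters $(\delta/2,\epsilon/2,\Pi^{*})$. The standard argmin continuous-mapping theorem then delivers $\lambdacv\stackrel{P}{\to}\lambdacvinfty=\argmin_{\lambda}\tau(\lambda;(K-1)\delta/K)$.

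The principal technical obstacle is the mismatched normalization under sub-sampling: the training-fold entries remain $\mathcal N(0,1/n)$ rather than the standard AMP normalization $\mathcal N(0,1/n_{\mathrm{train}})$, so invoking Theorem \ref{thm:contrast} requires a careful rescaling of both the coefficient vector and the penalty, and tracking the resulting multiplicative corrections through the state evolution to verify that $\argmin_{\lambda}$ of the CV-limit genuinely agrees with $\argmin_{\lambda}\tau(\lambda;(K-1)\delta/K)$. A secondary issue is the promotion of Theorem \ref{thm:contrast} from bounded continuous test functions to an unbounded quadratic; this is routine but demands the Lasso-AMP moment bounds of \cite{bayati2011dynamics} to secure the required uniform integrability.
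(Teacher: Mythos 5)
The paper itself gives no proof of Lemma~\ref{lm:cv}: Appendix~A proves only Theorems~\ref{thm:contrast} and~\ref{thm:break}, and Appendix~\ref{appdx:cv-amp} merely derives the CV-AMP equations characterizing the claimed minimizer, without establishing that $\lambdacv$ converges to it. So there is no paper proof to compare against, and your proposal has to stand or fall on its own. The outline you give --- $\widehat{R}_{\mathrm{cv}}(\lambda)$ decomposes into held-out prediction errors, each concentrates around $\sigma^2 + n^{-1}\|\widehat\beta^{(-k)}(\lambda)-\beta^{\mathrm{aug}}\|_2^2$, an AMP state evolution pins down the limit, and an argmin continuous-mapping step finishes --- is the natural route, and most of those steps are sound.

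However, there is a genuine gap at exactly the point you yourself flag as ``the principal technical obstacle,'' and flagging it is not the same as closing it. The sub-sampled training design has $n_{\mathrm{train}} = (K-1)n/K$ rows but entries still $\mathcal{N}(0,1/n)$, so column norms are $\approx \sqrt{(K-1)/K}$ rather than $\approx 1$. Rescaling to the standard AMP normalization replaces $\bX_{\mathrm{train}}$ by $\bX_{\mathrm{train}}\sqrt{K/(K-1)}$, which forces $\beta^{\mathrm{aug}} \mapsto \sqrt{(K-1)/K}\,\beta^{\mathrm{aug}}$ and $\lambda \mapsto \lambda\sqrt{K/(K-1)}$ while $\sigma$ is unchanged. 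Consequently the state evolution that actually governs the training Lasso is \eqref{eq:system-kf} at aspect ratio $(K-1)\delta/K$, penalty $\lambda\sqrt{K/(K-1)}$, and prior $\sqrt{(K-1)/K}\,\Pi$ --- not at the original $\Pi$. The scaling identity $\tau(c\lambda; c\Pi, c\sigma, \delta) = c\,\tau(\lambda;\Pi,\sigma,\delta)$ shows that rescaling $\Pi$ and $\lambda$ together while holding $\sigma$ fixed is \emph{not} an invariance of the system, so it is not immediate that the argmin of the CV-limit equals $\argmin_\lambda \tau(\lambda; (K-1)\delta/K)$ evaluated at the original $\Pi$ as the lemma claims. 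This step needs an explicit computation (or an argument that the discrepancy vanishes in the relevant limit), and neither your proposal nor the paper supplies one. A secondary omission: the argmin continuous-mapping argument requires uniform convergence over a set that contains the minimizer; you invoke uniformity on compacta of $(0,\infty)$, but one must also rule out the CV minimizer drifting to $0$ or $\infty$, e.g., by showing the limiting objective diverges (or is eventually dominated) near those boundaries.
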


How to obtain $\lambdacvinfty$ is not immediate from Lemma \ref{lm:cv}: for any value of $\lambda$, $\tau$ is itself given implicitly as the solution to an equation system in two variables, which then needs to be minimized over $\lambda$. 
We can nevertheless define a simple procedure for solving this minimization problem, described in Appendix \ref{appdx:cv-amp} and ultimately yielding the system of equations 
\begin{equation}\label{eq:amp_cv}
\begin{aligned}
&\taucv^2 = \sigma^2 + \frac{K}{(K-1)\delta} \E\left[\eta_{\alphacv\taucv}(\Pi + \taucv Z) - \Pi \right]^2 + \frac{K}{(K-1)\delta} \E [\eta_{\alphacv\taucv}(\taucv Z)]^2\\
&2\phi(\alphacv) - 2\alphacv\Phi(-\alphacv) = \E\left[Z +\alphacv; \Pi + \taucv Z < -\taucv\alphacv \right] - \E\left[Z -\alphacv; \Pi + \taucv Z > \taucv\alphacv \right].
\end{aligned}
\end{equation}
We call \eqref{eq:amp_cv} the CV-AMP equations. 
To obtain $\lambdacvinfty$, we solve the CV-AMP equations, and then use the second equation of \eqref{eq:system-kf} with $(K-1)\delta/K$ substituted for $\delta$ and with $\taucv$ substituted for $\tau$.

\smallskip
Figure \ref{fig:cv} shows power against $\lambda$ for the LCD-knockoffs procedure applied at level $q=0.1$. 
For reference, horizontal lines indicate theoretical power for the knockoffs procedure utilizing the Lasso-max statistic \eqref{eq:lasso-max} (computed on the augmented matrix). 
The latter is obtained from \cite{weinstein2017power} and uses ``counting" knockoffs with the true underlying value of $\epsilon$. 
For LCD, the theoretical predictions are consistent with the simulation results (marker overlays), and demonstrate how drastically power can vary with the choice of the tuning parameter. 
In particular, bad choices of $\lambda$ can lead to smaller power than even the knockoffs version of Lasso \eqref{eq:lasso-var-selection}. 
Vertical solid lines indicate the value of $\lambdacvinfty$, and they indeed seem close to optimal, i.e., close to the value that maximizes power. 
The broken vertical lines represent the simulation average for the $10$-fold cross-validation $\lambda$. 
In accordance with the right panel of Figure \ref{fig:tau_vs}, we can see that the optimal value of $\lambda$ decreases when $\epsilon$ increases. 

\begin{figure}[h]
  \centering
\includegraphics[width=.9\textwidth]{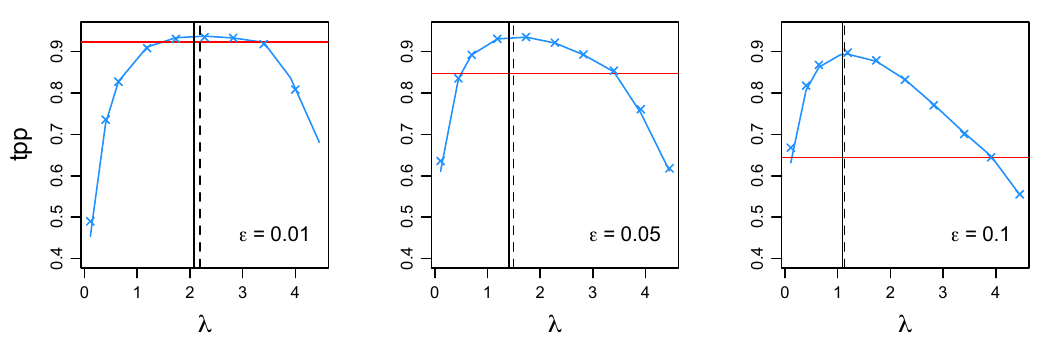} 
\caption{
Power versus $\lambda$ for the level-$q$ LCD-knockoffs procedure, $q=0.1$. 
Light blue curves are theoretical predictions for $\tpp$, marker overlays are averages over $N=100$ simulation runs with $\sigma = 1$, $n=p = 5000$, and $\Pi$ has mass $1-\epsilon$ at zero and mass $\epsilon$ at $M=5$ ($\epsilon$ varies between panels). 
Horizontal red lines indicate predicted TPP for the (``counting") knockoffs procedure using the Lasso-max statistic \eqref{eq:lasso-max}. 
The solid vertical line is the theoretical limit $\lambdacvinfty$, and the broken vertical line is the simulation average, for the cross-validation estimate of $\lambda$ with $K=10$ folds. 
}
\label{fig:cv}
\end{figure}

\smallskip
The boxplots in Figure \ref{fig:box} show sampling variability in $1000$ simulation runs for the cross-validation estimate of $\lambda$ and for the estimate of \cite{wang2017bridge}. 
In all panels we used $n=1000$, $p = 1500$, and $\Pi$ has mass $1-\epsilon = 0.9$ at zero and mass $\epsilon = 0.1$ at $M=5$. 
The red horizontal line indicates $\lambdacvinfty$ for $\delta = n/p=2/3$. 
Sampling variability for cross-validation appears smaller. 
Another (unrelated) advantage of cross-validation is that we have an explicit characterization of $\lambdacvinfty$ through the CV-AMP equations, whereas the analog for the method of \cite{wang2017bridge} is given implicitly as a minimizer of a certain estimate.

\begin{figure}
  \centering
\includegraphics[width=.9\textwidth]{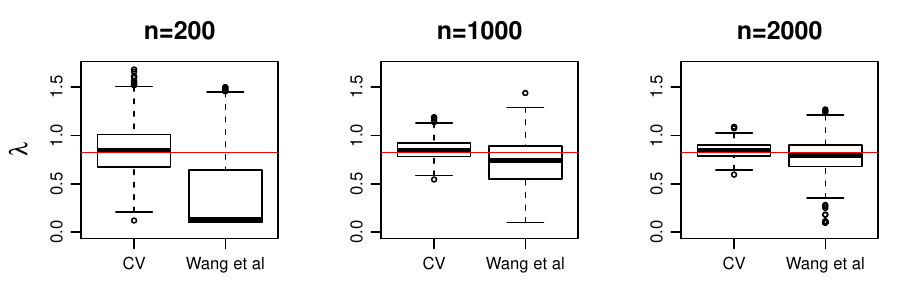} 
\caption{Sampling variability in estimating $\lambda$: CV versus the method of \cite{wang2017bridge}. 
Boxplots are based on $1000$ simulation runs. 
}
\label{fig:box}
\end{figure}

\section{Extension to Type S errors}\label{sec:extension-types}
The classical paradigm, which was also adopted here, regards a predictor as important if the corresponding $\beta_j\neq 0$, and aims at controlling a Type I error rate. 
In practice, however, it is almost always the case that all $\beta_j$ are different from zero to some decimal, in which case the Type I error trivially vanishes.  
In the more general context of multiple comparisons, this has lead to adamant objection to focusing on testing of point null hypotheses \cite{tukey1991philosophy, tukey1960conclusions}. 
A reasonable way out is to consider a predictor as important only if $|\beta_j| \geq \Delta$ for some $\Delta>0$, but this has the disadvantage that the definition depends on $\Delta$. 
Alternatively, Tukey \cite{tukey1991philosophy} advocated procedures that classify the {\it sign} of $\beta_j$ ``with confidence", that is, declare $\beta_j>0$ or $\beta_j<0$ for as many $j$ as possible while keeping small some rate of incorrect decisions on the sign. 
Incorrectly declaring that $\beta_j<0$ when in fact  $\beta_j>0$, or that $\beta_j>0$ when in fact  $\beta_j<0$, is commonly referred to as a {\it Type S} \cite{gelman2000type} or {\it Type III} error. 
For hypothesis testing problems of the type considered in this paper, it is natural to ask what consequences supplementing each rejection with a directional decision has on the error rate. 

\smallskip
As in \cite{barber2019knockoff}, suppose that for each `rejection' $j\in \widehat{\mathcal{S}}$ we further provide an estimate
$$
\widehat{\text{sign}}_j \in \{-1,1\}
$$
of the sign of $\beta_j$. 
We define the {\it false sign proportion} to be
\begin{equation*}
\fsp \equiv \frac{|\{j\in \widehat{\mathcal{S}}: \text{sign}(\beta_j)\neq \widehat{\text{sign}}_j\}|}{|\widehat{\mathcal{S}}|},
\end{equation*}
where $\text{sign}(x)$ is 1,-1 or 0 according as $x>0, x<0$ or $x=0$. 
In particular, we can see that
$$
\fdp\leq \fsp, 
$$
since any false discovery (i.e., selecting $j\in\widehat{\mathcal{S}}$ when in fact $\beta_j=0$) leads to a sign error, $ \text{sign}(\beta_j) = 0\neq \widehat{\text{sign}}_j$.
The false sign proportion may often be much higher than the false discovery proportion---indeed, as demonstrated in \cite{gelman2000type}, 
in a low signal-to-noise regime it is easy for a false discovery rate controlling procedure to have very high false sign rate.

\smallskip
In the sign-classification framework, we can apply our results to obtain exact asymptotic predictions of the FSP and a corresponding notion of power, for the knockoff procedures in our setting. 
Thus, write the nonzero component of the distribution of $\beta_j$ as 
$$
\Pi^* = \pi^{+}\Pi^{+} + \pi^{-}\Pi^{-},
$$
where $\pi^{+} + \pi^{-}=1$ and where $\P(\Pi^{+} > 0) = \P(\Pi^{-} < 0) = 1$. 
Now suppose that for the knockoffs version of the thresholded bridge selection procedure \eqref{eq:beta-bridge}, we further estimate $\widehat{\text{sign}}_j=\text{sign}(\widehat{\beta}_j)$ for each $j\in \widehat{\mathcal{S}}$, where $\widehat{\beta}_j = \widehat{\beta}(\gamma, \lambda)$. 
Taking the Lasso case $\gamma=1$ for example, we can apply Theorem \ref{thm:contrast_gen} to conclude that the procedure that supplements LCD knockoffs with the sign estimates has
\begin{equation}\label{eq:fsp}
\begin{aligned}
&\textnormal{fsp}(t) \equiv \lim \textnormal{FSP}(t) = &&\\[6pt]
& = \frac{
\epsilon \pi^{+} \P(\eta_{\alpha'\tau',1}(\Pi^{+}+\tau'Z)<-|\tau'\eta_{\alpha',1}(Z')|  -t) 
}{
\P(|\eta_{\alpha'\tau',1}(\Pi+\tau'Z)| - |\tau' \eta_{\alpha',1}(Z')|> t)} 
&&+ 
\frac{
\epsilon \pi^{-} \P(\eta_{\alpha'\tau',1}(\Pi^{-}+\tau'Z)> |\tau'\eta_{\alpha',1}(Z')|+ t)}{\P(|\eta_{\alpha'\tau',1}(\Pi+\tau'Z)| - |\tau' \eta_{\alpha',1}(Z')|> t)} && \\[8pt]
& && + \frac{(1-\epsilon) \P(|\tau' \eta_{\alpha',1}(Z)| - |\tau' \eta_{\alpha',1}(Z')|> t)}{\P(|\eta_{\alpha'\tau',1}(\Pi+\tau'Z)| - |\tau' \eta_{\alpha',1}(Z')|> t)}.\\[2pt]
\end{aligned}
\end{equation}

To quantify the power of a procedure in the sign problem, it may at first seem natural to consider the ratio of the number of correctly classified signs divided by the total number of nonzero $\beta_j$'s. 
But, in a regime where there are no exact zeros among the coefficients, this definition is not useful because the denominator will equal $p$, the total number of coefficients, although most (usually, almost all) of the coefficients are still too close to zero in magnitude to be picked up by the selection procedure. 


\smallskip
To overcome this difficulty, we consider a different model, where the distribution of $\beta_j$ is again a mixture between signals (the ``slab'') and nulls (the ``spike''), 
but now the null distribution is concentrated near zero instead of being a point mass at zero.
In particular, let $S_j=0$ and $S_j=1$ indicate if $\beta_j$ is considered to be a ``null'' or ``nonnull'' coefficient, respectively, and
assume the following distribution:
$$
\begin{aligned}
&\PP(S_j = 0) = 1-\epsilon \ \ \ \ \ \ \ &&\PP(S_j = 1) = \epsilon\\[5pt]
&\beta_j| S_j = 0 \ \sim \Pi_0 \ \ \ \ \ \ \ &&\beta_j | S_j = 1 \ \sim \Pi_1, 
\end{aligned}
$$
where, consistent with Tukey's viewpoint mentioned earlier, we will assume that neither of $\Pi_0, \Pi_1$ has point mass at zero, but $\Pi_0$ still represents a ``spike" and is 
concentrated near zero and $\Pi_1$ represents a ``slab" component of the mixture. 
Notice that this ``two group" model entails
\begin{equation}\label{eq:two-groups}
\Pi = (1-\epsilon)\Pi_0 + \epsilon\Pi_1
\end{equation}
as the distribution of $\beta_j$, analogous to \eqref{eq:Pi}. 
The {\it true sign proportion} is then defined as
\begin{equation}\label{eq:tsp-correction}
\tsp \equiv \frac{|\{j\in \mathcal{S}_1: \ \text{sign}(\beta_j)= \widehat{\text{sign}}_j\}|}{|\mathcal{S}_1|}, 
\end{equation}
where $\mathcal{S}_1\equiv \{j:S_j=1\}$. 

Appealing again to Theorem \ref{thm:contrast_gen}, the limit of TSP for the knockoffs sign-classification version of \eqref{eq:thresh-lasso} can be calculated as
\begin{equation}\label{eq:tsp-correction-lim}
\begin{aligned}
&\textnormal{tsp}(t) \equiv \lim \textnormal{TSP}(t) = &&\\[6pt]
& = \pi^+_1 \P(\eta_{\alpha'\tau',1}(\Pi^+_1+\tau'Z)<-|\tau'\eta_{\alpha',1}(Z')|  -t) + 
\pi^-_1 \P(\eta_{\alpha'\tau',1}(\Pi^-_1+\tau'Z) > |\tau'\eta_{\alpha',1}(Z')|  +t),
\end{aligned}
\end{equation}
where $\pi^+_1 = \P(\Pi_1>0), \pi^-_1 = \P(\Pi_1<0)$, and where $\Pi^+_1$ is the conditional distribution of $\Pi_1$ given $\Pi_1>0$ and $\Pi^-_1$ is the conditional distribution of $\Pi_1$ given $\Pi_1<0$. 

\medskip
We turn to discussing asymptotic FSP control under the assumption that $\Pi$ has no point mass at zero. 
Recall that by the definition we use for FSP, which subsumes incorrect rejections of zero coefficients, FSP is formally at least as large as FDP in any setting---but, in this specific
case, we will actually have FDP$\equiv 0$ since there are no exact zeros. 
Nonetheless, we will now show that in our new model~\eqref{eq:two-groups}
 that replaces exact zeros with {\it approximate} zeros, the Model X knockoffs at the nominal FDP level $q$ can control FSP at the level $q/2$. The
 factor of 2 is due to the fact that, in this new model, for any $\beta_j$ we can only err in one direction, while in the idealized model where nulls are exactly zero,
 estimating {\em either} a positive or negative value for $\hat\beta_j$ results in an error.

To show this formally, we will compare two different scenarios: first, we will consider the false {\em sign} rate 
under the model
$$
\beta_j\sim \Pi^{(1)}:= (1-\epsilon)\Pi_0 + \epsilon\Pi_1
$$
where there are no exact zeros as in~\eqref{eq:two-groups}, and second, we will consider the false {\em discovery} rate under the model
$$
\beta_j\sim \Pi^{(2)}:= (1-\epsilon)\delta_0 + \epsilon\Pi_1
$$
where now there {\em are} exact zeros as in~\eqref{eq:Pi}. To avoid confusion between these two distributions, we will write
$\textnormal{fsp}_{\Pi^{(1)}}(t)$ and $\textnormal{fdp}_{\Pi^{(2)}}(t)$ for these two quantities of interest, respectively, to emphasize that we are working with
two different distributions. 
Nevertheless, if the ``spike'' distribution $\Pi_0$ is concentrated extremely close to zero, then 
the two resulting data distributions are essentially indistinguishable, which is why we can compare the two.
Formally, below we will show that
$$
\textnormal{fsp}_{\Pi^{(1)}}(t) \leq \frac{0.5}{1-\epsilon}\cdot \textnormal{fdp}_{\Pi^{(2)}}(t),
$$
which is approximately $0.5\textnormal{fdp}_{\Pi^{(2)}}(t)$ if $\epsilon\approx 0$. 
To make the argument clearer, for the rest of this section we 
denote
$\beta \equiv \Pi$, $\hat{\beta}\equiv \eta_{\alpha'\tau',1}(\Pi + \tau'Z)$, and $\tilde{\beta}\equiv \tau'\eta_{\alpha',1}(Z)$. 
Then 
$$
\begin{aligned}
\textnormal{fsp}_{\Pi^{(1)}}(t) &= 
\frac{
\P_{\Pi^{(1)}}(\beta>0) \P_{\Pi^{(1)}}(\hat{\beta}<-|\tilde{\beta}| - t\big|\beta>0) + \P_{\Pi^{(1)}}(\beta<0) \P_{\Pi^{(1)}}(\hat{\beta} > |\tilde{\beta}| + t\big|\beta<0)
}{\P_{\Pi^{(1)}}(|\hat{\beta}| - |\tilde{\beta}|> t)}\\
&\leq 
\frac{
\P_{\Pi^{(1)}}(\beta>0) \P(\hat{\beta}<-|\tilde{\beta}| - t\big| \beta = 0) + \P_{\Pi^{(1)}}(\beta<0) \P(\hat{\beta} > |\tilde{\beta}| + t\big|\beta = 0)
}{\P_{\Pi^{(1)}}(|\hat{\beta}| - |\tilde{\beta}|> t)}\\
&= 
\frac{
0.5\cdot \P_{\Pi^{(1)}}(|\hat{\beta}| - |\tilde{\beta}|> t\big| \beta = 0)
}{\P_{\Pi^{(1)}}(|\hat{\beta}| - |\tilde{\beta}|> t)}  
\ \ 
=\frac{0.5}{1-\epsilon} \cdot \frac{(1-\epsilon)\P_{\Pi^{(1)}}(|\hat{\beta}| - |\tilde{\beta}|> t\big| \beta = 0)}{\P_{\Pi^{(1)}}(|\hat{\beta}| - |\tilde{\beta}|> t)}\\
&= \frac{0.5}{1-\epsilon} \cdot \frac{\P_{\Pi^{(2)}}(\beta=0)\cdot \P_{\Pi^{(2)}}(|\hat{\beta}| - |\tilde{\beta}|> t\big| \beta = 0)}{\P_{\Pi^{(2)}}(|\hat{\beta}| - |\tilde{\beta}|> t)} \cdot \frac{\P_{\Pi^{(2)}}(|\hat{\beta}| - |\tilde{\beta}|> t)}{\P_{\Pi^{(1)}}(|\hat{\beta}| - |\tilde{\beta}|> t)}\\
&= \frac{0.5}{1-\epsilon} \cdot  \textnormal{fdp}_{\Pi^{(2)}}(t) \cdot \frac{\P_{\Pi^{(2)}}(|\hat{\beta}| - |\tilde{\beta}|> t)}{\P_{\Pi^{(1)}}(|\hat{\beta}| - |\tilde{\beta}|> t)} \leq \frac{0.5}{1-\epsilon} \cdot  \textnormal{fdp}_{\Pi^{(2)}}(t),
\end{aligned}
$$
where in the last step we observe that $\P_{\Pi^{(1)}}(|\hat{\beta}| - |\tilde{\beta}|> t)\leq \P_{\Pi^{(2)}}(|\hat{\beta}| - |\tilde{\beta}|> t)$ for any $\Pi_0$. 
Notice that, since $t$ was arbitrary, this holds also 
 for the asymptotic knockoff threshold $\hat{t}^{\infty}_{\Pi^{(1)}}$.
 Moreover, observe that  by continuity of the formula 
for $\hat{t}^{\infty}$, $\hat{t}^{\infty}_{\Pi^{(1)}}\rightarrow \hat{t}^{\infty}_{\Pi^{(2)}}$ when the ''null'' distribution $\Pi_0$ converges to the point mass at zero.
Thus, for $\epsilon<0.5$ and $\Pi_0$ sufficiently concenrated around 0 it holds
$$
\textnormal{fsp}_{\Pi^{(1)}}\left(\hat{t}^{\infty}_{\Pi^{(1)}}\right)\leq   \textnormal{fdp}_{\Pi^{(2)}}\left(\hat{t}^{\infty}_{\Pi^{(2)}}\right)\leq q\;\;,$$ 
which allows to conclude that knockoffs allow for the asymptotic FSP control under $\Pi^{(1)}$.
\color{black}
%
%


\begin{figure}
  \centering
\includegraphics[scale=.5]{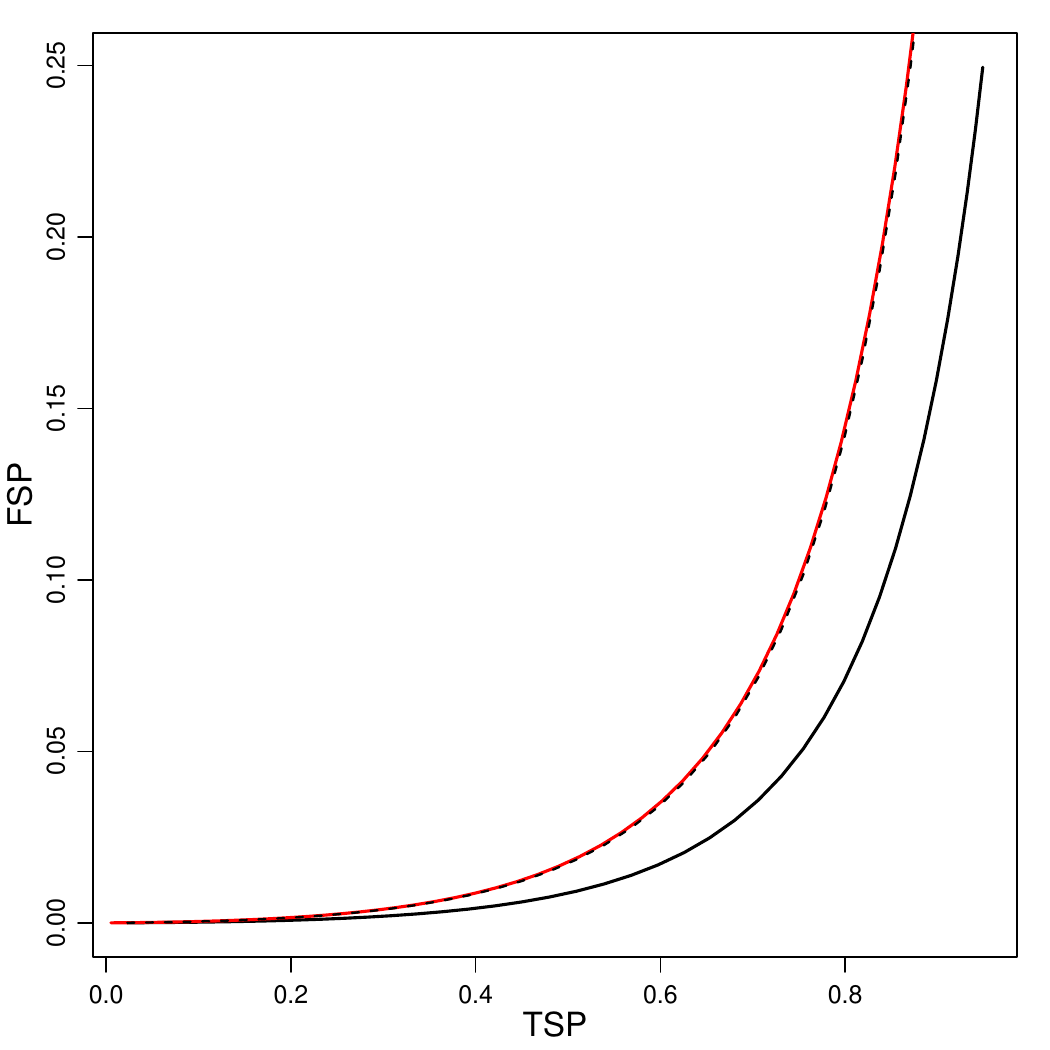}
\caption{
Asymptotic predictions for FSP against TSP in a setting that parallels that of Figure \ref{fig:takeaway}. 
The curved as explained in the main text. 
}
\label{fig:fsp}
\end{figure}

\smallskip
In fact, when $\Pi_0$ is sufficiently concentrated around zero, and $\Pi_1$ sufficiently dispersed, the formulas above will imply
\begin{equation}\label{eq:fsp-vs-fdp}
\textnormal{fsp}_{\Pi^{(1)}}(t) \approx 0.5 \cdot \textnormal{fdp}_{\Pi^{(2)}}(t). 
\end{equation}
%
Figure \ref{fig:fsp} shows the parametric curve $(\textnormal{tsp}_{\Pi^{(1)}}(t), \textnormal{fsp}_{\Pi^{(1)}}(t))$ for a setting with $\delta = 1, \epsilon = 0.1$, $\Pi_0 = \mathcal{N}(0,0.01)$, and $\Pi_1$ is point mass at $M=4.3$. 
The red curve in the figure shows 
$\textnormal{fdp}_{\Pi^{(2)}}(t)$ vs.~$\textnormal{tdp}_{\Pi^{(2)}}(t)$, 
i.e., this is the curve from Figure \ref{fig:takeaway}. 
The relationship \eqref{eq:fsp-vs-fdp} is verified by plotting (dotted black line) the curve $(\textnormal{tsp}_{\Pi^{(1)}}(t), 2\cdot \textnormal{fsp}_{\Pi^{(1)}}(t))$, which essentially coincides with the red curve.

\section*{Acknowledgement}

A.~W.~is supported by ISF via grant 039-9325. 
W.~J.~S.~is partially supported by NSF via grant CCF-1934876, and by the Wharton Dean's Research Fund. 
M.~B.~is supported by the Polish National Center of Science via grant 2016/23/B/ST1/00454. 
R.~F.~B.~is supported by NSF via grant DMS-1654076, and by the Office of Naval Research via grant N00014-20-1-2337. 
E.~C.~is partially supported by NSF via grants DMS 1712800 and DMS 1934578. 

\bibliographystyle{abbrv}
\bibliography{ref}


\appendix

\newcommand{\rina}[1]{\textcolor{teal}{[RFB: #1]}}
\newcommand{\dgoto}{\stackrel{\P}{\longrightarrow}}
\newcommand\ef{f^{\infty}}
\renewcommand{\Longrightarrow}{\stackrel{\P}{\longrightarrow}}

\section{Proofs}
\label{sec:proofs}

We prove Theorems~\ref{thm:contrast_gen} and \ref{thm:break} in this appendix. The proofs rely heavily on some extensions of AMP theory and approximation results for continuous functions, which we first present in Section~\ref{sec:local-amp-lemma} and the beginning of Section~\ref{sec:proof-theorem}, respectively.

\subsection{Local AMP lemmas}
\label{sec:local-amp-lemma}

Following the setting of AMP theory as specified earlier in Section~\ref{sec:section2}, we present some extensions of AMP theory for the Lasso method. We call these results \textit{local} AMP lemmas because these results apply to a subset of the coordinates of the coefficients, unlike the existing AMP results which apply to the entire set of coordinates.


Throughout this appendix, we use $\Longrightarrow$ to denote convergence in probability. 
For simplicity, we also denote $\widehat\beta_{j,\gamma}\equiv \widehat{\beta}_j(\gamma,\lambda)$. 
Recall that $\alpha', \tau'$ are the unique solutions to the set of equations~\eqref{eq:system-kf-bridge}.

\begin{lemma}\label{lm:factored}
Let $g: \R^2 \goto \R$ and $h: \R \goto \R$ be two bounded continuous functions. We have
\[
\frac1{p}\sum_{i=1}^p g(\widehat\beta_{i,\gamma}, \beta_i) h(\widehat\beta_{p+i, \gamma}) \Longrightarrow \E \left[ g(\eta_{\alpha'\tau'^{2-\gamma}, \gamma}(\Pi + \tau' Z), \Pi) \right] \cdot \E \left[h(\tau' \eta_{\alpha', \gamma}(Z)) \right].
\]

\end{lemma}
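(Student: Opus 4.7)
My strategy reduces the ``diagonal'' pair sum to a product of marginal sums by exploiting a conditional exchangeability of the knockoff coordinates. Two ingredients suffice: (a) the marginal limits $\tfrac{1}{p}\sum_{i=1}^p g(\widehat\beta_i,\beta_i) \Longrightarrow c_g$ and $\tfrac{1}{p}\sum_{i=1}^p h(\widehat\beta_{p+i}) \Longrightarrow c_h$, where $c_g$ and $c_h$ denote the two expectations on the right-hand side of the lemma; and (b) exchangeability of $(\widehat\beta_{p+1},\dots,\widehat\beta_{2p})$ conditional on $\mathcal{F}_p := \sigma(\widehat\beta_1,\dots,\widehat\beta_p,\beta)$.

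For (a), I would apply standard AMP to the augmented linear system $Y = [\bX,\ko](\beta^{\top},0^{\top})^{\top} + \xi$, which has effective sampling ratio $\delta/2$ and an extended signal whose empirical distribution converges to $\tfrac{1}{2}\Pi + \tfrac{1}{2}\delta_0$; a direct computation verifies that its AMP fixed-point system is exactly~\eqref{eq:system-kf}, so the relevant state parameters are $(\alpha',\tau')$. For (b), recall that $\ko$ has i.i.d.\ $\mathcal{N}(0,1/n)$ columns drawn independently of $(\bX,Y,\beta)$; combining this with the permutation-equivariance of the Lasso shows that for every permutation $\pi$ of $\{1,\dots,p\}$,
\[
(\widehat\beta_1,\dots,\widehat\beta_p,\widehat\beta_{p+\pi(1)},\dots,\widehat\beta_{p+\pi(p)},\beta) \stackrel{d}{=} (\widehat\beta_1,\dots,\widehat\beta_{2p},\beta),
\]
which is precisely the desired conditional exchangeability.

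With these ingredients, set $G_i := g(\widehat\beta_i,\beta_i)$, $H_i := h(\widehat\beta_{p+i})$, $B_p := \tfrac{1}{p}\sum_i G_i H_i$, and $S_p := \tfrac{1}{p}\sum_i H_i$. By (b), $\E[H_i \mid \mathcal{F}_p]$ is constant in $i$ and equal to $\E[S_p\mid\mathcal{F}_p]$, so
\[
\E[B_p\mid\mathcal{F}_p] \;=\; \E[S_p\mid\mathcal{F}_p]\cdot \tfrac{1}{p}\sum_i G_i,
\]
and bounded convergence together with (a) yields $\E[B_p\mid\mathcal{F}_p]\Longrightarrow c_g c_h$. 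For the conditional variance, (b) gives a common pairwise conditional covariance $V_p := \mathrm{Cov}(H_i,H_j\mid \mathcal{F}_p)$ for $i\ne j$; the identity $\mathrm{Var}(S_p\mid\mathcal{F}_p) = U_p/p + (1-1/p)V_p \ge 0$ with $U_p := \mathrm{Var}(H_i\mid\mathcal{F}_p)$, combined with $L^2$ convergence $S_p \to c_h$, forces $\E V_p \to 0$ while the lower bound $V_p \ge -\|h\|_\infty^2/(p-1)$ forces $\E|V_p|\to 0$. Since $\mathrm{Var}(B_p\mid\mathcal{F}_p) \le \|g\|_\infty^2\|h\|_\infty^2/p + \|g\|_\infty^2 |V_p|$, conditional Chebyshev delivers $B_p - \E[B_p\mid\mathcal{F}_p]\Longrightarrow 0$, and combining with the conditional-mean limit completes the proof.

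The main technical hurdle I expect lies in ingredient (a). Standard AMP results are phrased as averages over all $2p$ coordinates of the augmented problem, and because $\beta^{\mathrm{ext}}_i$ alone cannot distinguish a null original coordinate from a knockoff coordinate, extracting the two marginal limits restricted to only the original or only the knockoff indices requires a refinement of AMP that is sensitive to the coordinate's ``type.'' I anticipate handling this either by a local AMP lemma tested against a smooth indicator of type, or by an independent symmetrization argument mirroring (b) applied separately to the two halves of the augmented design.
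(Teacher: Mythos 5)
Your proposal is essentially the same as the paper's: both rest on (a) the two marginal limits restricted to the original block and the knockoff block, and (b) a permutation symmetry of the knockoff coordinates that factors the diagonal sum into a product. Your exchangeability argument is correct (permuting the columns of $\ko$ leaves the first $p$ Lasso coordinates fixed and permutes the last $p$, while $\ko$'s law is permutation-invariant), and it is precisely the symmetry the paper invokes when it replaces $\widehat\beta_{p+i}$ by $\widehat\beta_{p+\pi(i)}$ with $\pi$ uniform. The small difference is in the conditioning and the variance bound: the paper conditions on the $G_i$'s together with the \emph{empirical distribution} of the $H_i$'s, which fixes the multiset of $H$-values so that the conditional variance is given exactly by Lemma~\ref{lm:ab_perm}; you condition on the smaller $\sigma$-algebra $\mathcal{F}_p$ and instead deduce $\E|V_p|\to 0$ from the nonnegativity of the conditional variance of $S_p$ together with its $L^2$ convergence. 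Both routes yield an $o(1)$ conditional variance and the same conclusion.

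On ingredient (a), which you rightly flag as the main technical hurdle: the paper's Lemmas~\ref{lm:part_amp} and~\ref{lm:p_part} do exactly what you anticipate. Lemma~\ref{lm:part_amp} uses the type-sensitive test function $f_a(x,y)=f(x)(1-|y|/a)_+$ (your ``smooth indicator of type''), applies the standard $2p$-coordinate AMP with effective ratio $\delta/2$ and signal $\frac12\Pi + \frac12\delta_0$ (confirming your identification of \eqref{eq:system-kf}), and sends $a\to 0$; the one piece your sketch does not mention is that this only isolates the sum over all indices with $\beta^{\mathrm{ext}}_i=0$ (true nulls \emph{and} knockoffs), and an additional exchangeability step (Lemma~\ref{lm:exchangeable}) is needed to pass from this extended null set to the $p$ knockoff coordinates alone. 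Lemma~\ref{lm:p_part} then recovers the original-block limit by subtraction. So your plan is correct, but the passage from ``null block'' to ``knockoff block'' needs the extra exchangeability argument to be complete.
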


Lemma~\ref{lm:factored} is the main contribution of this subsection. Its proof relies on the following three lemmas and we defer the proofs of these preparatory lemmas later in this subsection.

\begin{lemma}\label{lm:part_amp}
Let $f: \R \goto \R$ be any bounded continuous function. We have
\[
\frac1{p}\sum_{i=1}^p f(\widehat\beta_{p+i, \gamma}) \Longrightarrow \E f(\tau'\eta_{\alpha', \gamma}(Z)).
\]
\end{lemma}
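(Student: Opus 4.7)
The plan is to deduce the lemma from the Bayati--Montanari (BM) AMP theorem applied to the \emph{augmented} Lasso \eqref{eq:lasso-kf}, combined with an exchangeability argument that singles out the knockoff indices among all null coordinates.

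First, regard \eqref{eq:lasso-kf} as an i.i.d.\ Gaussian Lasso problem in its own right: design $[\bX,\ko]\in\R^{n\times 2p}$ with i.i.d.\ $\N(0,1/n)$ entries, aspect ratio $\delta/2$, and true coefficient vector $\gamma:=(\beta^\top,\bm{0}^\top)^\top$ whose empirical distribution converges weakly to $\Pi_{\mathrm{aug}}:=\tfrac12\Pi+\tfrac12\delta_0$. A direct calculation shows that plugging aspect ratio $\delta/2$ and signal $\Pi_{\mathrm{aug}}$ into the BM state evolution reduces to \eqref{eq:system-kf}---the mixing splits each expectation into a ``$\Pi$'' piece and a ``$0$'' piece, with the $1/2$'s cancelling the factor $2$ from $\delta_{\mathrm{aug}}=\delta/2$---so its unique fixed point is the same $(\alpha',\tau')$. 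Hence, for every bounded continuous $\psi:\R^2\to\R$,
\begin{equation}\label{eq:bm-aug-plan}
\frac{1}{2p}\sum_{j=1}^{2p}\psi(\widehat\beta_j,\gamma_j)\Longrightarrow\E\psi\bigl(\eta_{\alpha'\tau'}(\Gamma+\tau' Z),\Gamma\bigr),\qquad\Gamma\sim\Pi_{\mathrm{aug}}.
\end{equation}

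Next, extract the sum over the ``all-null'' coordinates $\mathcal{J}_0:=\{j:\gamma_j=0\}=\{j\le p:\beta_j=0\}\cup\{p+1,\ldots,2p\}$, which by the law of large numbers satisfies $|\mathcal{J}_0|/(2p)\to 1-\epsilon/2$. Apply \eqref{eq:bm-aug-plan} with $\psi_\eta(x,y)=f(x)g_\eta(y)$, where $g_\eta:\R\to[0,1]$ is a continuous tent with $g_\eta(0)=1$ and $\supp g_\eta\subseteq[-\eta,\eta]$. The left side then equals $(2p)^{-1}\sum_{j\in\mathcal{J}_0}f(\widehat\beta_j)+(2p)^{-1}\sum_{j\notin\mathcal{J}_0}f(\widehat\beta_j)g_\eta(\gamma_j)$, and the second term is bounded in absolute value by $\|f\|_\infty\,|\{j:0<|\gamma_j|\le\eta\}|/(2p)$, whose limit is at most $\|f\|_\infty(\epsilon/2)\P(0<|\Pi^*|\le\eta)\downarrow 0$ as $\eta\downarrow 0$ (using $\P(\Pi^*=0)=0$). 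On the right, $\E[f(\eta_{\alpha'\tau'}(\Gamma+\tau' Z))g_\eta(\Gamma)]\to(1-\epsilon/2)\E f(\eta_{\alpha'\tau'}(\tau' Z))$ as $\eta\downarrow 0$. Combining and dividing by $|\mathcal{J}_0|/(2p)$ yields
\begin{equation}\label{eq:null-avg-plan}
\frac{1}{|\mathcal{J}_0|}\sum_{j\in\mathcal{J}_0} f(\widehat\beta_j)\Longrightarrow \E f\bigl(\eta_{\alpha'\tau'}(\tau' Z)\bigr).
\end{equation}

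The hard step is to pass from \eqref{eq:null-avg-plan} to the sub-average over just the knockoff indices $\{p+1,\ldots,2p\}\subseteq\mathcal{J}_0$. The key is exchangeability: condition on $\beta$, $Y$, and the nonnull columns $\{\bX_j:\beta_j\ne 0\}$; the remaining $|\mathcal{J}_0|$ columns---null originals together with all knockoffs---are conditionally i.i.d.\ $\N(0,I/n)$, so Lasso equivariance under column relabeling makes $(\widehat\beta_j)_{j\in\mathcal{J}_0}$ conditionally exchangeable, hence exchangeable. Introduce a uniformly random size-$p$ subset $S\subseteq\mathcal{J}_0$ drawn independently of the data; exchangeability gives
\[
\frac{1}{p}\sum_{j\in S} f(\widehat\beta_j)\;\stackrel{d}{=}\;\frac{1}{p}\sum_{i=1}^p f(\widehat\beta_{p+i}).
\]
Conditionally on $(\widehat\beta_j)_{j\in\mathcal{J}_0}$ and $\mathcal{J}_0$, the random-subset average has conditional mean equal to the all-null average \eqref{eq:null-avg-plan} and conditional variance $O(1/p)$, so it converges in probability to $\E f(\eta_{\alpha'\tau'}(\tau' Z))$; the distributional identity above then transfers this convergence to $\tfrac1p\sum_i f(\widehat\beta_{p+i})$ (using that convergence in distribution to a constant coincides with convergence in probability). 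An alternative way to close this final step is a direct second-moment computation: exchangeability reduces $\Var\bigl(\tfrac1p\sum_i f(\widehat\beta_{p+i})\bigr)$ to $O(1/p)$ plus a single covariance between two distinct knockoff coordinates, which vanishes asymptotically by the BM pair-independence result (Corollary~1 in \cite{bayati2011dynamics}) referenced after Theorem~\ref{thm:contrast}.
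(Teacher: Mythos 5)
Your route is essentially the same as the paper's: apply Bayati--Montanari asymptotic separability to the augmented Lasso (whose state evolution at aspect ratio $\delta/2$ with signal prior $\tfrac12\Pi+\tfrac12\delta_0$ collapses to \eqref{eq:system-kf}), isolate the null coordinates via a tent multiplier in the second argument, and then pass from the all-null average to the knockoff-only sub-average by exchangeability. The last step is exactly what the paper packages as Lemma~\ref{lm:exchangeable}, whose proof is the same random-subsampling plus conditional-variance argument you describe; your verification that the null-and-knockoff columns are conditionally i.i.d.\ given $\beta$, $Y$, and the nonnull columns is the right justification for that exchangeability.

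The one genuine gap is your opening display: you invoke Bayati--Montanari for \emph{all bounded continuous} test functions $\psi$, but Theorem~1.5 of \cite{bayati2012} applies to pseudo-Lipschitz test functions. Your $\psi_\eta(x,y)=f(x)g_\eta(y)$ is bounded and continuous, yet need not be (pseudo-)Lipschitz when $f$ is merely bounded continuous, so the cited theorem does not directly yield your first display. The paper addresses exactly this point at the start of its proof: it first reduces to $f$ bounded \emph{Lipschitz} (approximating a general bounded continuous $f$ by a compactly supported Lipschitz function, with the tail controlled by the tightness estimate Lemma~\ref{lm:boundedbeta}), and then uses the explicit Lipschitz tent $\max\{0,1-|y|/a\}$ so that the product $f_a(x,y)=f(x)\max\{0,1-|y|/a\}$ is verifiably Lipschitz before applying Bayati--Montanari. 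You should either carry out this reduction up front, or state and justify the extension of the BM result from pseudo-Lipschitz to bounded continuous test functions; as written, your starting point is asserted rather than derived from the cited theorem.
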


\begin{lemma}\label{lm:p_part}
Let $f: \R^2 \goto \R$ be any bounded bivariate continuous function. We have
\[
\frac1{p}\sum_{i=1}^p f(\widehat\beta_{i, \gamma}, \beta_{i}) \Longrightarrow \E f(\eta_{\alpha'\tau'^{2-\gamma}, \gamma}(\Pi+\tau' Z), \Pi).
\]
\end{lemma}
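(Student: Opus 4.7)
The plan is to derive Lemma~\ref{lm:p_part} from the standard Bayati--Montanari AMP theorem applied to the \emph{augmented} Lasso~\eqref{eq:lasso-kf}, combined with the already-established Lemma~\ref{lm:part_amp}. Viewed as an ordinary Lasso, \eqref{eq:lasso-kf} has Gaussian design $[\bX,\ko]\in\R^{n\times 2p}$ with i.i.d.\ $\N(0,1/n)$ entries, noise $\xi$, and ``true'' coefficient vector $\beta^{\mathrm{aug}}:=(\beta_1,\ldots,\beta_p,0,\ldots,0)\in\R^{2p}$. The effective aspect ratio is $n/(2p)\to\delta/2$, and the empirical distribution of $\beta^{\mathrm{aug}}$ converges almost surely to the mixture
\[
\Pi^{\mathrm{aug}}:=\tfrac12\,\Pi+\tfrac12\,\delta_0,
\]
which has finite second moment. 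Applying the AMP empirical-distribution theorem (the joint-distribution version underlying \eqref{eq:amp}) to this problem yields, for every bounded continuous $f:\R^2\to\R$,
\[
\frac1{2p}\sum_{i=1}^{2p}f\bigl(\widehat\beta_i,\beta^{\mathrm{aug}}_i\bigr)\Longrightarrow \E f\bigl(\eta_{\alpha^\star\tau^\star}(\Pi^{\mathrm{aug}}+\tau^\star Z),\,\Pi^{\mathrm{aug}}\bigr),
\]
where $(\alpha^\star,\tau^\star)$ solves the standard AMP calibration with parameters $(\Pi^{\mathrm{aug}},\delta/2,\sigma^2,\lambda)$. A direct computation -- splitting the two integrals $\E[\eta_{\alpha\tau}(\Pi^{\mathrm{aug}}+\tau Z)-\Pi^{\mathrm{aug}}]^2$ and $\P(|\Pi^{\mathrm{aug}}+\tau Z|>\alpha\tau)$ into their $\Pi$- and $\delta_0$-contributions -- shows that this calibration system is exactly \eqref{eq:system-kf}, so $(\alpha^\star,\tau^\star)=(\alpha',\tau')$.

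From here the lemma follows by peeling off the knockoff half. Split the left-hand sum at $i=p$ (using $\beta^{\mathrm{aug}}_i=\beta_i$ for $i\le p$ and $\beta^{\mathrm{aug}}_i=0$ for $i>p$), multiply through by $2$, and decompose the limiting expectation according to the mixture,
\[
\E f\bigl(\eta_{\alpha'\tau'}(\Pi^{\mathrm{aug}}+\tau' Z),\Pi^{\mathrm{aug}}\bigr)=\tfrac12\E f\bigl(\eta_{\alpha'\tau'}(\Pi+\tau' Z),\Pi\bigr)+\tfrac12\E f\bigl(\eta_{\alpha'\tau'}(\tau' Z),0\bigr),
\]
to obtain
\[
\frac1p\sum_{i=1}^p f(\widehat\beta_i,\beta_i)+\frac1p\sum_{i=1}^p f(\widehat\beta_{p+i},0)\Longrightarrow \E f(\eta_{\alpha'\tau'}(\Pi+\tau' Z),\Pi)+\E f(\eta_{\alpha'\tau'}(\tau' Z),0).
\]
Define the bounded continuous univariate function $g(x):=f(x,0)$; then the second sum on the left equals $\tfrac1p\sum_{i=1}^p g(\widehat\beta_{p+i})$, which by Lemma~\ref{lm:part_amp} converges in probability to $\E g(\eta_{\alpha'\tau'}(\tau' Z))=\E f(\eta_{\alpha'\tau'}(\tau' Z),0)$. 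Subtracting delivers the claim.

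The main obstacle is justifying the invocation of the AMP empirical-distribution theorem used in the first step, since the ``true'' coefficient sequence $\beta^{\mathrm{aug}}$ is not i.i.d.\ (its last $p$ entries are deterministically zero), and because the Bayati--Montanari statement is typically phrased for pseudo-Lipschitz rather than merely bounded continuous test functions. The non-i.i.d.\ issue will be handled by conditioning on $\beta$: conditionally $\beta^{\mathrm{aug}}$ is deterministic, its empirical distribution still converges a.s.\ to $\Pi^{\mathrm{aug}}$, and its second moment is uniformly bounded, so the deterministic-sequence version of the AMP theorem applies, and integrating out $\beta$ then produces the unconditional in-probability statement. The bounded-continuous versus pseudo-Lipschitz gap can be bridged in the usual way by truncation and uniform approximation on compacta, using the uniform second-moment control on both $\widehat\beta_i$ and $\beta_i$ to discard the tails.
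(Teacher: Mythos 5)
Your proposal is correct and follows essentially the same route as the paper: apply the Bayati--Montanari AMP theorem to the augmented Lasso with effective prior $\widetilde\Pi = \frac12\Pi + \frac12\delta_0$ and aspect ratio $\delta/2$ (whose state-evolution equations reduce to \eqref{eq:system-kf}), split the $2p$-term sum into the original and knockoff halves, invoke Lemma~\ref{lm:part_amp} to identify the limit of the knockoff half, and subtract. The paper handles the bounded-continuous-to-Lipschitz reduction by the same truncation argument you sketch, referring back to the proof of Lemma~\ref{lm:part_amp}.
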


\begin{lemma}\label{lm:ab_perm}
For any numbers $A_1, \ldots, A_p$ and $B_1, \ldots, B_p$, denote by $\overbar A$ and $\overbar B$ their respective means. Let $\pi$ be drawn from all permutations of $1, \ldots, p$ uniformly at random. Then, we have
\[
\Var(A_1 B_{\pi(1)} + \cdots + A_p B_{\pi(p)}) = \frac{\left[\sum_{l=1}^p (B_l - \overbar B)^2 \right] \left[\sum_{l=1}^p (A_l - \overbar A)^2 \right]}{p-1}.
\]
\end{lemma}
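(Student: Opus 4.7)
The plan is to reduce to the centered case and then compute the second moment directly using the elementary joint distribution of a pair of entries chosen by a uniform random permutation.

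First I would write $A_i = A_i' + \overbar A$ and $B_i = B_i' + \overbar B$, with $A_i' = A_i - \overbar A$ and $B_i' = B_i - \overbar B$, so that $\sum_i A_i' = \sum_i B_i' = 0$. Since $\sum_i B_{\pi(i)}' = \sum_i B_i' = 0$ deterministically for every $\pi$, expanding the product gives
\[
\sum_{i=1}^p A_i B_{\pi(i)} = \sum_{i=1}^p A_i' B_{\pi(i)}' + p\,\overbar A\,\overbar B,
\]
so the variance in question equals $\Var\bigl(\sum_i A_i' B_{\pi(i)}'\bigr)$; this is the substantive step, because it strips the problem down to centered sequences.

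Next I would compute the required moments using the symmetry of $\pi$. For a single index, $\E[B_{\pi(i)}'] = \tfrac{1}{p}\sum_l B_l' = 0$, so $\E\bigl[\sum_i A_i' B_{\pi(i)}'\bigr] = 0$. For a pair of indices $(i,j)$, when $i=j$ we have $\E[(B_{\pi(i)}')^2] = \tfrac{1}{p}\sum_l (B_l')^2$, and when $i\ne j$, by the exchangeability of $(\pi(i),\pi(j))$ over distinct ordered pairs,
\[
\E\bigl[B_{\pi(i)}' B_{\pi(j)}'\bigr] = \frac{1}{p(p-1)}\sum_{l\ne m} B_l' B_m' = \frac{1}{p(p-1)}\Bigl[\Bigl(\sum_l B_l'\Bigr)^2 - \sum_l (B_l')^2\Bigr] = -\frac{1}{p(p-1)}\sum_l (B_l')^2.
\]

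Finally I would assemble
\[
\Var\Bigl(\sum_i A_i' B_{\pi(i)}'\Bigr) = \sum_{i,j} A_i' A_j'\, \E[B_{\pi(i)}' B_{\pi(j)}'],
\]
split according to $i=j$ versus $i\ne j$, and use the identity $\sum_{i\ne j} A_i' A_j' = -\sum_i (A_i')^2$. The two pieces then combine with coefficient $\tfrac{1}{p} + \tfrac{1}{p(p-1)} = \tfrac{1}{p-1}$, and the result is exactly $\tfrac{1}{p-1}\bigl[\sum_l (B_l')^2\bigr]\bigl[\sum_i (A_i')^2\bigr]$, which matches the claim since $\sum_l (B_l - \overbar B)^2 = \sum_l (B_l')^2$ and similarly for $A$. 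There is no real obstacle here beyond careful sign bookkeeping; the lemma is a distribution-free combinatorial identity analogous to the variance formula for simple random sampling without replacement.
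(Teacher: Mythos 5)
Your proof is correct and follows essentially the same route as the paper's: both compute the variance by expanding a double sum over index pairs and using the two moments $\E[B_{\pi(i)}^2]$ and $\E[B_{\pi(i)}B_{\pi(j)}]$ (equivalently, $\Var(B_{\pi(i)})$ and $\Cov(B_{\pi(i)},B_{\pi(j)})$) under the uniform random permutation. The only stylistic difference is that you center the $A_i$'s at the outset, which lets you work with second moments directly and avoids a small amount of algebra in the final assembly, whereas the paper carries the uncentered $A_i$'s through and simplifies $\sum_i A_i^2/p - 2\sum_{i<j}A_iA_j/(p(p-1))$ to $\sum_l(A_l-\overbar A)^2/(p-1)$ at the end.
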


\begin{proof}[Proof of Lemma \ref{lm:factored}]

By Lemma \ref{lm:part_amp}, we have
\[
\frac1{p}\sum_{i=1}^p   h(\widehat\beta_{p+i, \gamma}) \dgoto \E
\left[  h(\tau'\eta_{\alpha', \gamma}(Z)) \right],
\]
and Lemma \ref{lm:p_part} gives
\[
\frac1{p}\sum_{i=1}^p   g(\widehat\beta_{i, \gamma}, \beta_i)  \dgoto \E
\left[   g(\eta_{\alpha'\tau'^{2-\gamma}, \gamma}(\Pi + \tau' Z), \Pi) \right].
\]
Now, let us consider the distribution of
\begin{equation}\label{eq:gh_prod}
\frac1{p}\sum_{i=1}^p   g(\widehat\beta_{i,\gamma}, \beta_i)   h(\widehat\beta_{p+i, \gamma})
\end{equation}
conditional on $  g(\widehat\beta_{1,\gamma}, \beta_1), \ldots,   g(\widehat\beta_{p,\gamma}, \beta_p)$ and the empirical distribution of $\{h(\widehat\beta_{p+i, \gamma})\}_{i=1}^p$. This $\sigma$-algebra is denoted as $\F$. Note that knowing the empirical distribution of $\{h(\widehat\beta_{p+i, \gamma})\}_{i=1}^p$ is the same as knowing all values of $h(\widehat\beta_{p+i, \gamma})$ except for the indices. 
By symmetry, the conditional distribution of \eqref{eq:gh_prod} is the same as that of
\begin{equation}\nonumber
\frac1{p}\sum_{i=1}^p g(\widehat\beta_{i,\gamma}, \beta_i) h(\widehat\beta_{p+\pi(i), \gamma}),
\end{equation}
where $(\pi(1), \ldots, \pi(p))$ is a permutation of $1, \ldots, p$ drawn uniformly at random. Then, first we know
\[
\E\left[\frac1{p}\sum_{i=1}^p g(\widehat\beta_{i,\gamma}, \beta_i) h(\widehat\beta_{p+\pi(i), \gamma}) \Bigg| \F \right] = \left[ \frac1{p}\sum_{i=1}^p g(\widehat\beta_{i,\gamma}, \beta_i) \right] \left[ \frac1{p}\sum_{i=1}^p h(\widehat\beta_{p+i, \gamma}) \right],
\]
which converges to the constant
\[
\E \left[ g(\eta_{\alpha'\tau'^{2-\gamma}, \gamma}(\Pi + \tau' Z), \Pi) \right] \E \left[h(\tau'\eta_{\alpha', \gamma}(Z)) \right].
\]
Recognizing the boundedness of $\sum g h/p$, which results from the boundedness of the terms of this sum, a consequence of the above implies
\begin{equation}\label{eq:e_var_0}
\Var \left\{\E\left[\frac1{p}\sum_{i=1}^p g(\widehat\beta_{i,\gamma}, \beta_i) h(\widehat\beta_{p+i, \gamma}) \Bigg| \F \right]\right\} \goto 0.
\end{equation}
Moreover, due to the boundedness of $\frac1{p}\sum_{i=1}^p g(\widehat\beta_{i,\gamma}, \beta_i) h(\widehat\beta_{p+\pi(i), \gamma})$, it must hold that
\begin{equation}\label{eq:full_mean}
\begin{aligned}
\E\left[\frac1{p}\sum_{i=1}^p g(\widehat\beta_{i,\gamma}, \beta_i) h(\widehat\beta_{p+i, \gamma})\right] &= \E\left[\frac1{p}\sum_{i=1}^p g(\widehat\beta_{i,\gamma}, \beta_i) h(\widehat\beta_{p+\pi(i), \gamma})\right] \\
&\Longrightarrow \E \left[ g(\eta_{\alpha'\tau'^{2-\gamma}, \gamma}(\Pi + \tau' Z), \Pi) \right] \E \left[h(\tau'\eta_{\alpha', \gamma}(Z)) \right].
\end{aligned}
\end{equation}
Now, we consider the variance and write $\|f\|_{\infty}$ for the supremum of a function $f$. To begin, we invoke Lemma \ref{lm:ab_perm}, from which we get
\[
\begin{aligned}
\Var\left[\frac1{p}\sum_{i=1}^p g(\widehat\beta_{i,\gamma}, \beta_i) h(\widehat\beta_{p+i, \gamma}) \Bigg| \F \right] &= \frac{\sum_{i=1}^p (g(\widehat\beta_{i,\gamma}, \beta_i) - \overbar g)^2 \sum_{i=1}^p (h(\widehat\beta_{p+i,r}) - \overbar h)^2}{p^2(p-1)}\\
&\le \frac{\sum_{i=1}^p 4 \|g\|_{\infty}^2 \sum_{i=1}^p 4 \|h\|_{\infty}^2}{p^2(p-1)}\\
&\le \frac{16 p^2 \|g\|_{\infty}^2 \|h\|_{\infty}^2}{p^2(p-1)}\\
&\le \frac{16 \|g\|_{\infty}^2 \|h\|_{\infty}^2}{p-1}\\
& \goto 0
\end{aligned}
\]
as $p \goto \infty$. Therefore, its boundedness gives
\begin{equation}\label{eq:var_e_0}
\E \left\{\Var\left[\frac1{p}\sum_{i=1}^p g(\widehat\beta_{i,\gamma}, \beta_i) h(\widehat\beta_{p+i, \gamma}) \Bigg| \F \right]\right\} \goto 0.
\end{equation}
Thus, from \eqref{eq:e_var_0} and \eqref{eq:var_e_0} we get
\begin{equation}\label{eq:full_var}
\begin{aligned}
&\Var\left[\frac1{p}\sum_{i=1}^p g(\widehat\beta_{i,\gamma}, \beta_i) h(\widehat\beta_{p+i, \gamma})\right] \\
&= \Var \left\{\E\left[\frac1{p}\sum_{i=1}^p g(\widehat\beta_{i,\gamma}, \beta_i) h(\widehat\beta_{p+i, \gamma}) \Bigg| \F \right]\right\} + \E \left\{\Var\left[\frac1{p}\sum_{i=1}^p g(\widehat\beta_{i,\gamma}, \beta_i) h(\widehat\beta_{p+i, \gamma}) \Bigg| \F \right]\right\}\\
& \goto 0.
\end{aligned}
\end{equation}

Finally, \eqref{eq:full_mean} and \eqref{eq:full_var} together reveal that
\[
\frac1{p}\sum_{i=1}^p g(\widehat\beta_{i,\gamma}, \beta_i) h(\widehat\beta_{p+i, \gamma}) \Longrightarrow \E \left[ g(\eta_{\alpha'\tau'^{2-\gamma}, \gamma}(\Pi + \tau' Z), \Pi) \right] \E \left[h(\tau'\eta_{\alpha', \gamma}(Z)) \right]
\]
as $p \goto \infty$.

\end{proof}


In the remainder of this subsection, we complete the proof of Lemmas~\ref{lm:part_amp}, \ref{lm:p_part}, and \ref{lm:ab_perm}. In the proof of Lemma~\ref{lm:part_amp}, we need the following preparatory lemma.

\begin{lemma}\label{lm:exchangeable}
Let $\{\xi_{p1}, \xi_{p2}, \ldots, \xi_{p m_{p}}\}_{p=1}^{\infty}$ be
a triangular array of bounded random variables such that $\xi_{p1},
\xi_{p2}, \ldots, \xi_{p m_{p}}$ are exchangeable for every $p$ and
$m_p \goto \infty$ as $p \goto \infty$. If for a constant $c$, 
\[
\frac{\xi_{p1} + \cdots + \xi_{p m_{p}}}{m_p} \Longrightarrow c
\]
as $p \goto \infty$,  for an arbitrary (deterministic) sequence $l_p$ satisfying $l_p \le m_p$ and $l_p \goto \infty$, we must have
\[
\frac{\xi_{p1} + \cdots + \xi_{p l_{p}}}{l_p} \Longrightarrow c.
\]

\end{lemma}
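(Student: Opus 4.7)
The plan is to control the partial average through a randomization argument that reduces it to an expression involving the full average, which is the only information supplied by the hypothesis. Let $\pi$ be a uniformly random permutation of $\{1,\dots,m_p\}$ drawn independently of the $\xi$-array. By the exchangeability of $\xi_{p1},\dots,\xi_{pm_p}$, the partial average $(\xi_{p1}+\cdots+\xi_{pl_p})/l_p$ has the same distribution as the randomized partial average $S_p := (\xi_{p\pi(1)}+\cdots+\xi_{p\pi(l_p)})/l_p$. Since the target limit $c$ is a constant, convergence in probability and convergence in distribution coincide, so it suffices to show $S_p \Longrightarrow c$, and in fact I will establish the stronger $L^2$ convergence.

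Next, I would compute the moments of $S_p$ by conditioning on the $\sigma$-algebra $\mathcal{G}_p$ generated by $(\xi_{p1},\dots,\xi_{pm_p})$. The conditional mean of $S_p$ equals the full sample mean $\overbar\xi_p := (\xi_{p1}+\cdots+\xi_{pm_p})/m_p$. By hypothesis $\overbar\xi_p \Longrightarrow c$; the uniform bound $|\xi_{pi}|\le M$ upgrades this to $L^2$ convergence by bounded convergence, so $\E[\overbar\xi_p]\to c$ and $\Var(\overbar\xi_p)\to 0$. For the conditional variance I would apply Lemma \ref{lm:ab_perm} with $A_i=\mathbf{1}\{i\le l_p\}$ and $B_i=\xi_{pi}$: a direct calculation gives $\sum_i (A_i-\overbar A)^2 = l_p(m_p-l_p)/m_p \le l_p$ and $\sum_i (B_i-\overbar B)^2 \le m_p M^2$, so that
\[
\Var(S_p \mid \mathcal{G}_p) \;\le\; \frac{1}{l_p^2}\cdot \frac{l_p\cdot m_p M^2}{m_p-1} \;\le\; \frac{2M^2}{l_p} \;\longrightarrow\; 0.
\]

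The law of total variance then gives $\Var(S_p) = \E[\Var(S_p\mid \mathcal{G}_p)] + \Var(\overbar\xi_p)\to 0$, while $\E[S_p] = \E[\overbar\xi_p]\to c$. Hence $S_p\to c$ in $L^2$, therefore in probability, and the same conclusion transfers to $(\xi_{p1}+\cdots+\xi_{pl_p})/l_p$ via the distributional identity above. The argument is essentially self-contained given Lemma \ref{lm:ab_perm}, and I do not anticipate a genuine obstacle: the only point deserving care is deducing the $L^2$-convergence of $\overbar\xi_p$ from convergence in probability, which is immediate from the uniform boundedness of the triangular array. Conceptually, the lemma just packages exchangeability together with the $O(1/l_p)$ conditional variance bound that Lemma \ref{lm:ab_perm} provides for sampling without replacement.
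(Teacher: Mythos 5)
Your proof is correct and follows the same strategy as the paper: use exchangeability to replace the deterministic partial sum by a randomized selection, condition on the full array, and show that the conditional variance of the randomized partial mean is $O(1/l_p)$. The only cosmetic differences are that you work in $L^2$ via the law of total variance (rather than a triangle-inequality split followed by conditional Chebyshev) and that you invoke Lemma~\ref{lm:ab_perm} explicitly to compute the conditional variance, whereas the paper appeals directly to the fact that sampling without replacement has variance no larger than sampling with replacement; both give the same $O(1/l_p)$ bound.
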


\begin{proof}[Proof of Lemma \ref{lm:exchangeable}]
Fix any $\iota > 0$. We will show that
\[\lim_{p\rightarrow\infty}\mathbb{P}\left\{\left|\frac{\xi_{p1} + \dots + \xi_{pl_p}}{l_p} - c\right|>\iota\right\}= 0.\]
For any $p$ let $S_p$ be a random subset of $\{1,\dots,m_p\}$ of cardinality $l_p$, drawn independently of the $\xi_{pi}'s$. Then by exchangeability,
$\sum_{i=1}^{l_p}\xi_{pi}$ is equal in distribution to $\sum_{i\in S_p}\xi_{pi}$.
Therefore we equivalently need to show that
\[\lim_{p\rightarrow\infty}\mathbb{P}\left\{\left|\frac{\sum_{i\in S_p}\xi_{pi}}{l_p} - c\right|>\iota\right\}= 0.\]
We trivially have 
\begin{multline*}\lim_{p\rightarrow\infty}\mathbb{P}\left\{\left|\frac{\sum_{i\in S_p}\xi_{pi}}{l_p} - c\right|>\iota\right\} \\
\leq \lim_{p\rightarrow\infty}\mathbb{P}\left\{\left|\frac{\sum_{i\in S_p}\xi_{pi}}{l_p} - \frac{\xi_{p1} + \dots + \xi_{pm_p}}{m_p}\right|>\iota/2\right\}
+\lim_{p\rightarrow\infty}\mathbb{P}\left\{\left|\frac{\xi_{p1} + \dots + \xi_{pm_p}}{m_p} - c\right|>\iota/2\right\}.\end{multline*}
The assumption $\frac{\xi_{p1}+\dots+\xi_{pm_p}}{m_p} \Longrightarrow c$ implies that
\[\lim_{p\rightarrow\infty}\mathbb{P}\left\{\left|\frac{\xi_{p1} + \dots + \xi_{pm_p}}{m_p} - c\right|>\iota/2\right\}= 0.\]
Next we bound the remaining term. Recall that the $\xi_{pi}$'s are bounded, so we can assume $\xi_{pi}\in[-B,B]$ for some finite $B>0$. We then have
\[\mathrm{Var}\left(\frac{\sum_{i\in S_p}\xi_{pi}}{l_p} 
\  \bigg\vert \ \xi_{p1},\dots,\xi_{pm_p}\right) \leq \frac{4B^2}{l_p},\]
since sampling uniformly with replacement always has variance no larger than sampling uniformly without replacement, and the $\xi_{pi}$'s are bounded. 
Therefore,
\[\mathbb{P}\left\{\left|\frac{\sum_{i\in S_p}\xi_{pi}}{l_p} - \frac{\xi_{p1} + \dots + \xi_{pm_p}}{m_p}\right|>\iota/2 \  \bigg\vert \  \xi_{p1},\dots,\xi_{pm_p}\right\}
\leq \frac{4B^2/l_p}{\iota^2/4}\]
almost surely. Marginalizing,
\[\mathbb{P}\left\{\left|\frac{\sum_{i\in S_p}\xi_{pi}}{l_p} - \frac{\xi_{p1} + \dots + \xi_{pm_p}}{m_p}\right|>\iota/2\right\}
\leq \frac{4B^2/l_p}{\iota^2/4},\]
which tends to zero as $p\rightarrow\infty$ since $\iota$ is fixed and $l_p\rightarrow\infty$. This completes the proof.
\end{proof}




Now, we are ready to prove Lemma \ref{lm:part_amp}.
\begin{proof}[Proof of Lemma \ref{lm:part_amp}]
It suffices to prove the lemma for any bounded Lipschitz continuous functions. To see this sufficiency, assume for the moment that 
\begin{equation}\label{eq:g_assume}
\frac1{p}\sum_{i=1}^p g(\widehat\beta_{p+i, \gamma}) \Longrightarrow \E g(\tau' \eta_{\alpha', \gamma}(Z))
\end{equation}
if $g$ is bounded and Lipschitz continuous. Let $f$ be a continuous function that satisfies $|f(x)| \le M$ for all $x$. We show below that
\begin{equation}\label{eq:f_holds_t}
\frac1{p}\sum_{i=1}^p f(\widehat\beta_{p+i, \gamma}) \Longrightarrow \E f(\tau' \eta_{\alpha', \gamma}(Z)).
\end{equation}

Let $\upsilon > 0$ be an arbitrary small number. As a consequence of Lemma~\ref{lm:boundedbeta} presented in Section~\ref{sec:proof-theorem} below, if $A$ is sufficiently large, then
\begin{equation}\label{eq:a_most_s}
\frac{\#\{1 \le i \le p: |\widehat\beta_{p+i, \gamma}| > A \}}{p} \le \upsilon
\end{equation}
with probability tending to one as $p \goto \infty$. As is clear, one can find a Lipschitz continuous function $g$ defined on a compact set, for example, $[-A, A]$ that satisfies
\begin{equation}\label{eq:f_g_close}
|f(x) - g(x)| \le \upsilon
\end{equation}
for all $-A \le x \le A$. We can extend $g$ to a bounded Lipschitz continuous function defined on $\R$. This can be done, for example, by setting $g(x) = 0$ if $|x| > A + 1$ and let $g$ be linear on $[-A-1, -A]$ and $[A, A+1]$. Hence, \eqref{eq:g_assume} holds for $g$. Let $M'$ be an upper bound of $g$ in the sense that $|g(x)| \le M'$ for all $x$ (we can take $M' = M + \upsilon$). To show \eqref{eq:f_holds_t}, we first write 
\[
\frac1{p}\sum_{i=1}^p f(\widehat\beta_{p+i, \gamma}) = \frac1{p}\sum_{i=1}^p f(\widehat\beta_{p+i, \gamma})\bm{1}_{|\widehat\beta_{p+i, \gamma}| \le A} + \frac1{p}\sum_{i=1}^p f(\widehat\beta_{p+i, \gamma})\bm{1}_{|\widehat\beta_{p+i, \gamma}| > A}
\]
and
\[
\frac1{p}\sum_{i=1}^p g(\widehat\beta_{p+i, \gamma}) = \frac1{p}\sum_{i=1}^p g(\widehat\beta_{p+i, \gamma})\bm{1}_{|\widehat\beta_{p+i, \gamma}| \le A} + \frac1{p}\sum_{i=1}^p g(\widehat\beta_{p+i, \gamma})\bm{1}_{|\widehat\beta_{p+i, \gamma}| > A},
\]
where the indicator function $\bm{1}$ takes the value 1 if the event in the subscript happens and takes the value 0 otherwise. This gives
\[
\begin{aligned}
\left| \frac1{p}\sum_{i=1}^p f(\widehat\beta_{p+i,r}) - \frac1{p}\sum_{i=1}^p g(\widehat\beta_{p+i,r}) \right| \le& \left| \frac1{p}\sum_{i=1}^p f(\widehat\beta_{p+i,r})\bm{1}_{|\widehat\beta_{p+i,r}| \le A} - \frac1{p}\sum_{i=1}^p g(\widehat\beta_{p+i,r})\bm{1}_{|\widehat\beta_{p+i,r}| \le A} \right| \\
&+ \left| \frac1{p}\sum_{i=1}^p f(\widehat\beta_{p+i,r})\bm{1}_{|\widehat\beta_{p+i,r}| > A} \right| + \left| \frac1{p}\sum_{i=1}^p g(\widehat\beta_{p+i,r})\bm{1}_{|\widehat\beta_{p+i,r}| > A} \right|\\
\le& \frac1{p}\sum_{i=1}^p |f(\widehat\beta_{p+i,r}) - g(\widehat\beta_{p+i,r})|\bm{1}_{|\widehat\beta_{p+i,r}| \le A} \\
&+ \frac1{p}\sum_{i=1}^p M \bm{1}_{|\widehat\beta_{p+i,r}| > A} + \frac1{p}\sum_{i=1}^p M' \bm{1}_{|\widehat\beta_{p+i,r}| > A}\\
\le& \frac1{p}\sum_{i=1}^p \upsilon \bm{1}_{|\widehat\beta_{p+i,r}| \le A} +  \frac{(M + M') \#\{1 \le i \le p: |\widehat\beta_{p+i,r}| > A\}}{p}  \\
\le& \upsilon +  (M + M') \upsilon \\
=& (M + M' + 1) \upsilon,
\end{aligned}
\]
where in the second last inequality we use \eqref{eq:f_g_close}, and the last inequality follows from \eqref{eq:a_most_s} and thus holds with probability tending to one. Similarly, we can show that the difference between $\E g(\tau'\eta_{\alpha', \gamma}(Z))$ and $\E f(\tau'\eta_{\alpha', \gamma}(Z))$ can be made arbitrarily small if $\upsilon$ is small enough. Taking $\upsilon \goto 0$, therefore, we see that \eqref{eq:g_assume} implies \eqref{eq:f_holds_t}.

To conclude the proof of this lemma, therefore, it is sufficient to prove \eqref{eq:g_assume} for any bounded Lipschitz continuous function $g$. For convenience, we write $f$ in place of $g$ and assume that $f$ is bounded by $M$ in magnitude and is $L$-Lipschitz continuous. Consider the function
\[
f_a(x, y) = f(x) \max\{0, 1 - |y|/a\}
\]
for $a > 0$. Our first step is to verify that this function is Lipschitz continuous and is, therefore, pseudo-Lipschitz continuous (see the definition in \cite{bayati2012}). Writing $x_+$ for $\max\{0, x\}$, we note that
\[
\begin{aligned}
|f_a(x, y) - f_a(x', y')| &= \left| f(x) (1 - |y|/a)_+ - f(x') (1 - |y'|/a)_+ \right|\\
                                &= \left| f(x) (1 - |y|/a)_+ - f(x) (1 - |y'|/a)_+ + f(x) (1 - |y'|/a)_+ - f(x') (1 - |y'|/a)_+ \right|\\
                                &\le \left| f(x) (1 - |y|/a)_+ - f(x) (1 - |y'|/a)_+ \right| + \left| f(x) (1 - |y'|/a)_+ - f(x') (1 - |y'|/a)_+ \right|\\
                                &\le M |y - y'|/a + L (1 - |y'|/a)_+ |x - x'| \\
                                &\le (M/a + L)\|(x, y) - (x', y')\|_2.
\end{aligned}
\]
This proves that $f_a$ is Lipschitz continuous. Using Theorem 2.1 of \cite{weng2018overcoming}, therefore, we get
\begin{equation}\label{eq:a_small_zero}
\frac1{2p} \sum_{i=1}^{2p} f_a(\widehat\beta_{i,\gamma}, \beta_i) \Longrightarrow \E f_a\left( \eta_{\alpha'\tau'^{2-\gamma}, \gamma}(\widetilde\Pi + \tau' Z), \widetilde\Pi \right) 
\end{equation}
for any fixed $a > 0$, where the random variable $\widetilde\Pi = \Pi$ with probability $\frac12$ and otherwise $\widetilde\Pi = 0$. 
Note that Theorem 2.1 in its present form considers a bridge estimator of order $1 \le \gamma \le 2$ and can be extended to any $\gamma > 2$ (personal communication).


Now we will take $a\rightarrow0$ on the right-hand side of \eqref{eq:a_small_zero}. Recognizing that 
\[
f_a\left( \eta_{\alpha'\tau'^{2-\gamma}, \gamma}(\widetilde\Pi + \tau' Z), \widetilde\Pi \right) \goto f\left( \eta_{\alpha'\tau'^{2-\gamma}, \gamma}(\widetilde\Pi + \tau' Z)\right)
\] 
if $\widetilde\Pi = 0$ and otherwise $f_a\left( \eta_{\alpha'\tau'^{2-\gamma}, \gamma}(\widetilde\Pi + \tau' Z), \widetilde\Pi \right) \goto 0$ as $a \goto 0$, the boundedness of $f_a$ allows us to use Lebesgue's dominated convergence theorem to obtain
\begin{equation}\label{eq:a_limit}
\begin{aligned}
\lim_{a \goto 0+} \E f_a\left( \eta_{\alpha'\tau'^{2-\gamma}, \gamma}(\widetilde\Pi + \tau' Z), \widetilde\Pi \right) &= \frac12 \E f\left( \eta_{\alpha'\tau'^{2-\gamma}, \gamma}(\tau' Z)\right) + \frac{1 - \epsilon}{2} \E f\left( \tau'\eta_{\alpha',r}(Z)\right)\\
&= \frac{2-\epsilon}2 \E f\left( \tau'\eta_{\alpha', \gamma}(Z)\right).
\end{aligned}
\end{equation}
Turning to the left-hand side of \eqref{eq:a_small_zero}, we use the fact that for any $c_1 > 0$, one can find $c_2 > 0$ such that
\begin{equation}\label{eq:sum_non_small}
\left| \frac1{2p} \sum_{i: \beta_i \ne 0} f_a(\widehat\beta_{i,\gamma}, \beta_i) \right| \le c_1
\end{equation}
with probability approaching one for each $a < c_2$. To see this, note that
\[
\begin{aligned}
\left| \frac1{2p} \sum_{i: \beta_i \ne 0} f_a(\widehat\beta_{i,\gamma}, \beta_i) \right| &\le \frac1{2p} \sum_{i: \beta_i \ne 0} \left|f_a(\widehat\beta_{i,\gamma}, \beta_i) \right|\\
&\le \frac1{2p} \sum_{i: \beta_i \ne 0} M (1 - |\beta_i|/a)_+,
\end{aligned}
\]
of which the expectation satisfies
\[
\E \left[ \frac1{2p} \sum_{i: \beta_i \ne 0} M (1 - |\beta_i|/a)_+ \right] = \frac{\epsilon}2 \E \left[ M (1 - |\Pi^*|/a)_+ \right] \le \frac{M \epsilon}{2} \P(|\Pi^*| < a),
\]
since $\Pi^*$ places no mass at zero, by definition. This inequality in conjunction with the Markov inequality reveals that \eqref{eq:sum_non_small} holds if $a$ is sufficiently small.


Writing
\[
\frac1{2p} \sum_{i=1}^{2p} f_a(\widehat\beta_{i,\gamma}, \beta_i) = \frac1{2p} \sum_{i: \beta_i \ne 0} f_a(\widehat\beta_{i,\gamma}, \beta_i) + \frac1{2p} \sum_{i: \beta_i = 0} f(\widehat\beta_{i,\gamma})
\]
and taking $a \goto 0$, we get
\begin{equation}\nonumber
\frac1{2p} \sum_{1 \le i \le 2p: \beta_i = 0} f(\widehat\beta_{i,\gamma}) \Longrightarrow \frac{2-\epsilon}2 \E f\left( \tau'\eta_{\alpha', \gamma}(Z)\right)
\end{equation}
from \eqref{eq:a_limit}, \eqref{eq:a_small_zero}, and \eqref{eq:sum_non_small}. This is equivalent to
\begin{equation}\label{eq:beta_div_nom}
\frac{\sum_{1 \le i \le 2p: \beta_i = 0} f(\widehat\beta_{i,\gamma})}{\#\{1 \le
  i \le 2p: \beta_i = 0\}} \Longrightarrow \E f\left( \tau'\eta_{\alpha', \gamma}(Z)\right),
\end{equation}
which makes use of the fact that
\begin{equation}\label{eq:mp_concentrate}
\frac{\#\{1 \le i \le 2p: \beta_i = 0\}}{2p} \Longrightarrow \frac{2 -
\epsilon}{2}.
\end{equation}

To conclude the proof of this lemma, we apply Lemma \ref{lm:exchangeable} to \eqref{eq:beta_div_nom}. This is done by letting $m_p = \#\{1 \le
  i \le 2p: \beta_i = 0\}$ and $\{\xi_{p1}, \xi_{p2}, \ldots, \xi_{p
    m_{p}}\} = \{f(\widehat\beta_{i,\gamma}): \beta_i = 0, 1 \le i \le 2p\}$
  and $l_p = p$ and $\{\xi_{p1}, \xi_{p2}, \ldots, \xi_{p l_{p}}\} =
  \{f(\widehat\beta_{i,\gamma}): \beta_i = 0, p+1 \le i \le 2p\}$. For completeness, we remark that the randomness of $m_{p}$ does not affect the validity of Lemma~\ref{lm:exchangeable} due to \eqref{eq:mp_concentrate}. Thus, we get
\[
\frac1{p} \sum_{i=p+1}^{2p} f(\widehat\beta_{i,\gamma}) = \frac1{l_p} \sum_{i=p+1}^{2p} f(\widehat\beta_{i,\gamma}) \Longrightarrow \E f\left( \tau'\eta_{\alpha', \gamma}(Z)\right).
\]
This completes the proof.

\end{proof}

\begin{proof}[Proof of Lemma \ref{lm:p_part}]
As with Lemma \ref{lm:part_amp}, it is sufficient to prove the present lemma for any bounded Lipschitz continuous functions. By Theorem 1.5 of \cite{bayati2012}, we get
\begin{equation}\label{eq:bayati_apply}
\begin{aligned}
\frac1{2p}\sum_{i=1}^{2p} f(\widehat\beta_{i,\gamma}, \beta_i) = \frac1{2p}\sum_{i=1}^p f(\widehat\beta_{i,\gamma}, \beta_i) + \frac1{2p}\sum_{i=p+1}^{2p} f(\widehat\beta_{i,\gamma}, 0) \Longrightarrow \E f(\eta_{\alpha'\tau'^{2-\gamma}, \gamma}(\widetilde\Pi+\tau' Z), \widetilde\Pi).
\end{aligned}
\end{equation}
Note that the right-hand side can be written as
\begin{equation}\label{eq:exp_2_t}
\E f(\eta_{\alpha'\tau'^{2-\gamma}, \gamma}(\widetilde\Pi+\tau' Z), \widetilde\Pi) = \frac12 \E f(\eta_{\alpha'\tau'^{2-\gamma}, \gamma}(\Pi+\tau' Z), \Pi) + \frac12 \E f(\eta_{\alpha'\tau'^{2-\gamma}, \gamma}(\tau' Z), 0).
\end{equation}

On the other hand, from Lemma \ref{lm:part_amp} we know
\begin{equation}\label{eq:early_lm_res}
\frac1{p}\sum_{i=p+1}^{2p} f(\widehat\beta_{i,\gamma}, 0) \Longrightarrow \E f(\tau'\eta_{\alpha', \gamma}(Z), 0).
\end{equation}
Plugging \eqref{eq:early_lm_res} into \eqref{eq:bayati_apply} and recognizing \eqref{eq:exp_2_t}, we get
\[
\frac1{p}\sum_{i=1}^p f(\widehat\beta_{i,\gamma}, \beta_i) \Longrightarrow \E f(\eta_{\alpha'\tau'^{2-\gamma}, \gamma}(\Pi+\tau' Z), \Pi).
\]
This completes the proof.

\end{proof}

\begin{proof}[Proof of Lemma \ref{lm:ab_perm}]
We have
\[
\begin{aligned}
\Var(A_1 B_{\pi(1)} + \cdots + A_p B_{\pi(p)}) = \sum_{i=1}^p \Var(A_i B_{\pi(i)}) + 2 \sum_{i < j} \Cov(A_i B_{\pi(i)}, A_j B_{\pi(j)}).
\end{aligned}
\]
First, we get
\[
\Var(A_i B_{\pi(i)}) = A_i^2\Var(B_{\pi(i)}) = \frac{A_i^2 \sum_{l=1}^p (B_l - \overbar B)^2}{p},
\]
where $\overbar B = (B_1 + \cdots + B_p)/p$, and
\[
\begin{aligned}
\Cov(A_i B_{\pi(i)}, A_j B_{\pi(j)}) &= A_i A_j \Cov(B_{\pi(i)}, B_{\pi(j)})\\
&= A_i A_j \left( \E B_{\pi(i)} B_{\pi(j)} - \E B_{\pi(i)} \E B_{\pi(j)} \right)\\
&= A_i A_j \left( \frac{\sum_{l \ne m} B_l B_m}{p(p-1)} - \frac{(B_1 + \cdots + B_p)^2}{p^2}\right)\\
&= -A_i A_j \frac{\sum_{l=1}^p (B_l - \overbar B)^2}{p(p-1)}.
\end{aligned}
\]
Thus, we get
\[
\begin{aligned}
&\sum_{i=1}^p \Var(A_i B_{\pi(i)}) + 2 \sum_{i < j} \Cov(A_i B_{\pi(i)}, A_j B_{\pi(j)}) \\
&= \sum_{i=1}^p \frac{A_i^2 \sum_{l=1}^p (B_l - \overbar B)^2}{p} - 2\sum_{i < j} A_i A_j \frac{\sum_{l=1}^p (B_l - \overbar B)^2}{p(p-1)}\\
&= \left[\sum_{l=1}^p (B_l - \overbar B)^2 \right] \left[ \sum_{i=1}^p \frac{A_i^2}{p} - 2\sum_{i < j} \frac{A_i A_j}{p(p-1)} \right]\\
&= \left[\sum_{l=1}^p (B_l - \overbar B)^2 \right] \left[ \frac{\sum_{l=1}^p (A_l - \overbar A)^2}{p-1} \right]\\
&= \frac{\left[\sum_{l=1}^p (B_l - \overbar B)^2 \right] \left[\sum_{l=1}^p (A_l - \overbar A)^2 \right]}{p-1}.
\end{aligned}
\]

\end{proof}

\subsection{Proofs of Theorem \ref{thm:contrast_gen} and Proposition~\ref{thm:contrast}}
\label{sec:proof-theorem}

We first prove Theorem \ref{thm:contrast_gen} with a fixed $\lambda$, followed by a discussion showing that the theorem holds uniformly over $\lambda$ in a compact set for the Lasso, thereby proving Proposition~\ref{thm:contrast}. In addition to Lemma~\ref{lm:factored}, the proof relies on Lemmas~\ref{lm:sw_app} and \ref{lm:boundedbeta}, which we state below.

Let $C(\Omega, \R)$ denote the class of all real-valued continuous functions defined on a compact Hausdorff space $\Omega$.

\begin{lemma}\label{lm:sw_app}
Let $\Omega_1$ and $\Omega_2$ be two compact Hausdorff spaces and $f :
\Omega_1 \times \Omega_2 \rightarrow \R$ be a continuous function,
then for every $\upsilon > 0$ there exist a positive integer $m$ and continuous
functions $g_1, \ldots, g_m$ on $\Omega_1$ and continuous functions $h_1, \ldots, h_m$ on $\Omega_2$ such that 
\[
\sup_{(x_1, x_2) \in \Omega_1 \times \Omega_2} \left| f(x_1, x_2) -
  \sum_{i=1}^m  g_i(x_1)  h_i(x_2) \right| \le \upsilon.
\]
\end{lemma}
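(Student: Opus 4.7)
The plan is to recognize Lemma \ref{lm:sw_app} as a direct consequence of the Stone--Weierstrass theorem applied to the product space $\Omega_1 \times \Omega_2$, which is compact Hausdorff as a product of two compact Hausdorff spaces. Let
\[
\mathcal{A} \;=\; \left\{\, \sum_{i=1}^m g_i(x_1)\,h_i(x_2) \;:\; m \in \mathbb{N},\; g_i \in C(\Omega_1,\R),\; h_i \in C(\Omega_2,\R) \,\right\} \;\subset\; C(\Omega_1 \times \Omega_2,\R),
\]
so that the claim is equivalent to asserting that $\mathcal{A}$ is uniformly dense in $C(\Omega_1 \times \Omega_2, \R)$.

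The first step is to verify that $\mathcal{A}$ is a subalgebra containing the constant functions. Closure under addition and scalar multiplication is immediate because a linear combination of finite sums of products is again a finite sum of products. Closure under multiplication follows by distributing: $\bigl(\sum_i g_i h_i\bigr)\bigl(\sum_j g_j' h_j'\bigr) = \sum_{i,j} (g_i g_j')(h_i h_j')$, where each factor $g_i g_j'$ lies in $C(\Omega_1,\R)$ and each $h_i h_j'$ lies in $C(\Omega_2,\R)$. Taking $m=1$ with $g_1 \equiv 1$ and $h_1 \equiv 1$ shows that $\mathcal{A}$ contains the constants.

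The second step is to show that $\mathcal{A}$ separates points of $\Omega_1 \times \Omega_2$. Given distinct $(x_1,x_2),(y_1,y_2) \in \Omega_1 \times \Omega_2$, at least one of $x_1 \ne y_1$ or $x_2 \ne y_2$ holds. Since each $\Omega_i$ is compact Hausdorff and therefore normal, Urysohn's lemma produces a continuous function on $\Omega_i$ separating the two coordinates; multiplying by the constant function $1$ on the other factor yields an element of $\mathcal{A}$ that takes different values at the two points.

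With these two properties verified, the Stone--Weierstrass theorem immediately gives that $\mathcal{A}$ is dense in $C(\Omega_1 \times \Omega_2, \R)$ under the uniform norm, which is exactly the conclusion of the lemma applied to the continuous function $f$ and the prescribed tolerance $\upsilon$. I do not anticipate any real obstacle here: the argument is a textbook application of Stone--Weierstrass, and the only mild technical point is invoking normality of compact Hausdorff spaces to obtain point separation through Urysohn's lemma rather than assuming any metric or second-countable structure on the $\Omega_i$.
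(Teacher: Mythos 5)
Your proof is correct and follows exactly the route the paper takes: the paper simply cites the Stone--Weierstrass theorem (specifically Corollary 11.6 in Carothers' \emph{A Short Course on Banach Space Theory}), and your argument is the standard verification that the algebra of finite sums of products $g_i(x_1)h_i(x_2)$ is a point-separating, constant-containing subalgebra of $C(\Omega_1\times\Omega_2,\R)$ to which the theorem applies. The invocation of Urysohn's lemma via normality of compact Hausdorff spaces is the right way to handle point separation in the absence of a metric, and no gap remains.
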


Lemma~\ref{lm:sw_app} serves as an approximation tool for our proof. For information, this lemma follows from the Stone--Weierstrass theorem (see Corollary 11.6 in \cite{carothers1998short}).

\begin{lemma}\label{lm:boundedbeta}
\[
\lim_{A \goto \infty} \limsup_{p \goto \infty} \E \left[ \frac{\#\{1 \le i \le p: \max(|\widehat\beta_{i,\gamma}|, |\beta_i|, |\widehat\beta_{p+i,r}|) > A\}}{p} \right]  = 0.
\]
\end{lemma}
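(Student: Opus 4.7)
The plan is to isolate the three contributions in $\max(|\widehat\beta_i|, |\beta_i|, |\widehat\beta_{p+i}|)$ by a union bound, dispatch the ``signal'' term using the assumed moment $\E \Pi^2 < \infty$, and dispatch the two ``Lasso estimate'' terms by a direct application of Theorem~1.5 of \cite{bayati2012} to the augmented $n \times 2p$ i.i.d.\ $\N(0,1/n)$ system. A key logical point is that the downstream Lemmas~\ref{lm:p_part}, \ref{lm:part_amp}, and \ref{lm:factored} all invoke the present lemma, so I must not use them; I will go straight to Bayati--Montanari instead. The augmented coefficient vector $(\beta_1, \ldots, \beta_p, 0, \ldots, 0)$ has empirical distribution converging to $\widetilde\Pi = \tfrac12 \Pi + \tfrac12 \delta_0$, and the state-evolution parameters for this augmented system are precisely the $(\alpha', \tau')$ defined by \eqref{eq:system-kf}.

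Concretely, from
\[
\mathbf{1}\{\max(|\widehat\beta_i|,|\beta_i|,|\widehat\beta_{p+i}|)>A\} \le \mathbf{1}\{|\widehat\beta_i|>A\} + \mathbf{1}\{|\beta_i|>A\} + \mathbf{1}\{|\widehat\beta_{p+i}|>A\},
\]
it suffices to bound the three averaged indicator sums in expectation. The middle one equals $\P(|\Pi|>A)\le A^{-2}\E\Pi^2 \to 0$ as $A\to\infty$. For the other two, fix $A > 1$ and introduce the Lipschitz bump $\psi_A:\R\to[0,1]$ with $\psi_A(x)=1$ for $|x|\ge A$, $\psi_A(x)=0$ for $|x|\le A-1$, and piecewise linear in between; then $\mathbf{1}\{|x|>A\}\le \psi_A(x)\le 1$. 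Regarded as a function on $\R^2$ that ignores its second argument, $\psi_A$ is pseudo-Lipschitz, so Theorem~1.5 of \cite{bayati2012} applied to the augmented system yields
\[
\frac{1}{2p}\sum_{i=1}^{2p}\psi_A(\widehat\beta_i) \ \Longrightarrow \ \E\,\psi_A\!\left(\eta_{\alpha'\tau'}(\widetilde\Pi + \tau' Z)\right).
\]
Because the left-hand side lies in $[0,1]$, the bounded convergence theorem upgrades this to convergence of expectations.

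To conclude, combine the preceding display with the pointwise majorization
\[
\frac{\#\{i\le p:|\widehat\beta_i|>A\}}{p} \ \le \ \frac{1}{p}\sum_{i=1}^{2p}\psi_A(\widehat\beta_i),
\]
and the analogous bound with $\widehat\beta_{p+i}$ in place of $\widehat\beta_i$, to obtain
\[
\limsup_{p\to\infty}\E\,\frac{\#\{i\le p:|\widehat\beta_i|>A\}}{p} \ \le \ 2\,\E\,\psi_A\!\left(\eta_{\alpha'\tau'}(\widetilde\Pi+\tau' Z)\right),
\]
together with the analogous estimate for $\widehat\beta_{p+i}$. Since $\psi_A(x)\to 0$ pointwise as $A\to\infty$, $\eta_{\alpha'\tau'}(\widetilde\Pi+\tau' Z)$ is a.s.\ finite, and $\psi_A$ is uniformly bounded by $1$, dominated convergence sends $\E\,\psi_A(\eta_{\alpha'\tau'}(\widetilde\Pi+\tau' Z))\to 0$. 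Combined with the easy signal bound, this closes the proof. The main obstacle is avoiding circularity: a seemingly natural shortcut is to take $\psi(x)=x^2$ together with the KKT bound $\|\widehat\beta\|_1\le \|Y\|^2/(2\lambda)$, but controlling $\E\|Y\|^4$ would require a fourth moment of $\Pi$ that we have not assumed; the bounded bump $\psi_A$ together with bounded convergence sidesteps this issue entirely.
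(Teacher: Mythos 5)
Your proof is correct, and while it follows the same broad outline as the paper's proof (union bound over the three coordinates, then invoke Bayati--Montanari for the two Lasso coordinates and an elementary argument for $\beta_i$), it is actually \emph{more} careful than the paper's argument in a way that matters.

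The paper's proof asserts that, directly from \cite{bayati2012,su2017false}, one has the per-group convergences
\[
\frac{\#\{1 \le i \le p: |\widehat\beta_i| > A\}}{p} \Longrightarrow \P(|\eta_{\alpha'\tau'}(\Pi + \tau' Z)| > A), \qquad
\frac{\#\{p+1 \le i \le 2p: |\widehat\beta_i| > A\}}{p} \Longrightarrow \P(|\eta_{\alpha'\tau'}(\tau' Z)| > A).
\]
But those references only give the convergence of the empirical distribution over \emph{all} $2p$ coordinates of the augmented system (or, in \cite{su2017false}, over null vs.\ nonnull splits --- which does not separate original nulls from knockoff coordinates). Separating the first $p$ coordinates from the last $p$ is exactly what Lemma~\ref{lm:part_amp} and Lemma~\ref{lm:p_part} are designed to do, and those lemmas already cite Lemma~\ref{lm:boundedbeta} in their proofs, so invoking them here would be circular. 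You flag this pitfall explicitly and sidestep it with the key inequality $\#\{i\le p: |\widehat\beta_i|>A\} \le \sum_{i=1}^{2p}\psi_A(\widehat\beta_i)$, which lets the full-system Theorem~1.5 deliver an upper bound that tends to zero as $A\to\infty$; the upper bound is all that the lemma requires. This is the right way to make the argument non-circular.

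You also handle two additional technical points more cleanly than the paper: (i) you replace the indicator $\mathbf{1}\{|x|>A\}$ with a bounded Lipschitz bump $\psi_A$ before invoking Theorem~1.5, because indicators are not pseudo-Lipschitz and cannot be fed to Bayati--Montanari directly; the paper elides this smoothing step. (ii) You upgrade convergence in probability to convergence in expectation via bounded convergence (the averages live in $[0,2]$), which is needed since the lemma's statement is about expectations; the paper invokes dominated convergence for the same purpose. The only inessential difference is the treatment of the $\beta_i$ term, where you use a Markov/second-moment bound and the paper uses the weak law of large numbers; both give $\P(|\Pi|>A)\to 0$ since $\E\Pi^2<\infty$.
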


\begin{proof}[Proof of Lemma \ref{lm:boundedbeta}]
Note that we have
\[
\begin{aligned}
&\#\{1 \le i \le p: \max(|\widehat\beta_{i,\gamma}|, |\beta_i|, |\widehat\beta_{p+i,r}|) > A\} \\
&\le \#\{1 \le i \le p: |\widehat\beta_{i,\gamma}| > A\} + \#\{1 \le i \le p: |\beta_i| > A\} + \#\{p+1 \le i \le 2p: |\widehat\beta_{i,\gamma}| > A\}.
\end{aligned}
\]
It follows from \cite{weng2018overcoming} that
\[
\frac{\#\{1 \le i \le p: |\widehat\beta_{i,\gamma}| > A\}}{p} \Longrightarrow \P(|\eta_{\alpha'\tau'^{2-\gamma}, \gamma}(\Pi + \tau' Z)| > A),
\]
which tends to $0$ as $A \goto \infty$. Second, 
\[
\frac{\#\{1 \le i \le p: |\beta_i| > A\}}{p} \Longrightarrow \P(|\Pi| > A),
\]
and third, we obtain
\[
\frac{\#\{p+1 \le i \le 2p: |\widehat\beta_{i,\gamma}| > A\}}{p} \Longrightarrow \P(|\tau'\eta_{\alpha', \gamma}(Z)| > A).
\]
Last, note that these fractions are all bounded, so Lebesgue's dominated convergence theorem can be applied here.

\end{proof}

Now we turn to the proof of Theorem \ref{thm:contrast_gen}.
\begin{proof}[Proof of Theorem \ref{thm:contrast_gen}]
Denote by $M$ an upper bound of $f$ in absolute value and let $R > 0$ be a number that will later tend to infinity. It is easy to see that we can construct a continuous function $\tilde f$ defined on $\R^3$ such that (1) $f(x) \equiv \tilde f(x)$ on $B_R \equiv \{x \in \R^3: \|x\|_2 \le R\}$, (2) $|\tilde f(x)| \le M$ for all $x$, and (3) $\lim_{\|x\| \goto \infty} \tilde f(x)$ exists. This can be done, for example, by letting
\[
\tilde f(x) =
\begin{cases}
f(x), \quad \text{if } \|x\|_2 \le R\\
f\left( \frac{Rx}{\|x\|_2} \right) \mathrm{e}^{-\|x\|_2+ R}, \quad \text{otherwise.}
\end{cases}
\]
From the three properties of $\tilde f$, it is easy to see that this is a continuous function on the product of two compact Hausdorff spaces, $\R^2 \cup \{\infty\}$ and $\R \cup \{\infty\}$. From Lemma \ref{lm:sw_app}, therefore, we know that there exist continuous functions $g_1, \ldots, g_m$ on $\R^2 \cup \{\infty\}$ and $h_1, \ldots, h_m$ on $\R \cup \{\infty\}$ such that
\begin{equation}\label{eq:close_stone}
\sup \left|  \tilde f(x_1, x_2, x_3) -
  \sum_{l=1}^m  g_l(x_1, x_2)  h_l(x_3) \right| \le \upsilon
\end{equation}
for any small constant $\upsilon > 0$. 

Since $g_l$ and $h_l$ are continuous on the compactification of their domains for each $l$, the two functions must be continuous and bounded on $\R^2$ and $\R$, respectively. Thus, we get
\begin{equation}\nonumber
\frac1{p}\sum_{i=1}^p   g_l(\widehat\beta_{i,\gamma}, \beta_i)  
h_l(\widehat\beta_{p+i, \gamma}) \dgoto \E \left[  g_l(\eta_{\alpha'\tau'^{2-\gamma}, \gamma}(\Pi + \tau' Z), \Pi) \right] \E \left[   h_l(\tau' \eta_{\alpha', \gamma}(Z')) \right]
\end{equation}
by Lemma \ref{lm:factored}, where $Z$ and $Z'$ are i.i.d.~standard normal random variables. This yields
\begin{equation}\label{eq:g_conv_my}
\begin{aligned}
\frac1{p}\sum_{i=1}^p \sum_{l=1}^m   g_l(\widehat\beta_{i,\gamma},
\beta_i)   h_l(\widehat\beta_{p+i,r}) & \dgoto \sum_{l=1}^m \E \left[   g_l(\eta_{\alpha'\tau'^{2-\gamma}, \gamma}(\Pi + \tau' Z), \Pi) \right] \E \left[   h_l(\tau'\eta_{\alpha', \gamma}(Z')) \right]\\
& =  \E \left[ \sum_{l=1}^m   g_l(\eta_{\alpha'\tau'^{2-\gamma}, \gamma}(\Pi + \tau' Z), \Pi)   h_l(\tau' \eta_{\alpha', \gamma}(Z')) \right].
\end{aligned}
\end{equation}
Taken together, \eqref{eq:close_stone} and \eqref{eq:g_conv_my} give
\begin{equation}\label{eq:f_tilde_bb}
\begin{aligned}
&\P\left( \left| \frac1{p}\sum_{i=1}^p \tilde f(\widehat\beta_{i,\gamma}, \beta_i, \widehat\beta_{p+i, \gamma}) - \E \tilde f(\eta_{\alpha'\tau'^{2-\gamma}, \gamma}(\Pi + \tau' Z), \Pi, \tau'\eta_{\alpha', \gamma}(Z')) \right| < 3\upsilon\right)\\
&\ge \P\left( \left| \frac1{p}\sum_{i=1}^p \sum_{l=1}^m   g_l(\widehat\beta_{i,\gamma},
\beta_i)   h_l(\widehat\beta_{p+i, \gamma}) - \E \left[ \sum_{l=1}^m   g_l(\eta_{\alpha'\tau'^{2-\gamma}, \gamma}(\Pi + \tau' Z), \Pi)   h_l(\tau'\eta_{\alpha', \gamma}(Z')) \right] \right| < \upsilon\right)\\
& \rightarrow 1
\end{aligned}
\end{equation}
as $p \goto \infty$.

Next, we consider
\begin{equation}\label{eq:diff_1_exp}
\begin{aligned}
\frac1{p}\sum_{i=1}^p f(\widehat\beta_{i,\gamma}, \beta_i, \widehat\beta_{p+i, \gamma}) - \frac1{p}\sum_{i=1}^p \tilde f(\widehat\beta_{i,\gamma}, \beta_i, \widehat\beta_{p+i, \gamma})
\end{aligned}
\end{equation}
and
\begin{equation}\label{eq:diff_2_exp}
\begin{aligned}
\E f(\eta_{\alpha'\tau'^{2-\gamma}, \gamma}(\Pi + \tau' Z), \Pi, \tau'\eta_{\alpha', \gamma}(Z')) - \E \tilde f(\eta_{\alpha'\tau'^{2-\gamma}, \gamma}(\Pi + \tau' Z), \Pi, \tau'\eta_{\alpha', \gamma}(Z')).
\end{aligned}
\end{equation}
Our aim is to show that both displays are small. For the first display, note that
\[
\left| f(\widehat\beta_{i,\gamma}, \beta_i,\widehat\beta_{p+i,r}) - \tilde f(\widehat\beta_{i,\gamma}, \beta_i,\widehat\beta_{p+i,r}) \right| 
 \le 2M \bm{1}_{\|(\widehat\beta_{i,\gamma}, \beta_i,\widehat\beta_{p+i,r})\|_2 > R}
\le 2M \bm{1}_{\max(|\widehat\beta_{i,\gamma}|, |\beta_i|,|\widehat\beta_{p+i,r}|) > R/\sqrt{3}}.
\]
Taking $A = R/\sqrt{3}$, we obtain
\begin{equation}\label{eq:a_r_3}
\left| \frac1{p}\sum_{i=1}^p \left[ f(\widehat\beta_{i,\gamma}, \beta_i,
  \widehat\beta_{p+i,r}) - \tilde f(\widehat\beta_{i,\gamma}, \beta_i,
  \widehat\beta_{p+i,r}) \right]\right| \le \frac{2M \#\{1 \le i \le p: \max(|\widehat\beta_{i,\gamma}|, |\beta_i|,
  |\widehat\beta_{p+i,r}|) > A\}}{p}.
\end{equation}
Likewise, we show below that \eqref{eq:diff_2_exp} can be made arbitrarily small in absolute value. To this end, note that
\begin{equation}\label{eq:r_3_two}
\begin{aligned}
&\left| \E \left[ f(\eta_{\alpha'\tau'^{2-\gamma}, \gamma}(\Pi + \tau' Z), \Pi,
    \tau'\eta_{\alpha', \gamma}(Z')) - \tilde f(\eta_{\alpha'\tau'^{2-\gamma}, \gamma}(\Pi + \tau' Z), \Pi,
    \tau'\eta_{\alpha', \gamma}(Z'))\right] \right| \\
&\le \E \left| f(\eta_{\alpha'\tau'^{2-\gamma}, \gamma}(\Pi + \tau' Z), \Pi,
    \tau'\eta_{\alpha', \gamma}(Z')) - \tilde f(\eta_{\alpha'\tau'^{2-\gamma}, \gamma}(\Pi + \tau' Z), \Pi,
    \tau'\eta_{\alpha', \gamma}(Z'))\right| \\
&\le 2M \P\left( \max(|\eta_{\alpha'\tau'^{2-\gamma}, \gamma}(\Pi + \tau' Z)|, |\Pi|,
    |\tau' \eta_{\alpha', \gamma}(Z')|) > A \right).
\end{aligned}
\end{equation}


Finally, from \eqref{eq:f_tilde_bb}, \eqref{eq:a_r_3}, and \eqref{eq:r_3_two} it follows that the event 
\[
\begin{aligned}
 &\left| \frac1{p}\sum_{i=1}^p f(\widehat\beta_{i,\gamma}, \beta_i, \widehat\beta_{p+i,r}) - \E f(\eta_{\alpha'\tau'^{2-\gamma}, \gamma}(\Pi + \tau' Z), \Pi, \tau'\eta_{\alpha', \gamma}(Z')) \right| \\
&< 3\upsilon + \frac{2M \#\{1 \le i \le p: \max(|\widehat\beta_{i,\gamma}|, |\beta_i|,
  |\widehat\beta_{p+i,r}|) > A\}}{p} \\
&+ 2M \P\left( \max(|\eta_{\alpha'\tau'^{2-\gamma}, \gamma}(\Pi + \tau' Z)|, |\Pi|,
    |\tau'\eta_{\alpha', \gamma}(Z')|) > A \right)
\end{aligned}
\]
happens with probability tending to one as $p \goto \infty$. Taking $A \equiv R/\sqrt{3} \goto \infty$ followed by letting $\upsilon \goto 0$, Lemma~\ref{lm:boundedbeta} shows that
\[
3\upsilon + \frac{2M \#\{1 \le i \le p: \max(|\widehat\beta_{i,\gamma}|, |\beta_i|,
  |\widehat\beta_{p+i,r}|) > A\}}{p} + 2M \P\left( \max(|\eta_{\alpha'\tau'^{2-\gamma}, \gamma}(\Pi + \tau' Z)|, |\Pi|,
    |\tau'\eta_{\alpha', \gamma}(Z')|) > A \right)
\] 
can be made arbitrarily small. This reveals that
\[
\frac1{p}\sum_{i=1}^p f(\widehat\beta_{i,\gamma}, \beta_i, \widehat\beta_{p+i,r}) \dgoto \E \left[ f(\eta_{\alpha'\tau'^{2-\gamma}, \gamma}(\Pi + \tau' Z), \Pi, \tau'\eta_{\alpha', \gamma}(Z')) \right],
\]
thereby completing the proof.

\end{proof}



Next, we discuss how Corollary~\ref{cor:tpp-fdp-inf} and Corollary~\ref{cor:tinf} can be derived from Theorem \ref{thm:contrast_gen}.

\begin{proof}[Proof of Corollaries~\ref{cor:tpp-fdp-inf} and \ref{cor:tinf}]
Define 
\[
f_a(x, y, z) = \max\left\{0, 1 + \min\left\{ 0, \frac{|x| - |z| - t}{a} \right\} \right\} \cdot \max\{0, 1 - |y|/a\}
\]
for $a > 0$. As is clear, $f_a$ is bounded and continuous. Therefore, by Theorem~\ref{thm:contrast_gen} we get
\[
\frac1{p}\sum_{i=1}^p f_a(\widehat\beta_{i,\gamma}, \beta_i, \widehat\beta_{p+i, \gamma}) \dgoto \E f_a(\eta_{\alpha'\tau'^{2-\gamma}, \gamma}(\Pi + \tau' Z), \Pi, \tau'\eta_{\alpha', \gamma}(Z'))
\]
as $p \goto \infty$. On the one hand, by the same argument for \eqref{eq:a_small_zero}, we obtain
\begin{equation}\label{eq:useful_cor}
\begin{aligned}
&\lim_{a \goto 0} \E f_a(\eta_{\alpha'\tau'^{2-\gamma}, \gamma}(\Pi + \tau' Z), \Pi, \tau'\eta_{\alpha', \gamma}(Z')) \\
&= \P(\beta_i = 0) \P ( |\tau' \eta_{\alpha', \gamma}(Z)| - |\tau' \eta_{\alpha', \gamma}(Z')| \ge t ) \\
&= (1- \epsilon) \P ( |\tau' \eta_{\alpha', \gamma}(Z)| - |\tau' \eta_{\alpha', \gamma}(Z')| \ge t ).
\end{aligned}
\end{equation}

On the other hand, note that the number of false discoveries at threshold value $t$ is
\[
\sum_{i=1}^p \bm{1}_{\beta_i = 0,  |\widehat\beta_{i,\gamma}| - |\widehat\beta_{p+i, \gamma}| \ge t}.
\]
It is easy to see that
\[
f_a(\widehat\beta_{i,\gamma}, \beta_i, \widehat\beta_{p+i, \gamma}) - \bm{1}_{0 < |\beta_i| < a} - \bm{1}_{t - a< |\widehat\beta_{i,\gamma}| - |\widehat\beta_{p+i, \gamma}| < t} \le \bm{1}_{\beta_i = 0,  |\widehat\beta_{i,\gamma}| - |\widehat\beta_{p+i, \gamma}| \ge t} \le f_a(\widehat\beta_{i,\gamma}, \beta_i, \widehat\beta_{p+i, \gamma}).
\]
Thus, we have
\[
\left| \frac1p \sum_{i=1}^p \bm{1}_{\beta_i = 0,  |\widehat\beta_{i,\gamma}| - |\widehat\beta_{p+i, \gamma}| \ge t} - \frac1{p}\sum_{i=1}^p f_a(\widehat\beta_{i,\gamma}, \beta_i, \widehat\beta_{p+i, \gamma}) \right| \le \frac1p \sum_{i=1}^p \left[ \bm{1}_{0 < |\beta_i| < a} + \bm{1}_{t - a< |\widehat\beta_{i,\gamma}| - |\widehat\beta_{p+i, \gamma}| < t} \right].
\]
Using Theorem~\ref{thm:contrast_gen} with appropriate bounded continuous functions, we can show that
\[
\frac{\sum_{i=1}^p \left[ \bm{1}_{0 < |\beta_i| < a} + \bm{1}_{t - a< |\widehat\beta_{i,\gamma}| - |\widehat\beta_{p+i, \gamma}| < t} \right] }{p} \le c_a
\]
in probability for a constant $c_a > 0$ satisfying $c_a \goto 0$ as $a \goto 0$. Together with \eqref{eq:useful_cor}, this gives
\[
\frac1p \sum_{i=1}^p \bm{1}_{\beta_i = 0,  |\widehat\beta_{i,\gamma}| - |\widehat\beta_{p+i, \gamma}| \ge t} \dgoto (1- \epsilon) \P ( |\tau' \eta_{\alpha', \gamma}(Z)| - |\tau' \eta_{\alpha', \gamma}(Z')| \ge t )
\]
as $p \goto \infty$. Similarly, one can show that
\[
\frac1p \sum_{i=1}^p \bm{1}_{|\widehat\beta_{i,\gamma}| - |\widehat\beta_{p+i, \gamma}| \ge t} \dgoto \P ( |\eta_{\alpha'\tau'^{2-\gamma}, \gamma}(\Pi + \tau' Z)| - |\tau' \eta_{\alpha', \gamma}(Z')| \ge t ).
\]
This proves the first identity in Corollary~\ref{cor:tpp-fdp-inf}. The second identify of Corollary~\ref{cor:tpp-fdp-inf} and Corollary~\ref{cor:tinf} can be proved similarly.

\end{proof}

The remaining part of this subsection is devoted to showing that Theorem~\ref{thm:contrast_gen} holds uniformly over all $\lambda$ in a compact interval of $(0, \infty)$ when $\gamma = 1$. As with the proof of Lemma \ref{lm:part_amp}, we can assume that $f$ is bounded and $L$-Lipschitz continuous. The uniformity extension is accomplished largely by using Lemma B.2 from
\cite{su2017false} (see also \cite{wang2020price}).
\begin{lemma}[Lemma B.2 in \cite{su2017false}]\label{lm:amp_uni}
Fix $0 < \lambda_{\min} < \lambda_{\max}$. Then, there exists a constant
$c$ such that for any $[\lambda^- , \lambda^+] \subset
[\lambda_{\min}, \lambda_{\max}]$, the Lasso estimates satisfy
\[
\sup_{\lambda^- \le \lambda \le \lambda^+} \left\|
  \widehat\beta(\lambda) - \widehat\beta(\lambda^-)\right\|_2 \le c
\sqrt{(\lambda^+ - \lambda^-)p}
\]
with probability tending to one.
\end{lemma}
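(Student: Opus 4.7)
The plan combines three ingredients: (i) a two-point convexity inequality relating Lasso minimizers at two tuning parameters, (ii) AMP-based control of $\|\widehat\beta(\lambda)\|_1$ uniformly over $[\lambda_{\min},\lambda_{\max}]$, and (iii) a restricted-eigenvalue-type estimate for the Gaussian augmented design $[\bX,\ko]$. Because the right-hand side of the claim depends only on the endpoints $\lambda^\pm$, once a pointwise bound in $\lambda$ is obtained with a constant uniform in $\lambda \in [\lambda^-, \lambda^+]$, the supremum-in-$\lambda$ statement follows immediately.

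First I would establish the basic deterministic estimate. For $\lambda_1 < \lambda_2$, comparing the optimality of $\widehat\beta(\lambda_1)$ and $\widehat\beta(\lambda_2)$ in their respective penalized objectives $L_{\lambda_i}(b) = \tfrac12\|Y - [\bX,\ko]b\|_2^2 + \lambda_i\|b\|_1$ and using that the squared-loss part is strongly convex in the direction of $[\bX,\ko]b$, summation of the two resulting inequalities yields
\[
\|[\bX,\ko](\widehat\beta(\lambda_2) - \widehat\beta(\lambda_1))\|_2^2 \le (\lambda_2 - \lambda_1)\bigl(\|\widehat\beta(\lambda_1)\|_1 - \|\widehat\beta(\lambda_2)\|_1\bigr).
\]
This incidentally shows that $\lambda \mapsto \|\widehat\beta(\lambda)\|_1$ is nonincreasing.

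Next I would use AMP to bound the $\ell_1$-norm uniformly. Applying Theorem~\ref{thm:contrast} (or directly Lemma~\ref{lm:part_amp}) with a truncated Lipschitz approximation of $|\cdot|$ and invoking Lemma~\ref{lm:boundedbeta} to control the truncation error shows that for each fixed $\lambda$, $\|\widehat\beta(\lambda)\|_1/p$ converges in probability to a continuous function of $\lambda$. Combined with the monotonicity from the previous step, this yields a deterministic $C>0$ with $\|\widehat\beta(\lambda^-)\|_1 \le Cp$ uniformly over $\lambda^- \in [\lambda_{\min},\lambda_{\max}]$ with probability going to one. Plugging this in and taking the sup in $\lambda$ gives
\[
\sup_{\lambda^- \le \lambda \le \lambda^+} \|[\bX,\ko](\widehat\beta(\lambda) - \widehat\beta(\lambda^-))\|_2^2 \le C(\lambda^+ - \lambda^-)p
\]
with probability tending to one.

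The final and hardest step is to pass from the bound on $\|[\bX,\ko]\Delta\|_2$ to one on $\|\Delta\|_2$, where $\Delta := \widehat\beta(\lambda) - \widehat\beta(\lambda^-)$. Each Lasso optimum on the augmented design is almost surely unique with support of size at most $n$, so $\Delta$ is supported on at most $2n$ of the $2p$ coordinates; however, a naive restricted-minimum-singular-value argument cannot be used at this sparsity level, since an $n \times 2n$ Gaussian submatrix is rank-deficient and has vanishing least singular value. The way through is to exploit Lasso-specific structure: the KKT conditions at $\lambda^-$ and $\lambda$ force $\Delta$ to lie in a cone determined by the signs of its coordinates on the active sets, and for i.i.d.\ Gaussian $[\bX,\ko]$ one has a restricted-eigenvalue (compatibility) inequality $\|[\bX,\ko]\Delta\|_2 \ge \mu\|\Delta\|_2$ on such cones with probability tending to one. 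Combining with the previous display gives the claim with $c = \sqrt{C}/\mu$. This cone-based restricted-eigenvalue step is the main technical obstacle and is precisely why the bound is isolated as its own lemma in \cite{su2017false}.
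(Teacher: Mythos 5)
This lemma is \emph{cited} from \cite{su2017false} (it is their Lemma~B.2, applied to the $n\times 2p$ augmented design), so the present paper does not supply its own proof; you are effectively re-deriving a cited result. Your first two steps are sound: the two-point strong-convexity inequality $\|[\bX,\ko]\Delta\|_2^2 \le (\lambda^+-\lambda^-)\bigl(\|\widehat\beta(\lambda^-)\|_1 - \|\widehat\beta(\lambda^+)\|_1\bigr)$ is correct and also yields monotonicity of $\lambda\mapsto\|\widehat\beta(\lambda)\|_1$, and using a pointwise AMP result at $\lambda_{\min}$ plus that monotonicity to get $\|\widehat\beta(\lambda)\|_1 \le Cp$ uniformly w.h.p.\ is a clean way to avoid the circularity (the \emph{uniform}-in-$\lambda$ AMP statements in this paper are themselves downstream of the present lemma). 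You also correctly flag that the third step---passing from $\|[\bX,\ko]\Delta\|_2$ to $\|\Delta\|_2$---is the whole difficulty.

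However, that third step is where the argument breaks, and the break is substantive. You propose a restricted-eigenvalue inequality over ``a cone determined by the signs of its coordinates on the active sets,'' but you neither define this cone nor explain why $\Delta$ must lie in it, and as stated the argument cannot work: $\Delta$ is supported on $T = \mathrm{supp}(\widehat\beta(\lambda^-))\cup\mathrm{supp}(\widehat\beta(\lambda))$, and $|T|$ can be as large as $2(1-\kappa)n$, which for small $\kappa$ exceeds $n$. In that regime the $n\times|T|$ Gaussian submatrix has a nontrivial kernel, and a sign constraint on the coordinates of $\Delta$ over $T$ does not exclude kernel directions (every orthant of $\R^{|T|}$ intersects a subspace of dimension $|T|-n>0$). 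The KKT identity $A^{\top}A\Delta=\lambda_1 s_1-\lambda_2 s_2$ constrains $A^{\top}A\Delta$ but does not in any evident way constrain the component of $\Delta$ in $\ker(A_T)$, so there is no compatibility-type inequality over the cone you describe. What the actual argument in \cite{su2017false} uses instead is the piecewise-linear structure of the Lasso path together with their Lemma~B.1, which gives a uniform-in-$\lambda$ high-probability bound $|\mathrm{supp}(\widehat\beta(\lambda))|\le(1-\kappa)n$; within each linear segment the active set is \emph{fixed} and of size below $n$, so the smallest singular value of the relevant submatrix is bounded away from zero, the derivative $\d\widehat\beta_S/\d\lambda = -(A_S^{\top}A_S)^{-1}s_S$ has $\ell_2$-norm $O(\sqrt{n})$, and summing over segments gives a total-variation bound that implies the stated estimate. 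Your proposal skips this piecewise/segment-by-segment reduction, which is precisely what circumvents the $|T|>n$ obstruction, so as written the proof has a genuine gap.
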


\begin{proof}[Proof of Proposition~\ref{thm:contrast}]

To begin to establish the uniformity in $\lambda$, let $\lambda_{\min} = \lambda_0 < \lambda_1 < \cdots <
\lambda_m =  \lambda_{\max}$ be equally spaced points and set
  $\Delta \equiv \lambda_{l+1} - \lambda_l = (\lambda_{\max} -
  \lambda_{\min})/m$; We will later take $m \rightarrow \infty$. Write
\[
\ef(\lambda) = \E f(\eta_{\alpha'\tau'}(\Pi + \tau' Z), \Pi, \eta_{\alpha'\tau'}(\tau' Z')).
\]

It follows from Theorem~\ref{thm:contrast_gen} that
\begin{equation}\label{eq:m_union}
\max_{0 \le l \le m} \left| \frac1{p}\sum_{i=1}^p
  f(\widehat\beta_i(\lambda_l), \beta_i, \widehat\beta_{p+i}(\lambda_l)) -
\ef(\lambda_l) \right| \Longrightarrow 0.
\end{equation}
Now, according to Corollary 1.7 from \cite{bayati2012}, both $\alpha', \tau'$ are continuous in
$\lambda$ and, therefore, $\ef(\lambda)$ is also continuous on
$[\lambda_{\min}, \lambda_{\max}]$ . For any constant
$\omega > 0$, therefore, the uniform continuity of $\ef$ ensures that
\begin{equation}\label{eq:rho_contin}
|\ef(\lambda) - \ef(\lambda')| \le \omega
\end{equation}
holds for all
$\lambda_{\min} \le \lambda, \lambda' \le \lambda_{\max}$ satisfying
$|\lambda - \lambda'| \le \Delta$ if $m$ is sufficiently large. Now we
consider
\[
\begin{aligned}
&\left| \frac1{p}\sum_{i=1}^p f(\widehat\beta_i(\lambda), \beta_i,
\widehat\beta_{p+i}(\lambda)) - \frac1{p}\sum_{i=1}^p
  f(\widehat\beta_i(\lambda'), \beta_i, \widehat\beta_{p+i}(\lambda')) \right|\\
&\le \frac1{p}\sum_{i=1}^p \left|  f(\widehat\beta_i(\lambda), \beta_i,
\widehat\beta_{p+i}(\lambda)) - f(\widehat\beta_i(\lambda'), \beta_i, \widehat\beta_{p+i}(\lambda')) \right|\\
&\le \frac1{p}\sum_{i=1}^p L \sqrt{(\widehat\beta_i(\lambda) -
  \widehat\beta_i(\lambda'))^2 + (\beta_i - \beta_i)^2 +
  (\widehat\beta_{p+i}(\lambda) - \widehat\beta_{p+i}(\lambda'))^2}\\
&\le \frac1{p}\sum_{i=1}^p \left( L \left|\widehat\beta_i(\lambda) -
  \widehat\beta_i(\lambda') \right| + L
\left|\widehat\beta_{p+i}(\lambda) - \widehat\beta_{p+i}(\lambda')
\right| \right)\\
&= \frac{L}p \left\| \widehat\beta(\lambda) - \widehat\beta(\lambda')\right\|_1\\
&\le \frac{L}p \sqrt{2p}\left\| \widehat\beta(\lambda) -
  \widehat\beta(\lambda')\right\|_2\\
&= \frac{\sqrt{2} L}{\sqrt{p}} \left\| \widehat\beta(\lambda) - \widehat\beta(\lambda')\right\|_2.
\end{aligned}
\]
Taking $\lambda' = \lambda_l$ for some $l = 0, 1, \ldots, m - 1$ and
$\lambda_l < \lambda \le \lambda_{l+1}$, Lemma \ref{lm:amp_uni} ensures that
\[
\begin{aligned}
&\sup_{\lambda_l \le \lambda \le \lambda_{l+1}}\left| \frac1{p}\sum_{i=1}^p f(\widehat\beta_i(\lambda), \beta_i,
\widehat\beta_{p+i}(\lambda)) - \frac1{p}\sum_{i=1}^p
f(\widehat\beta_i(\lambda_l), \beta_i, \widehat\beta_{p+i}(\lambda_l))
\right| \\
&\le \sup_{\lambda_l \le \lambda \le \lambda_{l+1}}\frac{\sqrt{2} L}{\sqrt{p}} \left\| \widehat\beta(\lambda) - \widehat\beta(\lambda_l)\right\|_2\\
&\le \sup_{\lambda_l \le \lambda \le \lambda_{l+1}}\frac{\sqrt{2}
  L}{\sqrt{p}} c \sqrt{(\lambda - \lambda_l)p}\\
& = \sqrt{2} Lc \sqrt{\frac{\lambda_{\max} - \lambda_{\min}}{m}}\\
& = O(1/\sqrt{m})
\end{aligned}
\]
with probability tending to one. Taking a union bound, we get
\begin{equation}\label{eq:gap_small}
\max_{0 \le l \le m}\sup_{\lambda_l \le \lambda \le \lambda_{l+1}}\left| \frac1{p}\sum_{i=1}^p f(\widehat\beta_i(\lambda), \beta_i,
\widehat\beta_{p+i}(\lambda)) - \frac1{p}\sum_{i=1}^p
f(\widehat\beta_i(\lambda_l), \beta_i, \widehat\beta_{p+i}(\lambda_l))
\right| = O(1/\sqrt{m})
\end{equation}
with probability tending to one as $p \goto \infty$. 

Now, for any $\lambda \in [\lambda_{\min}, \lambda_{\max}]$, choose
$l$ such that $\lambda_l \le \lambda < \lambda_{l+1}$ (set
$\lambda_{m+1} = \lambda_{\max} + \Delta$). Then from
\eqref{eq:m_union}, \eqref{eq:rho_contin}, and \eqref{eq:gap_small} we obtain
\begin{multline*}
\left| \frac1{p}\sum_{i=1}^p f(\widehat\beta_i(\lambda), \beta,
  \widehat\beta_{p+i}(\lambda)) - \ef(\lambda) \right|\\ \le \left|  \frac1{p}\sum_{i=1}^p f(\widehat\beta_i(\lambda), \beta,
  \widehat\beta_{p+i}(\lambda)) - \frac1{p}\sum_{i=1}^p
  f(\widehat\beta_i(\lambda_l), \beta, \widehat\beta_{p+i}(\lambda_l))
\right| \\
+ \left| \frac1{p}\sum_{i=1}^p f(\widehat\beta_i(\lambda_l), \beta,
  \widehat\beta_{p+i}(\lambda_l)) - \ef(\lambda_l) \right| +
|\ef(\lambda_l) - \ef(\lambda)|\\
\le O(1/\sqrt{m}) + \left| \frac1{p}\sum_{i=1}^p f(\widehat\beta_i(\lambda_l), \beta,
  \widehat\beta_{p+i}(\lambda_l)) - \ef(\lambda_l) \right| + \omega
\end{multline*}
holds uniformly for all $\lambda \in [\lambda_{\min}, \lambda_{\max}]$
with probability tending to one. Taking $m \goto \infty$, which allows us to set $\omega \goto 0$, gives
\[
\sup_{\lambda_{\min} \le \lambda \le \lambda_{\max}} \left|
  \frac1{p}\sum_{i=1}^p f(\widehat\beta_i(\lambda), \beta,
  \widehat\beta_{p+i}(\lambda)) - \ef(\lambda) \right| \Longrightarrow 0
\]
as $p \goto \infty$.

\end{proof}

\subsection{Proof of Theorem~\ref{thm:break}}
\label{sec:proof-crefthm:break}

The proof of Theorem~\ref{thm:break} presented here applies more generally to the Model-X knockoffs procedure that uses, instead of the LCD statistic, any other statistic of the form $W_j(\lambda) = w(\widehat{\beta}_j(\lambda), \widehat{\beta}_{j+p}(\lambda))$, where the link function $w$ satisfies $w(u, v) = -w(v, u)$ and $w(x, c) \goto \infty$ as $|x| \goto \infty$ for any fixed $c$; we call such $w$ function {\it faithful} in what follows. 
From \cite{su2017false} we know that Lasso cannot obtain full power unless \eqref{eq:transition} holds, hence we consider only the case $\epsilon < 2\epsilon^*(\delta/2)$. 
For any such $\epsilon$, it can be shown that the expressions in Equations \eqref{eq:tinfty} and \eqref{eq:overestimate} converge to $(1 - \epsilon)/(1 + \epsilon)$ when $t\to 0$ and $\Pi_m$ is growing as in the assumption. 
We consider first the case $q<(1 - \epsilon)/(1 + \epsilon)$. 

\smallskip
Let $\that > 0$ be the unique value of $t$ satisfying
\[
\widehat{\fdpinfty}^{\textnormal{LCD}}(t) = \frac{\P(w(\eta_{\alpha'\tau'}(\Pi + \tau' Z), \tau'\eta_{\alpha'}(Z')) \le -t)}{\P(w(\eta_{\alpha'\tau'}(\Pi + \tau' Z), \tau'\eta_{\alpha'}(Z')) \ge t)} = q. 
\]
When the prior distribution is $\Pi_m$, denote by $\alpha'_m, \tau'_m$ the solution to \eqref{eq:system-kf} and let $\that_m$ be defined as above. 
Recognizing the assumption of a growing $\Pi_m$ in Definition~\ref{def:esp-growing}, one can show that $\alpha'_m, \tau'_m$ converge to $\alpha'_{\infty}, \tau'_{\infty}$ which are the solution to
\begin{equation}\nonumber
\begin{aligned}
&\tau^2 = \sigma^2 + \frac{\epsilon \tau^2 (1+\alpha^2)}{\delta} + \frac{2-\epsilon}{\delta} \E \eta_{\alpha\tau}(\tau Z)^2\\
&\lambda = \left[1 - \frac{\epsilon}{\delta}- \frac{2-\epsilon}{\delta}\P(|\tau Z| > \alpha\tau)\right] \alpha\tau.
\end{aligned}
\end{equation}
That is, $\alpha'_m \goto \alpha'_{\infty}$ and $\tau'_m \goto \tau'_{\infty}$ as $m \goto \infty$. As a consequence, $\that_m$ tends to $\that_{\infty}$ as $m \goto \infty$ as well, where the existence of $\that_{\infty}$ is ensured by the fact that $0 < q < \frac{1 - \epsilon}{1 + \epsilon}$. 

Following the proof of Lemma A.1 in \cite{su2017false}, we can show that $\tpp(\lambda, \Pi_m, q)$ converges to
\[
\tppinfty(\lambda, \Pi_m, q) \equiv \P(w(\eta_{\alpha'_m \tau'_m}(\Pi_m + \tau'_m Z), \tau'_m \eta_{\alpha_m'}(Z')) \geq \that_m | \Pi_m \neq 0)
\]
in probability uniformly over $\lambda_1 \le \lambda \le \lambda_2$ as $n \goto \infty$, by making use of Theorem~\ref{thm:contrast_gen}. Having demonstrated earlier that $\alpha_m'$ and $\tau'_m$ converge to constants, the faithfulness of $w$ and the growing condition of $\Pi_m$ reveal that
\[
\tppinfty(\lambda, \Pi_m, q)  \goto 1.
\]
Moreover, the convergence of the probability $\tppinfty(\lambda, \Pi_m, q)$ as a smooth function of $\lambda$ to its limit 1 is uniform over $\lambda_1 \le \lambda \le \lambda_2$ as $m \goto \infty$. In particular, we can choose $m'$ such that
\begin{equation}\label{eq:last_hope1}
\inf_{\lambda_1 \le \lambda \le \lambda_2} \tppinfty(\lambda, \Pi_m, q) > 1 - \frac{\nu}{2}
\end{equation}
for all $m \ge m'$. Furthermore, for any $m$ we can find $n'(m)$ such that 
\begin{equation}\label{eq:last_hope2}
\sup_{\lambda_1 \le \lambda \le \lambda_2} \left|\tpp(\lambda, \Pi_m, q) - \tppinfty(\lambda, \Pi_m, q) \right| < \frac{\nu}{2}
\end{equation}
happens with probability at least $1 - \nu$ when $n \ge n'(m)$. Taken together, \eqref{eq:last_hope1} and \eqref{eq:last_hope2} ensure that, with probability at least $1 - \nu$, we have
\[
\inf_{\lambda_1 \le \lambda \le \lambda_2} \tpp(\lambda, \Pi_m, q) > 1 - \nu
\]
for $n \ge n'(m)$ and $m \ge m'$. 
When $q>(1 - \epsilon)/(1 + \epsilon)$, then for any $t>0$, the procedure that selects whenever $|W_j(\lambda)|>t$ asymptotically attains full power with $\fdpinfty(t)<q$,  and the assertion in the theorem holds. 
This concludes the proof.

As an aside, the proof above seamlessly carries over to any bridge-estimator-based knockoffs procedure that uses $W_j(\lambda) = |\widehat{\beta}_{j, \gamma}(\lambda)| - |\widehat{\beta}_{p+j, \gamma}(\lambda)|$~\cite{weng2018overcoming}. 
When the order $\gamma > 1$, in particular, the nominal level $q$ can take any value in $(0, 1)$ since the Donoho--Tanner phase transition does not occur once $\gamma > 1$.


\section{Derivation of the CV-AMP equations} \label{appdx:cv-amp}
Denote the minimum value for $\tau$ by 
\begin{equation*}
\taucv \equiv \min_{\lambda} \tau (\lambda; (K-1)\delta/K),
\end{equation*}
and let $\alphacv$ be the corresponding value for $\alpha$ (so $\alphacv$ is the solution in $\alpha$ to the first equation in \eqref{eq:system-kf} when $\tau$ replaced by $\taucv$). 
Note that we can characterize $(\alphacv, \taucv)$ by requiring that for $0 < t < \taucv$, 
\[
t^2 = \sigma^2 + \frac{K}{(K-1)\delta} \E\left[\eta_{\alpha t}(\Pi + t Z) - \Pi \right]^2 + \frac{K}{(K-1)\delta} \E \eta_{\alpha t}(t Z)^2
\]
does not have a solution in $t$ for $\alpha > \alpha_{\min}$. 
Therefore, on defining 
\[
f(u) \equiv \sigma^2 + \frac{K}{(K-1)\delta} \E\left[\eta_{ u \taucv}(\Pi + \taucv Z) - \Pi \right]^2 + \frac{K}{(K-1)\delta} \E \eta_{ u \taucv}(\taucv Z)^2 - \taucv^2,
\]
we are looking to solve
\begin{equation}\label{eq:alpha_uniq}
\frac{\d f(u)}{\d u} \Big|_{u = \alphacv} = 0.
\end{equation}
It is easy to verify, on the other hand, that
\[
\frac{\d f(u)}{\d u} = \frac{2\taucv^2 K}{(K-1)\delta} \left(\E\left[Z + u; \Pi + \tau Z < -\tau u \right] - \E\left[Z - u; \Pi + \tau Z > \tau u \right] \right) - \frac{4\taucv^2 K}{(K-1)\delta} \left[\phi(u) - u\Phi(-u)\right].
\]
Imposing now \eqref{eq:alpha_uniq}, we get the equation system
\[
\begin{aligned}
&\taucv^2 = \sigma^2 + \frac{K}{(K-1)\delta} \E\left[\eta_{\alphacv\taucv}(\Pi + \taucv Z) - \Pi \right]^2 + \frac{K}{(K-1)\delta} \E \eta_{\alphacv\taucv}(\taucv Z)^2\\
& 
\begin{split}
\frac{2\taucv^2 K}{(K-1)\delta} \left( \E\left[Z +\alphacv; \Pi + \taucv Z < -\taucv\alphacv \right] - \E\left[Z -\alphacv\right. \right. &\left. \left.; \Pi + \taucv Z > \taucv\alphacv \right]\right)\\
&\quad - \frac{4\taucv^2 K}{(K-1)\delta} \left[\phi(\alphacv) - \alphacv\Phi(-\alphacv) \right]=0,
\end{split}
\end{aligned}
\]
which simplifies to \eqref{eq:amp_cv}.

\end{document}